\theoremstyle{plain}
\newtheorem{thm}{Theorem}[subsection]
\newtheorem{lemma}[thm]{Lemma}
\newtheorem{cor}[thm]{Corollary}
\newtheorem{prop}[thm]{Proposition}
\newtheorem{athm}{Theorem A$\!\!\!$}
\newtheorem{alemma}[athm]{Lemma A$\!\!\!$}
\newtheorem{acor}[athm]{Corollary A$\!\!\!$}
\newtheorem{aprop}[athm]{Proposition A$\!\!\!$}
\newtheorem*{theo}{Theorem}
\newtheorem*{p1}{Proposition 4.1}
\newtheorem*{p2}{Proposition 4.2}
\newtheorem*{l3}{Lemma 4.3}
\newtheorem*{t4}{Theorem 4.4}
\theoremstyle{definition}
\newtheorem{defi}[thm]{Definition}
\newtheorem{rmk}[thm]{Remark}
\newtheorem{adefi}[athm]{Definition A$\!\!\!$}
\newtheorem{armk}[athm]{Remark A$\!\!\!$}
\newtheorem*{r5}{Remark 4.5}
\newtheorem*{r6}{Remark 4.6}
\newtheorem*{defn}{Definition}
\theoremstyle{remark}
\begin{document}

\title{Necessary and Sufficient Conditions for Stable Synchronisation in Random Dynamical Systems}
\author{Julian Newman}
\maketitle

\begin{abstract}
\noindent For a product of i.i.d.~random maps or a memoryless stochastic flow on a compact space $X$, we find conditions under which the presence of locally asymptotically stable trajectories (e.g.~as given by negative Lyapunov exponents) implies almost-sure mutual convergence of any given pair of trajectories (``synchronisation''). Namely, we find that synchronisation occurs and is stable if and only if the system exhibits the following properties: (i)~there is a \emph{smallest} deterministic invariant set $K \subset X$, (ii)~any two points in $K$ are capable of being moved closer together, and (iii)~$K$ admits asymptotically stable trajectories. Our first condition (for which unique ergodicity of the one-point transition probabilities is sufficient) replaces the intricate vector field conditions assumed in Baxendale's similar result of 1991, where (working on a compact manifold) sufficient conditions are given for synchronisation to occur in a SDE with negative Lyapunov exponents.
\end{abstract}

\setcounter{section}{-1}

\section{Introduction and Review}

In this extended Introduction, we will introduce the topic of synchronisation in random dynamical systems, and describe the contribution of our present result in the light of existing knowledge. First, we will present the motivating phenomenon of ``noise-induced synchronisation''; we will then introduce our notions of synchronisation and ``stable\footnote{This refers to stability of synchronisation under perturbations of trajectories, \emph{not} persistence of synchronisation under perturbation of parameters in the underlying model (which is most likely a related, but nonetheless different, interesting problem).} synchronisation'' in the context of random dynamical systems; and with this, we will state our result. We will then review other related notions of stability and synchronisation, and some results concerning these, and will finish with a comparison between our result and a similar result of Baxendale in 1991. (An outline of the structure of our proof will be seen within this comparison.)
\\ \\
Subsequent to the above, the rest of the paper will be structured as follows: In Section~1, we will introduce all the essential notions of random dynamical systems and invariance of (deterministic) sets. We specifically work with ``memoryless noise''; for this, it seems that the most logical framework is that of ``filtered random dynamical systems'' (roughly, as in Section~2.3 of [Arn98]). In Section~2, we introduce the main ideas relating to mutual convergence of trajectories. In Section~3, we formulate and prove our main result. In Section~4, a simple example (based on [LeJan87] and [Kai93]) is studied where our theorem can naturally be applied. In the Appendix, we prove some technical results concerning local stability in non-autonomous and random dynamical systems.

\subsection{Noise-Induced Synchronisation and Our Result}

\subsubsection{The phenomenon of ``noise-induced synchronisation''}

There is a long and well-established history to the study of processes whose evolution over time is governed by time-homogeneous driving forces involving the effects of some memoryless stationary noise.\footnote{We use the term ``noise'' loosely, to refer to any random process in time that contributes to the evolution of a system. A noise process is ``memoryless'' if the statistical distribution for the future behaviour of the noise is completely independent of how the noise has behaved up to the present. The terms ``time-homogeneous'' and ``stationary'' essentially mean ``described by a model that is invariant under time-translation'' (i.e.~a model that is not dependent on any reference time). A standard example of continuous-time memoryless stationary noise is \emph{Gaussian white noise}.} (In essence, this is what the theory of ``homogeneous Markov process'' is concerned with.)
\\ \\
However, in more recent decades, there has been a strong interest in investigating the \emph{simultaneous} time-evolution of the states of \emph{two or more} such processes (each sharing the same range of possible states), where these processes start at different initial states but evolve according to exactly the same laws, simultaneously under exposure to the same sources of noise. (Beyond being subjected to the same noise, let us here assume no further form of coupling between the processes.)
\\ \\
A simple type of example would be an array of identical non-interacting one-dimensional self-oscillators, simultaneously subjected to a sequence of sharp impulses occurring as the jumps of a compound Poisson process. (This example is a particular case of the setup considered in [Pik84], where ``noise-induced synchronisation'' seems to have first been discovered. A multidimensional case is also considered in [NANTK05].)
\\ \\
In the study of such systems, one point of particular interest is the curious phenomenon that, given enough time, the states of the different processes \emph{eventually synchronise with one another}. Admittedly, ``coupling-induced synchronisation'' has been known of for hundreds of years (perhaps going back to Christian Huygens' \emph{Horologium Oscillatorium} of 1673, where it was documented that pendulums suspended from a common beam synchronise); but it is intriguing that this rather ``capricious'' coupling---namely, coupling by exposure to a common unpredictable random influence---should (with positive probability) create this kind of ``order'' in the evolutions of the processes. Now we should say, it is no surprise that the processes should synchronise in a situation where, if we removed all noise, the processes would all have naturally settled towards a common stable equilibrium state anyway. Outside of such situations, the phenomenon that we have described is referred to as \emph{synchronisation by noise} or \emph{noise-induced synchronisation}.
\\ \\
(``Synchronisation by noise'' can also be studied in contexts that do not fit our particular framework described above---e.g.~where the noise has memory, as in [KFI12], or where there is some additional coupling between the processes and/or the processes evolve under different laws, as in [LP13] and [LDLK10]. Partial consideration of such cases was also given in [Pik84]. However, all such cases are outside the scope of this present paper.)

\subsubsection{Random dynamical systems}

Now within the framework that we have described above, one may be able to regard the different processes as \emph{different simultaneous trajectories of one noise-dependent flow} on the space of possible states that the individual processes can attain.
\\ \\
Such a noise-dependent flow is referred to as a ``random dynamical system'' (RDS). More specfically: a random dynamical system is a non-deterministic dynamical system whose non-determinism is due to some time-homogeneous dependence upon the realisation of some stationary noise process. (Mathematically precise formulations of this concept can be found in Section~1.1 of [Arn98], or indeed, Section~1.1 of this paper.) We say that the RDS \emph{has independent increments}, or is \emph{memoryless}, if the noise is itself memoryless (or at least, if the statistics of the RDS are as though the noise were memoryless).
\\ \\
Standard types of random dynamical systems include: (i)~the flow of an autonomous SDE driven by a continuous stochastic process with stationary increments (so the noise process is the ``generalised time-derivative'' of this continuous stochastic process); (ii)~the flow of an autonomous ODE interspersed with ``unpredictable random kicks'' (as in [Chuesh02], Example~1.2.2), or along the same lines, the flow of an autonomous SDE driven by a non-Gaussian L\'{e}vy process (as in Chapter~6 of [App04]); (iii)~in the case of \emph{discrete time}, the evolution of the phase space under a sequence of randomly selected self-maps of the phase space (where the ``noise process'' may be this random sequence of maps, or some other stochastic process that determines the sequence of maps). It is important to note that a trajectory of a RDS is determined by \emph{both} its initial state \emph{and} the realised outcome of the noise process.
\\ \\
Now whenever we talk about ``synchronisation'' of processes, we implicitly assume some kind of \emph{distance function} on the space of states that the processes can attain.  Throughout this Introduction, we shall always assume that the phase space of a random (or deterministic) dynamical system is a Borel-measurable subset of a complete separable metric space, and that the dynamical system is itself jointly continuous in space and c\`{a}dl\`{a}g in time (as in Definition~1.1.8).

\subsubsection{Almost sure synchronisation of trajectories}

Given that we have a metric (say, $d$) on the phase space (say, $X$) of a random dynamical system, it makes sense to talk of ``synchronisation'' or ``mutual convergence'' of paths in $X$ (a ``path in $X$'' meaning an $X$-valued function of time). Specifically, we will say that a collection of paths in $X$ \emph{mutually converges} (or \emph{synchronises}) if, for any two paths $\gamma_1$ and $\gamma_2$ from this collection, $d(\gamma_1(t),\gamma_2(t)) \to 0$ as $t \to \infty$. (And we will say that a collection $\Gamma$ of paths in $X$ is \emph{uniformly mutually convergent} if $\,\sup_{\gamma_1,\gamma_2 \in \Gamma} d(\gamma_1(t),\gamma_2(t)) \, \to \, 0$ as $t \to \infty$.)
\\ \\
With this, we will say that a RDS is ``(\emph{globally}) \emph{synchronising}'' if the following holds: \emph{given any finite set of points in the phase space} (either deterministic, or randomly selected independently of the noise), \emph{with full probability the trajectories starting at these points will mutually converge}.
\\ \\
Of course, to show that a RDS is synchronising, one only actually needs to verify that for \emph{any two deterministic points} in the phase space, the subsequent pair of trajectories will mutually converge with full probability. But the heuristic interpretation of global synchronisation is as follows: \emph{given any finite set of $X$-valued Markov processes evolving simultaneously as trajectories of our RDS (under the same realisation of the noise), we can guarantee that these processes will synchronise.}

\subsubsection{Stable synchronisation}

So far, we have defined what it means for a RDS to be \emph{synchronising}. However, \emph{strictly in and of itself}, this concept may be of little if any practical use. Rather, what we somehow need is a notion of ``stable synchronisation'', where it is ensured that the presence of some tiny unaccounted-for agitations will not prevent synchronous behaviour from being realised.
\\ \\
The simplest illustration of this kind of issue is the distinction between ``attractive fixed points'' and ``asymptotically stable fixed points'' of a deterministic autonomous dynamical system (ADS). For this, the standard example is the discrete-time dynamical system on the circle defined by repeated iteration of an orientation-preserving circle homeomorphism $f$ with a unique fixed point. In the local vicinity of the fixed point, it is easy to show that the dynamics are topologically equivalent to the dynamics of the map
\[ \tilde{f}:\mathbb{R} \to \mathbb{R}, \hspace{4mm} \tilde{f}(x) \, = \, \left\{ \begin{array}{c l}  \tfrac{1}{2}x & x \geq 0 \\ 2x & x \leq 0\end{array} \right. \]
\noindent about its fixed point 0; in other words, the fixed point of $f$ is \emph{attracting on one side and repelling on the other}. So the fixed point is certainly not a ``stable equilibrium'' in any meaningful sense: the future evolution of a trajectory depends sensitively on its initial deviation from the fixed point. (This is formalised by saying that there is a ``local sensitivity'' value $\Delta>0$ such that \emph{every} neighbourhood of the fixed point contains an initial condition whose subsequent trajectory escapes the $\Delta$-neighbourhood of the fixed point. In general, a fixed point of an ADS for which this is not the case is said to be \emph{Lyapunov stable}.)
\\ \\
And yet, since the phase space of our dynamical system just happens to be a \emph{circle} rather than the real line, all those trajectories that are initially repelled away from the fixed point will eventually make their way round the circle to the attracting side of the fixed point. So the fixed point is a ``locally attractive fixed point'', in the na\"{i}ve sense that all trajectories starting near the fixed point eventually converge to the fixed point. (In fact, the fixed point is ``globally attractive'', in that \emph{every} trajectory of the system converges to the fixed point.) However, this statement says \emph{nothing} about \emph{local dynamics near the fixed point} (since, indeed, 0 is not a ``locally attractive fixed point'' of $\tilde{f}$). So then, this na\"{i}ve notion of ``local attractivity'' is, in general, a property of the ``larger-than-local scale'' dynamics! And neither ``local'' nor ``global'' attractivity imply any kind of local \emph{stability} at all.
\\ \\
From a practical point of view, failing to have attractivity within the \emph{local} dynamics about the fixed point can be severely problematic: Imagine a physical process whose time-evolution is described by our dynamical system on the circle. (As we are in discrete time, this will be a process that takes place in discrete steps.) In ``theory'', this process is guaranteed to settle towards the equilibrium state of the system (represented by the fixed point of $f$); and yet in practice, the process will \emph{never} eventually settle around the equilibrium state, since it will always be subject to some kind of unaccounted-for perturbing forces, that will push it over from the attractive side of the equilibrium to the repulsive side. In heuristic terms: the equilibrium state to which the process is attracted cannot be viewed as a ``state of locally minimal energy''.
\\ \\
Now at the root of the above discrepancy between ``local attractivity'' in the na\"{i}ve sense and ``local attractivity'' in a \emph{practically useful} sense is the fact that in the na\"{i}ve version, there is no kind of upper bound on how long it takes for a very nearby trajectory to converge to the fixed point. Indeed in our example, ironically, the \emph{closer} a trajectory is to the fixed point (on the repulsive side), the \emph{longer} it will take to converge to the fixed point!
\\ \\
So then, we can formalise our notion of ``practically useful local attractivity'' as follows: a fixed point of an ADS will be called (\emph{asymptotically}) \emph{stable} if there is a neighbourhood of the fixed point such that the set of trajectories starting in this neighbourhood converges \emph{uniformly} to the fixed point. In other words, we do not allow arbitrarily long convergence times for trajectories starting near the fixed point. In particular, this will guarantee Lyapunov stability.\footnote{Provided the phase space is locally compact, our definition of asymptotic stability is equivalent to the more commonly given definition, that the fixed point is both locally attractive in the na\"{i}ve sense and Lyapunov stable. If the phase space is not locally compact, our definition is at least as strong as the common definition. See the Appendix for details.} A fixed point of an ADS will be called \emph{asymptotically stable in the large} if it is asymptotically stable and every trajectory of the system converges to the fixed point. (Note that a fixed point of an invertible ADS on a compact space can never be asymptotically stable in the large.)
\\ \\
Now our same dynamical system on the circle also demonstrates the problem with na\"{i}vely considering ``synchronisation'' while ignoring the ``stability'' of the synchronisation: suppose we have \emph{two} simultaneous physical processes, both of whose time-evolutions are described by this same dynamical system. Again, in ``theory'' the processes will synchronise, since they will both be attracted to the equilibrium state; however, in practice, due to the ``unaccounted-for perturbing forces'' that we have mentioned, the two processes will always become desynchronised by the time they have reached sufficiently close to the equilibrium.
\\ \\
This motivates the need to define a notion of ``stable synchronisation'' for random dynamical systems. Now the very fact that the future evolution of the system is unpredictable, and (at least in the case of memoryless noise) \emph{will never become any more predictable as time progresses}, places a limitation on how strong a definition of ``stable'' is feasible to work with. Practically speaking, if we required ``almost sure upper bounds'' on the mutual convergence times of nearby trajectories, we would end up ruling out all cases where noise is responsible for creating synchronisation (which is precisely the phenomenon that has motivated our whole study).
\\ \\
Nonetheless, we can still find quite powerful notions of ``stable synchronisation'' that are satisfied surprisingly often in practice. Very crudely speaking, we will want to say that a synchronising RDS is \emph{stably} synchronising if, as the largest possible amount of unaccounted-for perturbation that can occur tends to 0, the likelihood that such perturbation severely impedes synchronisation tends to 0. How best to formalise this is not necessarily straightforward, and may depend on context; but for a \emph{most basic} definition, it will be reasonable to work with the following:

\begin{defn}
A memoryless RDS on a separable complete metric space will be called (\emph{globally}) \emph{stably synchronising} if it is both synchronising and ``everywhere stable'' in the sense that for all $\varepsilon>0$ there exists $\delta>0$ such that for any given subset of the phase space of diameter less than $\delta$, with a probability of at least $1-\varepsilon$ the set of trajectories starting in this subset will be uniformly mutually convergent.
\end{defn}

\noindent (In the case that the phase space is \emph{compact}, there are equivalent definitions to the above given in Sections~0.2.1 and 0.2.2.)
\\ \\
In this definition there is still an element of the same kind of danger that we described further above, namely that two given initial conditions can be arbitrarily close and yet can (with positive probability) take any length of time to synchronise---and can even separate any distance apart, before synchronising! As we have already said, ruling out all such possibility is infeasible; in fact, for invertible RDS on a compact space, it is literally impossible!\footnote{It is clear that a random product or stochastic flow of homeomorphisms on a compact space \emph{cannot} be both globally synchronising and have the property that for all $x \in X$ and $\varepsilon>0$ there exists $\delta>0$ such that for any $y \in X$ with $d(x,y) < \delta$, with \emph{full} probability the subsequent trajectories of $x$ and $y$ remain within a distance of $\varepsilon$ from each other.}
\\ \\
Finally, it is worth noting that if a RDS is synchronising, it does \emph{not} necessarily follow that with full probability \emph{all} trajectories of the RDS mutually converge. Rather, it is only \emph{after} we have selected some finite (or countable) set of initial conditions that with full probability the subsequent trajectories mutually converge. That said, if a memoryless RDS is stably synchronising then we \emph{can} conclude that with full probability there is a \emph{dense open set} of initial conditions whose subsequent trajectories mutually converge. Nonetheless there will still exist, in many cases, a random nowhere-dense unstable set. (Indeed, this \emph{must} be the case for a stably synchronising invertible RDS on a compact space.)

\subsubsection{Negative Lyapunov exponents vs. synchronisation}

In this paper, we will focus specifically on memoryless RDS on a \emph{compact} space. Since we only address the question of ``global'' synchronisation (where \emph{every} pair of initial conditions has to be taken into account), there are important cases that will not be covered by our results and may require some further investigation. For example, suppose we have a product of i.i.d.~random order-preserving homeomorphisms on a closed interval; obviously the endpoints of the interval remain fixed, and so one can only consider the question of synchronisation within the \emph{interior} of the interval---which is not compact. For such a situation, our results in this paper do not say anything (although for that particular case some results are already known, as we shall soon mention).
\\ \\
Now since we assume a compact phase space, the family of Markov transition probabilities describing individual trajectories of the RDS (henceforth called the ``one-point transition probabilities'') admits at least one ergodic probability measure $\rho$. Assuming some differentiable structure on the phase space, and provided the RDS is itself sufficiently regular and well-behaved with respect to this differentiable structure, one can talk of Lyapunov exponents, and the following is known to be true (see Section~0.2.4): if the list of Lyapunov exponents associated to $\rho$ consists of only \emph{negative} exponents, then with full probability the trajectory of $\rho$-almost every initial condition in the phase space will be \emph{exponentially stable}. (Given a realisation of the noise, a trajectory of the RDS is called ``exponentially stable'' if there is a neighbourhood of its initial position such that the set of trajectories starting in this neighbourhood is uniformly mutually convergent at an exponential rate; when we have uniform mutual convergence at a rate that is not necessarily exponential, we simply say that the trajectory is \emph{asymptotically stable}.\footnote{Unlike the case of a fixed point of an autonomous dynamical system, our definition of asymptotic stability is no longer precisely equivalent to the usual definition, even on a locally compact phase space---rather, it is very slightly stronger. See the Appendix for details. (However, our definition of \emph{exponential} stability is the standard definition.)})
\\ \\
It is commonly said that ``negativity of the Lyapunov exponents implies synchronisation''. However, it is really only a \emph{local synchronisation} property that is implied by the negativity of the Lyapunov exponents associated to $\rho$---namely, that (as a simple corollary of the above) for $\rho$-almost every initial condition $x$, given any sequence of initial conditions $(y_n)_{n \in \mathbb{N}}$ tending to $x$, the probability that the trajectories of $x$ and $y_n$ mutually converge tends to 1 as $n \to \infty$.
\\ \\
The natural question, then, is to find conditions under which we can pass from such \emph{local} properties to global stable synchronisation.

\subsubsection{Our main result}

In this paper, we provide an answer to the above question, namely:

\begin{theo}
Given a memoryless RDS on a compact metric space $X$, and a stationary probability measure $\rho$ for its one-point transition probabilities, the RDS is stably synchronising if and only if the following all hold:
\begin{enumerate}[\indent (i)]
\item every (deterministic) non-empty closed subset of $X$ that is almost-surely forward-invariant under the RDS has $\rho$-full measure;$\,$\footnote{This is equivalent to saying that the support of $\rho$ is the only non-empty closed subset of $X$ on which $\varphi$ has ``minimal dynamics'' (in the sense of Definition~1.2.5).}
\item any two points in the support of $\rho$ have a positive probability of being moved closer together (in forward time) under the RDS;
\item with positive probability there is at least one asymptotically stable trajectory within the support of $\rho$.
\end{enumerate}
\noindent Moreover, if the RDS is synchronising, then $\rho$ is the only stationary probability measure.
\end{theo}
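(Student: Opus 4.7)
I would handle the two implications separately, placing the uniqueness claim inside the necessity direction (where it requires only synchronisation, not stability).

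\emph{Necessity.} Assume the RDS is stably synchronising. Uniqueness of $\rho$ comes first: if $\rho'$ is a second stationary measure, take $X \sim \rho$ and $Y \sim \rho'$ independent of the noise; synchronisation gives $d(\varphi_t X, \varphi_t Y) \to 0$ almost surely, while the marginal laws remain $\rho$ and $\rho'$ for every $t$, forcing $\rho = \rho'$. Condition (iii) is an immediate consequence of the stability clause applied with $\varepsilon < 1$: for some $\delta > 0$, a ball $B_\delta(x_0)$ around any $x_0 \in \mathrm{supp}(\rho)$ has uniformly mutually convergent trajectories with positive probability, which is exactly the event that $\varphi_\cdot x_0$ is asymptotically stable. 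For (ii), if some pair $x, y \in \mathrm{supp}(\rho)$ could almost surely never be moved closer, then $d(\varphi_t x, \varphi_t y) \geq d(x, y) > 0$ for all $t$ with probability one, contradicting synchronisation. For (i), take a non-empty closed almost-surely forward-invariant $K$ and any $x \in K$; synchronisation of $\varphi_\cdot x$ with $\varphi_\cdot y$ combined with closedness of $K$ gives $d(\varphi_t y, K) \to 0$ a.s.\ for every $y \in X$; averaging against $y \sim \rho$ independent of the noise and invoking stationarity yields $\int d(z, K)\, d\rho(z) = 0$, hence $\rho(K) = 1$.

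\emph{Sufficiency.} Now assume (i)--(iii). I would pass to the two-point dynamics $(x, y) \mapsto (\varphi_t x, \varphi_t y)$ on $X \times X$, whose diagonal $\Delta$ is forward-invariant; synchronisation is equivalent to every two-point trajectory converging to $\Delta$. Condition (iii) supplies a \emph{stable germ}: an open $V \ni (x_0, x_0)$ with $x_0 \in \mathrm{supp}(\rho)$ such that on a positive-probability realisation event, every pair in $V$ is uniformly absorbed onto $\Delta$. The key step is to show that every two-point trajectory enters $V$ infinitely often almost surely; a Borel--Cantelli / tail $0$--$1$ argument exploiting memorylessness then upgrades the positive-probability absorption into full-probability synchronisation. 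Visits to $V$ are forced by (i) and (ii): if a non-empty closed deterministic $L \subseteq X \times X$ were almost-surely two-point forward-invariant and disjoint from $V$, projecting $L$ onto each factor and invoking (i) would confine $L$ to $\mathrm{supp}(\rho) \times \mathrm{supp}(\rho)$, after which iterated application of (ii), organised by the Markov structure, would produce pairs $(x_n, y_n) \in L$ with $d(x_n, y_n) \to 0$; compactness would then supply a subsequential limit $(z, z) \in L \cap \Delta$, contradicting $L \cap V = \emptyset$ once $z$ is compared with $x_0$ via (i). Finally, the ``everywhere stable'' clause in the definition is recovered from the uniformity of absorption on $V$ together with the fact that, by the same minimality, from any subset of small diameter the two-point orbit enters such a stable germ with high probability in bounded time.

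\emph{Main obstacle.} The delicate step is the two-point-level argument: condition (i) is a one-point statement, so leveraging it to rule out non-trivial two-point invariant sets disjoint from $\Delta$ requires pairing it with (ii) and a careful Markov-and-compactness argument to show that iterated ``contracting'' random maps actually drive pairs arbitrarily close, not merely to some positive infimum. A secondary delicate point is the passage from the ``positive probability of eventual absorption'' supplied by (iii) to the ``full probability of synchronisation'' in the conclusion, handled by a $0$--$1$ law on the tail sigma-algebra generated by the memoryless noise increments.
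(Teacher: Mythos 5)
Your necessity direction is essentially sound and largely parallels the paper; your argument for (i) via averaging $d(\varphi_t y, K)$ against $\rho$ and invoking stationarity is a pleasant direct variant of the route through unique ergodicity.

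The sufficiency direction, however, has a genuine gap at exactly the point you flag as a ``secondary delicate point'': the passage from the positive-probability stable germ provided by (iii) to full-probability synchronisation. The proposed ``Borel--Cantelli / tail $0$--$1$'' step does not close. The synchronisation event $\{d(\varphi_t x,\varphi_t y)\to 0\}$ is not a tail event of the noise increments, since the entire trajectory (and hence where it is when it re-enters the germ) depends on the early part of $\omega$; and if one instead tries to run a geometric-trials argument across successive visits to the germ, the obstruction is that ``germ did not absorb at the $k$-th visit'' is determined by the infinite future, so those visits cannot be turned into a sequence of stopping times with independent success probabilities bounded below. The paper resolves this by a two-stage argument that you are missing. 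First one proves (Lemma~2.2.13 and Theorem~2.2.14 in the paper) that if $K$ is minimal and merely \emph{admits} stable trajectories, then in fact \emph{every} point of $K$ is \emph{almost surely} stable; the mechanism is that the set $O$ of asymptotically stable pairs $(\omega,x)$ is $(\Theta^t)$-backward-invariant, hence $\mathbb{P}\otimes\rho$-almost invariant, and $\mathbb{P}\otimes\rho$ is ergodic for an ergodic $\rho$ supported on $K$, so $\mathbb{P}\otimes\rho(O)\in\{0,1\}$ and the positive-probability assumption forces $1$; then a hitting-time argument pushes this to every $x\in K$. Only once the absorption probability near a point $p\in K$ can be made arbitrarily close to $1$ (i.e.\ $P_{1/n}(p)\to 1$, requiring $p$ a.s.\ stable rather than potentially stable) does the first-hit decomposition of the paper's Lemma~2.2.15 yield synchronisation probability $1$, via $\mathbb{P}(\bigcup_s R_{n,s}\cap S_{n,s})\ge\inf_s\mathbb{P}(S_{n,s})$ with the $R_{n,s}$ disjointified into first-hit events that are $\mathcal{F}_s$-measurable and hence independent of the future absorption event $S_{n,s}$. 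Without the preliminary upgrade, all your argument delivers is $\mathbb{P}(\text{sync})\ge p_0>0$.

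A smaller issue in the two-point minimality step: from (i) you can conclude that the projections of a nonempty closed forward-invariant $L\subset X\times X$ \emph{contain} $\mathrm{supp}\,\rho$, not that $L$ is \emph{confined} to $\mathrm{supp}\,\rho\times\mathrm{supp}\,\rho$. The paper sidesteps this by using the convergence-to-$K$ statement from Theorem~2.2.14 (itself derived from the stability upgrade above) to show directly that every such $L$ meets $K\times K$, after which two-point contractibility on $K$ plus Lemma~1.2.7 applied to $\varphi\times\varphi$ forces $L\supset\Delta_K$. So this secondary fix also passes through the very lemma you are missing.
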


\noindent In the context of differentiable RDS and ``Lyapunov exponents'', condition (iii) is implied by the negativity of all the Lyapunov exponents associated to $\rho$ (assuming that $\rho$ is ergodic; if $\rho$ is not ergodic, then one can consider the ergodic decomposition of $\rho$---see Remark~2.2.12). It is also worth saying that in general, $\rho$ being the only stationary probability measure is a \emph{stronger} statement than condition (i). (Hence, \emph{given} (iii), conditions (i) and (ii) serve as necessary and sufficient conditions for the RDS to be synchronising.)
\\ \\
The above theorem (excluding the final assertion) is an equivalent formulation of Theorem~3.2.1 of this paper (the difference being that Theorem~3.2.1 is expressed without explicit reference to a measure on the phase space). Condition~(i) can actually be expressed as a condition on the one-point transition probabilities (since, by a continuity argument, forward-invariance with respect to the one-point transition probabilities is equivalent to almost-sure forward-invariance under the RDS). Condition~(ii) can be expressed as a condition on the \emph{two}-point transition probabilities. (This ``two-point contractibility'' described in condition~(ii) has been considered by Baxendale and Stroock---see Proposition~4.1 of [BS88], or condition~(4.1) in [Bax91].)
\\ \\
So in essence, our theorem says that under an appropriate one-point condition and an appropriate two-point condition, one can pass from local-scale synchronisation to global-scale synchronisation. For a brief discussion of the essential points of the proof, see Section~0.3.
\\ \\
(The final assertion in the above theorem---which is Proposition~2.1.4 of this paper---is a straightforward result; implicitly, it was proved in Corollary~2 of [KN04], although the proof that we shall give is much more elementary and direct.)

\subsection{A Review of Synchronous Behaviour in Random Systems}

Crudely speaking, the way in which noise induces synchronous behaviour in a dynamical system is by altering the proportions of time that trajectories spend in contractive regions of the phase space versus the time spent in expansive regions. It turns out that even \emph{chaotic} dynamical systems can be made to exhibit large-scale synchronous behaviour when noise is added;\footnote{The general concept that the addition of noise can induce some kind of order out of chaos was perhaps first reported in [MT83].} some examples of such are described in [TMHP01].
\\ \\
We will introduce some of the different types of synchronous behaviour that can be studied in RDS. (There does not yet seem to exist standard nomenclature for some of these concepts, and so for convenience, much of the terminology used will be our own.) In all that follows, a ``stationary (resp.~ergodic) probability measure'' for a memoryless RDS refers to a stationary (resp.~ergodic) probability measure on $X$ for the family of (one-point) transition probabilities defined by the RDS.

\subsubsection{Concepts of stability and synchronisation}

In the analytic study of RDS, questions of stability and synchronisation will often divide into two categories: (1) Does the RDS exhibit stable behaviour (in some sense) within the local vicinity of some/most/all trajectories? (2) Assuming we know enough about the RDS's local behaviour near individual trajectories, can we somehow ``extend'' this to deduce that on a ``larger-than-local'' scale, different trajectories will approach each other in the long run? (This paper is concerned with the second category.)
\\ \\
Examples of ``local'' behaviour about trajectories include the notions of Lyapunov, asymptotic and exponential stability of trajectories, and the notion of ``Lyapunov exponents'' (which will be discussed more in Section~0.2.4). For convenience, we will say that a memoryless RDS is ``$\rho$-almost everywhere asymptotically [resp.~exponentially] stable'' (where $\rho$ is a stationary probability measure) if it is almost surely the case that for $\rho$-almost every initial condition, the subsequent trajectory will be asymptotically [resp.~exponentially] stable.
\\ \\
For a memoryless RDS, the ``larger-than-local scale'' concepts that we shall discuss are:
\begin{enumerate}[\indent (a)]
\item global synchronisation and global stable synchronisation, as defined earlier (but we will generally omit the word ``global'');
\item ``$\rho$-almost-everywhere stable synchronisation''---given a stationary probability measure $\rho$, this is the phenomenon that with full probability there is an open set of $\rho$-full measure such that all trajectories starting in this set are asymptotically stable and synchronise with each other;
\item equivalence classes defined by the following random (i.e.~noise-dependent) equivalence relation on the phase space: two initial conditions are \emph{equivalent} if their subsequent trajectories mutually converge;
\item ``statistical synchronisation'' with respect to a stationary probability measure $\rho$---this is the phenomenon that the distance between the trajectories of two randomly selected initial conditions, each selected with distribution $\rho$ independently of each other and of the noise, converges in probability to 0 as time tends to infinity.
\end{enumerate}

\noindent The use of terminology set out in (a)--(d) above is all our own; however, as we shall see, most of these concepts have been studied before (sometimes under different but equivalent definitions).
\\ \\
On a compact phase space, a memoryless RDS is globally stably synchronising if and only if it is both globally synchronising and $\rho$-almost everywhere asymptotically stable (where $\rho$ is the unique stationary probability measure).\footnote{This can be justified by Theorem~2.2.14 of this paper.} More generally, given a stationary probability measure $\rho$, we have the following implications:

\begin{center}
\hspace{-7mm}
\begin{tikzpicture}
  \matrix (m) [matrix of math nodes,row sep=3em,column sep=-4em,minimum width=2em] {
      & \textrm{synchronisation} &  \\
   \textrm{stable synchronisation} & & \textrm{statistical synchronisation} \\
      & \shortstack{\textrm{almost everywhere} \\ \textrm{stable synchronisation}} & \\
      & \shortstack{\textrm{almost everywhere} \\ \textrm{asymptotic stability}} & \\};
  \path[-stealth]
    (m-2-1) edge [double] (m-1-2)
    (m-2-1) edge [double] (m-3-2)
    (m-1-2) edge [double] (m-2-3)
    (m-3-2) edge [double] (m-2-3)
    (m-3-2) edge [double] (m-4-2);
\end{tikzpicture}
\end{center}

\noindent For a compact phase space, all the concepts in the above diagram do not rely on a given \emph{metric} on the phase space, but depend only on the \emph{topology} of the phase space; in other words, they are \emph{preserved} under switching between different \emph{topologically equivalent} metrics on the phase space. (Similarly, for smooth RDS on a compact smooth manifold, Lyapunov exponents are preserved under switching between different Riemannian metrics.)
\\ \\
However, when working on a non-compact space, the concepts in the above diagram---except statistical synchronisation---are \emph{not} topological concepts, but depend on the ``uniform structure'' on the phase space (that is, they are not always preserved under switchting between topologically equivalent metrics, but they are always preserved under switching between uniformly equivalent metrics\footnote{Two metrics $d$ and $d'$ on a set $X$ are \emph{uniformly equivalent} if the identity function on $X$ is both uniformly $(d,d')$-continuous and uniformly $(d',d)$-continuous.}). For example: on a non-compact smooth manifold, given \emph{any} pair of paths that escapes every compact set, one can always ``stretch out'' the Riemannian metric on the manifold to a sufficient extent that the paths do not mutually converge. (This kind of argument does not apply to \emph{statistical} synchronisation, because statistical synchronisation is based on convergence in probability: even if a trajectory escapes every compact set, at any one given time it may only have a very small probability of being outside a sufficiently large compact set.)
\\ \\
Statistical synchronisation is quite remarkable, in that it is not only preserved by any switch between \emph{topologically equivalent} metrics on the phase space, but is even preserved by any switch between \emph{measurably equivalent} separable metrics on the phase space! (The author has not actually seen this result explicitly elsewhere, but it follows essentially immediately from the proofs of known results. See Section~0.2.2 for more detail.)

\subsubsection{Relation to random fixed points}

\noindent In Section~0.1.4, we hinted at a kind of parallel between the notion of synchronisation of trajectories (in random or deterministic systems) and attracting fixed points of deterministic systems. It turns out that there is quite a strong mathematical basis behind this parallel, which we shall describe now.
\\ \\
For a given RDS, a ``random trajectory'' will mean an assignment, to each possible realisation of the noise, of one trajectory of the RDS under that same realisation of the noise.
\\ \\
We consider a random dynamical system, where the underlying noise process is modelled as having been going on since eternity past (so each ``noise realisation'' consists of both a \emph{future} and a \emph{past}). In this case, there is a special kind of random trajectory, that serves as the RDS-analogue of the concept of a fixed point of an ADS; the initial condition of such a random trajectory (i.e.~its location at time 0---which is a random variable) is called a \emph{random fixed point} or an \emph{equilibrium}. Precise definitions are given in Remark~2.3.2.
\\ \\
Now if the location of a random fixed point is determined (modulo zero-probability events) just by the \emph{past} of the noise, then we will say that it is ``past-measurable''. Using the definition given in Remark~2.3.2, it is easy to show that for a memoryless RDS, the law of any past-measurable random fixed point is a stationary probability measure. In the converse direction ([KS12],~Theorem~4.2.9), given any stationary probability measure $\rho$ there exists a unique (modulo zero-probability events) past-measurable ``random invariant measure'' whose expectation is $\rho$---again, see Remark~2.3.2 for precise definitions---and if this ``random invariant measure'' is a random Dirac mass, then the random variable on which it is concentrated is a past-measurable random fixed point.
\\ \\
Given a stationary probability measure $\rho$ for a memoryless RDS, we have the following results linking synchronisation and random fixed points:

\begin{itemize}
\item The RDS is statistically synchronising with respect to $\rho$ if and only if there exists a past-measurable random fixed point whose law is $\rho$ (due to Proposition~2.6(i) of [Bax91]).
\item The RDS is $\rho$-almost everywhere stably synchronising if and only if there exists a past-measurable random fixed point whose law is $\rho$ and whose (forward-time) trajectory is asymptotically stable almost surely (by Proposition~3 of [LeJan87] with $n=1$).
\item If the phase space is compact, then the following are equivalent:\footnote{The equivalence of (i) and (ii) is due to Remark~2.2.7 of this paper; the equivalence of (i) and (iii) is due to Remark~2.2.16 of this paper.}
\begin{enumerate}[\indent (i)]
\item the RDS is stably synchronising;
\item the RDS is synchronising, and for every initial condition in the phase space, the subsequent trajectory is almost surely asymptotically stable;
\item there is a past-measurable random fixed point, whose trajectory is asymptotically stable almost surely, with the additional property that every initial condition in the phase space has full probability of belonging to the basin of (forward-time) attraction of the trajectory of this random fixed point.
\end{enumerate}
\end{itemize}

\noindent So in a sense: ``almost sure stable synchronisation'' with respect to a stationary probability measure $\rho$ (in the context of \emph{R}DS) is the \emph{stochastic equivalent} of ``asymptotic stability'' for a fixed point $p$ (in the context of \emph{A}DS); and similarly, ``global stable synchronisation'' can be seen as the stochastic equivalent of the existence of a fixed point that is asymptotically stable in the large. (Although asymptotic stability in the large is impossible for invertible ADS on a compact space, global stable synchronisation is certainly possible for invertible RDS on a compact space.)
\\ \\
Now it is worth mentioning that the definition of a random fixed point does not make any reference to topological concepts, but only to measurable concepts. Moreover, the existence and uniqueness of a past-measurable random invariant measure with expectation $\rho$ does not require any continuity properties of the RDS.\footnote{The construction of the random invariant measure, as given in Theorem~4.2.9 of [KS12], does assume a Polish topology on the phase space, in order for the limit involved to be meaningful (and the ``exceptional null set'' in the construction depends on which Polish topology is taken). However, continuity of the RDS with respect to this topology is not needed for the construction to work, \emph{nor} is it actually needed for the constructed measure to be invariant: to show that the measure $\mu$ with disintegration $(\mu_\omega)$ is invariant under $\Theta^t$, it suffices to show that for all $\tau>0$ the measures $\mu|_{\mathcal{F}_{\!\!-(\tau+t)}^\infty \otimes \mathcal{B}(X)}$ and $\Theta^t_\ast\!\left(\mu|_{\mathcal{F}_{\!-\tau}^\infty \otimes \mathcal{B}(X)}\right)$ on $\mathcal{F}_{\!\!-(\tau+t)}^\infty \otimes \mathcal{B}(X)$ are equal, which is straightforward by considering disintegrations with respect to the restricted probability space $(\Omega,\mathcal{F}_{\!\!-(\tau+t)}^\infty,\mathbb{P}|_{\mathcal{F}_{\!\!-(\tau+t)}^\infty})$.} Consequently, statistical synchronisation is really a property of \emph{measurable RDS on standard measurable spaces}.

\subsubsection{Special cases for verifying synchronisation}

We mentioned in Section~0.2.1 that one may ask first about the local stability properties of a RDS and then, from there, address the issue of larger-scale synchronous behaviour. However, before we look at local stability, let us first mention some cases where one can address the issue of larger-scale synchronous behaviour more directly:
\\ \\
(I)~If one can show that there is a finite time $T$ such that for all $0<t<T$ the time-$t$ map of the RDS is almost surely a contraction, then it is clear that the RDS will be globally synchronising, and this synchronisation will be stable in any reasonable sense. (Such a situation is impossible for invertible RDS on a compact space, since a contraction on a compact space cannot be surjective.)
\\ \\
(II)~In the context of \emph{invertible} RDS on a \emph{circle}, one may be able to find conditions guaranteeing some large-scale synchronous behaviour without making any direct reference to local dynamics. For example, Theorem~1 of [KN04] considers a product of i.i.d.~random orientation-preserving circle homeomorphisms, and gives sufficient conditions for synchronisation (indeed, the conditions given are sufficient for stable synchronisation). In the case of a memoryless orientation-preserving diffeomorphic RDS on the circle with an ergodic probability measure $\rho$, provided the system is ``reasonably well behaved'' (see e.g.~(H1) and (H2)\footnote{Beware that in [LeJan87] the characters $||^2$ are missing from the end of the denominator in the formula for $\delta_2(T)$.} in [LeJan87]), we have the following: if $\rho$ is equivalent to the Lebesgue measure then, unless $\rho$ is almost-surely preserved under the flow, the Lyapunov exponent associated to $\rho$ is automatically guaranteed to be negative (e.g.~by Proposition~1(b) of [LeJan87]); and so all the general results about deducing large-scale synchronisation from negative Lyapunov exponents immediately apply. (This is exemplified in the comments after Remark~4.5 of this paper.)
\\ \\
(III)~For a memoryless monotone RDS on $\mathbb{R}$ or a Borel subset thereof, if there is an ergodic probability measure $\rho$ then the RDS is guaranteed to be statistically synchronising with respect to $\rho$ (by Theorem~18.4(iv) of [Arn98], combined with the characterisation of statistical synchronisation given in Section~0.2.2). Similar results can also be found for monotone RDS on higher-dimensional spaces (see e.g.~Theorem~1 of [CS04], and for an application of this result see Proposition~5.6 of [CCK07]). Also, just as for the circle in case (II), given a memoryless diffeomorphic monotone RDS on a compact interval $[a,b]$ with an ergodic probability measure $\rho$, provided the system is ``reasonably well behaved'' we have the following: if $\rho$ is equivalent to the Lebesgue measure then the Lyapunov exponent associated to $\rho$ is automatically guaranteed to be negative; consequently, we can immediately conclude that the RDS is synchronising on $(a,b)$ (see Sections~0.2.4 and 0.2.5).
\\ \\
It is worth saying that in \emph{continuous} time, for a memoryless diffeomorphic RDS on a line/circle with continuous trajectories, any non-Dirac ergodic probability measure $\rho$ must be equivalent to the Lebesgue measure restricted to $\mathrm{supp}\,\rho$. (Hence, as exemplified in Corollary~4.4 of [Crau02], the assertions in (II) and (III) can be made without explicit statement of the condition of equivalence to the Lebesuge measure.)

\subsubsection{Local-scale synchronous behaviour (``stability'')}

Suppose we have a memoryless smooth RDS (that is: spatially smooth, with partial derivatives that are jointly continuous in space and time) on a smooth manifold $X$, and let $\rho$ be an ergodic probability measure. Provided the RDS has sufficiently well-controlled first-order spatial derivatives, there will exist a finite list of \emph{Lyapunov exponents} (heuristically speaking, ``exponential rates of repulsion on the infinitesimal scale'') that is common to the trajectories of $(\rho \otimes \mathbb{P})$-almost all combinations of an initial condition and a noise realisation (see [Arn98], Theorem~4.2.6).
\\ \\
In such a context, when addressing the question of local stability the first consideration will generally be to establish the \emph{sign of the maximal Lyapunov exponent}. For a smooth RDS on a compact manifold with sufficiently well-controlled (higher-order) spatial derivatives, it is known (see e.g.~the start of Section~3 of [LeJan87]) that if all the Lyapunov exponents are negative then the system is $\rho$-almost everywhere exponentially stable; in particular, this implies that with full probability there will exist a partition of $\rho$-almost the whole of $X$ into some open regions such that all trajectories starting in the same region mutually converge. This result is essentially the ``codimension-0 case of the random stable manifold theorem''. Similar statements can also be obtained for RDS on non-compact manifolds (see e.g.~Corollary~3.1.1 and Remark~(iii) of [MS99], which considers SDEs on Euclidean space)\footnote{For a general extensive treatment of random invariant manifolds, see Chapter~7 of [Arn98].}.
\\ \\
(It is worth saying that, although Lyapunov exponents are defined as an ``asymptotic property'', strict negativity of the maximal Lyapunov exponent can easily be expressed as a ``finite-time property''---see e.g.~Remark~2.2.12 of this paper.)

\subsubsection{Larger-scale synchronous behaviour}

For all the following results, we assume that we have a memoryless RDS with an ergodic probability measure $\rho$.
\\ \\
\textbf{(A)} Let us first mention (in the context of smooth RDS) the case of a \emph{null maximal Lyapunov exponent}. The task of establishing whether there is stable or synchronous behaviour of any kind when the maximal Lyapunov exponent associated to $\rho$ is exactly zero can be notoriously difficult; and (as far as the author is aware) there is no reason to expect that this case can be ``dismissed as being the complement of the generic situation'' (rather, for evidence to the contrary, see [BBD14]).
\\ \\
One surprising result where a conclusion is obtained from a null maximal Lyapunov exponent is Theorem~5.8 (and in particular its consequence, Corollary~5.12) of [Bax91]. Here, the subject under consideration is the flow of an autonomous SDE on a compact connected manifold $X$ (of dimension greater than 1) driven by a multidimensional Wiener process. A non-degeneracy condition is required, namely that the H\"{o}rmander bracket condition is satisfied by the set of diffusion coefficients in the ``lifted'' SDE (on the manifold of non-zero tangent vectors on $X$) describing the time-evolution of a non-trivial tangent vector under the total spatial derivative of the flow of the original SDE (see equation~(2.4) of [BS88]). This condition guarantees in particular that $\rho$ is the only stationary probability measure and is equivalent to the Riemannian volume measure.
\\ \\
Corollary~5.12 of [Bax91] essentially states that under this condition, if the maximal Lyapunov exponent is zero and any two points $X$ have a positive probability of being moved closer together under the flow (``two-point contractibility''), then the system is statistically synchronising with respect to $\rho$. (The two-point contractibility condition is formulated in a different but equivalent manner---see Section~3.1 for further details.)
\\ \\
\textbf{(B)} Perhaps the first major result linking local asymptotic stability to larger-scale synchronous behaviour was provided by Propositions~2~and~3 of [LeJan87] (which work in discrete time and on a compact manifold, but also generalise to continuous time and to non-compact spaces). Here it was shown that if the system is $\rho$-almost everywhere asymptotically stable (as implied by negativity of the maximal Lyapunov exponent in Section~0.2.4), then the partition of mutual convergence described in Section~0.2.4 will (in its coarsest form) almost surely consist of $n$ open regions of equal $\rho$-measure $\frac{1}{n}$, where $n$ is independent of the realisation of the noise. The case that $n=1$ is precisely the case that the RDS is $\rho$-almost everywhere stably synchronising.
\\ \\
\textbf{(C)} The ``$n$'' in Le~Jan's result corresponds to the number of random atoms in the past-measurable random invariant measure whose expectation is $\rho$ (which will be purely atomic if the system is $\rho$-almost everywhere asymptotically stable); for a \emph{monotone} RDS on $\mathbb{R}$, this random invariant measure will always be a random Dirac mass. (This is an equivalent formulation of the first assertion of (III) in Section~0.2.3). Consequently, given a monotone RDS on $\mathbb{R}$ that is $\rho$-almost everywhere asymptotically stable, the ``$n$'' in Le~Jan's result is equal to 1. By monotonicity, this actually implies that the RDS is synchronising on the interior of $\mathrm{supp}\,\rho$; in particular, if $\rho$ has full support then the RDS is globally stably synchronising.
\\ \\
Using this fact, one can find situations where the addition of noise to a non-synchronising monotone dynamical system induces synchronisation. This is well exemplified in [CF98], where it is shown that the presence of noise can ``destroy a pitchfork bifurcation''; namely, adding white noise to the right-hand side of the ODE $\,dX_t = (\alpha X_t - X_t^3)dt\,$ has the effect of ``blurring'' the two asymptotically stable fixed points for $\alpha>0$ into one random fixed point that is asymptotically stable in the large, just as is present for $\alpha \leq 0$. Moreover in [CDLR13], ``random topological conjugacies'' (as in Definition~1.9.8 of [Arn98]) were shown to exist between any pair of SDEs in the $\alpha$-indexed family of SDEs (viewed as one stochastic family of differential equations simultaneously driven by the same noise process) defined by varying $\alpha$ in a sufficiently small neighbourhood of 0. So after the addition of noise, there is a kind of ``stochastic structural stability'' about $\alpha=0$.
\\ \\
Nonetheless, as pointed out in [CDLR13], one cannot claim that all qualitiative changes in behaviour across the critical parameter-value $\alpha=0$ disappear after the addition of noise: for $\alpha<0$, after the addition of noise there \emph{continues to exist} an exponentially decaying almost-sure upper bound on the distance between the trajectories of any two given initial conditions; meanwhile for $\alpha>0$, after the addition of noise the system \emph{continues not to have this property}. In fact, for $\alpha>0$ the trajectories of \emph{any} two distinct points $x$ and $y$ could take an arbitrarily long time to synchronise. (This last point was not explicitly proved in [CDLR13], but is easy to see: since the ergodic measure has full support and the system has continuous trajectories, there will come a time at which 0 lies between the trajectories of $x$ and $y$---and then the system can behave like the deterministic system for any length of time.) To represent this qualitative change across $\alpha=0$, a new type of local structural stability for RDS was introduced in [CDLR13], which was shown to fail for the family of SDEs under consideration: namely, it was shown that any ``random topological conjugacy'' between a negative $\alpha$-value and a positive $\alpha$-value cannot be bicontinuous at the random fixed point \emph{uniformly} across a full-measure set of noise realisations. (This loss of ``equicontinuous structural stability'' was linked to the change in sign of the maximum of the dichotomy spectrum.)
\\ \\
\textbf{(D)} The result that is most directly related to our main result in this paper is Theorem~4.10(i) of [Bax91]. Once again, a SDE on a compact connected manifold $X$ driven by a $d$-dimensional Wiener process (for some $d \in \mathbb{N}$) is considered. Again, a non-degeneracy condition is required, namely that H\"{o}rmander's condition and a kind of ``minimal dynamics'' condition\footnote{specifically: given any initial condition and any non-empty open set, there is a $C^1$ sample path for the driving Wiener process under which the forward-trajectory of the initial condition will reach the open set.} hold for the ``lifted'' SDE (on the unit sphere bundle of $X$) describing the normalised evolution of a non-zero tangent vector under the total spatial derivative of the flow of the original SDE (see equation~(2.6) of [BS88]). As stated in [BS88] and [Bax91], this condition is weaker than the non-degeneracy condition assumed for Theorem~5.8 of [Bax91], but it still guarantees that $\rho$ is only stationary probability measure and is equivalent to the Riemannian volume measure.
\\ \\
Theorem~4.10(i) of [Bax91] states that under this condition, if the Lyapunov exponents are all negative and any two points in $X$ have a positive probability of being moved closer together under the flow, then the system is synchronising. (Indeed, the system is stably synchronising, since synchronisation and $\rho$-almost everywhere asymptotic stability together imply stable synchronisation.)

\subsection{Comparison of Baxendale's result with our result}

For a memoryless RDS, we will use the term ``invariant set'' to mean a non-empty closed deterministic set that is almost surely forward-invariant under the RDS. Our main result (as given in Section~0.1.6) improves upon the above result of Baxendale ([Bax91], Theorem~4.10) in a few important ways.
\\ \\
Firstly, and most importantly, we have replaced the SDE-specific non-degeneracy condition with the (simpler and weaker) condition that $\mathrm{supp}\,\rho$ is the smallest invariant set. This is important because it allows the result to be no longer specific to SDEs, but also applicable to discrete-time and even non-invertible random dynamical systems. Our proof makes no reference to notions from stochastic calculus.
\\ \\
A second important improvement is that our conditions are necessary and sufficient for stable synchronisation (and moreover, \emph{given} the knowledge of $\rho$-almost everywhere asymptotic stability, our conditions are necessary and sufficient for synchronisation). The conditions in [Bax91] are not necessary conditions for synchronisation, since there exist stably synchronising SDEs whose stationary probability measure does not have full support. (Take, for example, the equation $\,dX_t = \sin(2 \pi X_t)dt + \cos(2 \pi X_t) \!\! \circ \!\! dW_t\,$ on the unit circle: the invariant sets are the whole circle and the interval $[\frac{1}{4},\frac{3}{4}]$, and so there is an ergodic probability measure whose support is $[\frac{1}{4},\frac{3}{4}]$; using the content of Section~0.2.3, the system must be synchronising on $(\frac{1}{4},\frac{3}{4})$, and then it is easy to use our main result to show that the system is stably synchronising on the whole circle.)
\\ \\
Furthermore, we do not require one to verify the two-point contractibility condition on the whole phase space $X$ but only on $\mathrm{supp}\,\rho$, which we do \emph{not} require to be equal to the whole of $X$. (Now it is worth mentioning: the assumption of two-point contractibility on the whole of $X$ automatically implies that there is a smallest invariant set; but the support of a given ergodic probability measure might not be equal to the smallest invariant set, unless we happen to know that there is only one ergodic probability measure.)
\\ \\
A (relatively minor) additional improvement is the fact that we do not require any form of \emph{exponential} stability (while the proof in [Bax91] genuinely makes use of the \emph{strict} negativity of all Lyapunov exponents, beyond the mere fact that this implies asymptotic stability).
\\ \\
The last improvement to mention is that our proof is considerably technically simpler and much more elementary than in [Bax91]. The \emph{``key fact''} underlying our proof is the (rather intuitive) fact that given any initial condition for a memoryless RDS, if with positive probability the subsequent trajectory stays forever inside some compact set $C$, then $C$ must contain an invariant set (see e.g.~Lemma~1.2.7 of this paper).
\\ \\
Now the proof in [Bax91] also indirectly uses this fact (applied to the \emph{two-point motion}, which is an RDS on $X \times X$) via citation of Proposition~4.1 of [BS88], where (using a very similar argument to our proof of Lemma~1.2.7) it is proved that under the two-point contractibility condition on $X$, the subsequent trajectories of any given pair of initial conditions will reach arbitrarily small distances of each other with full probability.\footnote{Proposition~4.1 of [BS88] actually gives a statistical statement about how long this will take, but that is not relevant here.} This result is then combined with a preliminary result (Theorem~4.6 of [Bax91], which essentially states that the system is everywhere locally synchronising) to yield global synchronisation. (All the real complexities of Baxendale's proof are within the proof of this preliminary result, although as we shall soon mention, even this preliminary result can be proved more straightforwardly.)
\\ \\
However, our above ``key fact'' actually yields a \emph{stronger} conclusion than the result that [Bax91] cites from Proposition~4.1 of [BS88]. Namely, it yields the following: the two-point contractibility condition on $X$ implies that the subsequent trajectories of any given pair of initial conditions will, with full probability, simultaneously reach an arbitrarily small distance of \emph{any} given point within the smallest invariant set. It immediately follows that \emph{if} there is a point within the smallest invariant set whose subsequent trajectory is almost surely asymptotically stable (as implied by $\rho$ being the unique stationary probability measure and having negative Lyapunov exponents), then the system is synchronising.
\\ \\
Thus, on the one hand, we can straightforwardly prove Baxendale's synchronisation result, without the preliminary Theorem~4.6. But on the other hand, the \emph{stability} of synchronisation is precisely a strengthened form of the same Theorem~4.6. Nonetheless, this actually also turns out to be fairly straightforward to prove (see Theorem~2.2.14 of this paper) using our same ``key fact'' (applied now to the \emph{original} RDS, not the two-point motion) together with almost-everywhere asymptotic stability.
\\ \\
In the above, we have essentially laid out how \emph{our} result will be proved; only, slightly more work than this will be needed, due to our conditions being weaker than in [Bax91].

\section{Memoryless Random Dynamical Systems}

Let $\mathbb{T}^+$ denote either $\mathbb{N} \cup \{0\}$ or $[0,\infty)$. Let $(X,d)$ be a compact metric space. Given a filtration $(\mathcal{F}_t)_{t \in \mathbb{T}^+}$ of $\sigma$-algebras on a set $\Omega$, we write $\mathcal{F}_\infty:=\sigma(\mathcal{F}_t:t \in \mathbb{T}^+)$.

\subsection{Basic definitions}

\begin{defi}
A \emph{dynamical system} $(\theta^t)_{t \in \mathbb{T}^+}$ on a measurable space $(\Omega,\mathcal{F})$ is a $\mathbb{T}^+$-indexed family of measurable maps $\theta^t:\Omega \to \Omega$ such that $\theta^0$ is the identity function and $\theta^{s+t}=\theta^s \circ \theta^t$ for all $s,t, \in \mathbb{T}^+$.
\end{defi}

\noindent Given a dynamical system $(\theta^t)$ and a set $E \subset \Omega$, for any $t \in \mathbb{T}^+$ we write $\theta^{-t}(E)$ to denote the preimage of $E$ under $\theta^t$.

\begin{defi}
A \emph{shift dynamical system} (or \emph{filtered dynamical system}) on a filtered measurable space $(\Omega,\mathcal{F},(\mathcal{F}_t)_{t \in \mathbb{T}^+})$ is a dynamical system $(\theta^t)$ on the measurable space $(\Omega,\mathcal{F})$ with the property that for all $s,t \in \mathbb{T}^+$, $\theta^t$ serves as a measurable function from $(\Omega,\mathcal{F}_{s+t})$ to $(\Omega,\mathcal{F}_s)$.
\end{defi}

\noindent Note that if $(\theta^t)$ is a shift dynamical system on $(\Omega,\mathcal{F},(\mathcal{F}_t))$, then $(\theta^t)$ can also be regarded as a dynamical system on the measurable space $(\Omega,\mathcal{F}_\infty)$.

\begin{defi}
Given a dynamical system $(\theta^t)$ on a measurable space $(\Omega,\mathcal{F})$, a probability measure $\mathbb{P}$ on $\Omega$ is said to be \emph{invariant under $(\theta^t)$} if $\theta^t_\ast\mathbb{P}=\mathbb{P}$ for all $t \in \mathbb{T}^+$.
\end{defi}

\begin{defi}
Given a dynamical system $(\theta^t)$ on a measurable space $(\Omega,\mathcal{F})$, a set $E \in \mathcal{F}$ is said to be \emph{forward-invariant} (resp. \emph{backward-invariant}) \emph{under $(\theta^t)$} if $\theta^t(E) \subset E$ (resp. $\theta^{-t}(E) \subset E$) for all $t \in \mathbb{T}^+$. Given a $(\theta^t)$-invariant probability measure $\mathbb{P}$, a set $E \in \mathcal{F}$ is said to be \emph{$\mathbb{P}$-almost invariant under $(\theta^t)$} if $\,\mathbb{P}(E \setminus \theta^{-t}(E))=0$, or equivalently $\mathbb{P}(\theta^{-t}(E) \setminus E)=0$, for all $t \in \mathbb{T}^+$.
\end{defi}

\noindent Note that a set $E \in \mathcal{F}$ is forward-invariant (resp. $\mathbb{P}$-almost invariant) if and only if its complement $\Omega \setminus E$ is backward-invariant (resp. $\mathbb{P}$-almost invariant).

\begin{defi}
Given a dynamical system $(\theta^t)$ on a measurable space $(\Omega,\mathcal{F})$, a probability measure $\mathbb{P}$ on $\Omega$ is said to be \emph{ergodic with respect to $(\theta^t)$} if $\mathbb{P}$ is invariant and for every $\mathbb{P}$-almost invariant set $E \in \mathcal{F}$ under $(\theta^t)$, either $\mathbb{P}(E)=0$ or $\mathbb{P}(E)=1$. Equivalently (as in Proposition~7.2.4 of [FM10]), $\mathbb{P}$ is ergodic if and only if it is an extremal point of the convex set of invariant probability measures.
\end{defi}

\noindent Note that for a measurable map $T:\Omega \to \Omega$, a probability measure $\mathbb{P}$ is invariant under the discrete-time dynamical system $(T^n)_{n=0}^\infty$ if and only if $T_\ast\mathbb{P}=\mathbb{P}$. We will say that $\mathbb{P}$ is invariant under (resp.~ergodic with respect to) $T$ if it is invariant under (resp.~ergodic with respect to) $(T^n)_{n=0}^\infty$.

\begin{defi}
A (\emph{stationary}) \emph{noise space} $(\Omega,\mathcal{F},(\mathcal{F}_t),\mathbb{P},(\theta^t))$ consists of a shift dynamical system $(\theta^t)$ on a filtered measurable space $(\Omega,\mathcal{F},(\mathcal{F}_t))$, together with a $(\theta^t)$-invariant probability measure $\mathbb{P}$. We will say that the noise space $(\Omega,\mathcal{F},(\mathcal{F}_t),\mathbb{P},(\theta^t))$ is \emph{memoryless} if for all $s \in \mathbb{T}^+$ the $\sigma$-algebras $\mathcal{F}_s$ and $\theta^{-s}\mathcal{F}_\infty$ are independent under $\mathbb{P}$.
\end{defi}

\begin{rmk}
It is easy to show that for a memoryless noise space $(\Omega,\mathcal{F},(\mathcal{F}_t),\mathbb{P},(\theta^t))$, the restricted probability measure $\mathbb{P}|_{\mathcal{F}_\infty}$ is ergodic with respect to $\theta^t$ for all $t \in \mathbb{T}^+ \setminus \{0\}$ (and hence, in particular, is ergodic with respect to $(\theta^t)_{t \in \mathbb{T}^+}$). See Corollary~133 of [New15] for a proof.
\end{rmk}

\begin{defi}
Given a noise space $(\Omega,\mathcal{F},(\mathcal{F}_t),\mathbb{P},(\theta^t))$, a (\emph{filtered, c\`{a}dl\`{a}g}) \emph{random dynamical system $\,\varphi = \left(\varphi(t,\omega)\right)_{t \in \mathbb{T}^+ \! , \, \omega \in \Omega}\,$ on $X$} is a $(\mathbb{T}^+ \! \times \Omega)$-indexed family of continuous functions $\varphi(t,\omega):X \to X$ such that
\begin{enumerate}[\indent (a)]
\item the map $(\omega,x) \mapsto \varphi(t,\omega)x$ is $(\mathcal{F}_t \otimes \mathcal{B}(X))$-measurable for each $t \in \mathbb{T}^+$;
\item $\varphi(0,\omega)$ is the identity function for all $\omega \in \Omega$;
\item $\varphi(s+t,\omega) \, = \, \varphi(t,\theta^s\omega) \circ \varphi(s,\omega)\,$ for all $s,t \in \mathbb{T}^+$ and $\omega \in \Omega$;
\item for any decreasing sequence $(t_n)$ in $\mathbb{T}^+$ converging to a value $t$, and any sequence $(x_n)$ in $X$ converging to a point $x$, $\,\varphi(t_n,\omega)x_n \to \varphi(t,\omega)x\,$ as $n \to \infty$ for all $\omega \in \Omega$;
\item there exists a function $\varphi_-:\mathbb{T}^+ \times \Omega \times X \to X$ such that for any strictly increasing sequence $(t_n)$ in $\mathbb{T}^+$ converging to a value $t$, and any sequence $(x_n)$ in $X$ converging to a point $x$, $\,\varphi(t_n,\omega)x_n \to \varphi_-(t,\omega,x)\,$ as $n \to \infty$ for all $\omega \in \Omega$.
\end{enumerate}
\end{defi}

\noindent Condition (a) is the condition of being $(\mathcal{F}_t)$-filtered; conditions (b) and (c) constitute the so-called ``cocyle property'', and represent temporal consistency (like the ``flow equations'' for a deterministic dynamical system); conditions (d) and (e) make up the ``c\`{a}dl\`{a}g'' property, with (d) being right-continuity and (e) being left limits. (As it happens, the only point in this paper where the ``left limits'' property is directly used is in the Appendix: it is needed in order to guarantee that our definition of ``asymptotic stability'' implies Lyapunov stability.) Note that the map $(t,\omega,x) \mapsto \varphi(t,\omega,x)$ is jointly measurable (e.g.~by Lemma~16(B) of [New15]).
\\ \\
A ``memoryless RDS'' simply means a RDS defined over a memoryless noise space.
\\ \\
In the introduction, we assumed that it makes sense to regard the underlying noise process as having no beginning in time; mathematically, a two-sided-time noise process is represented by a noise space $(\Omega,\mathcal{F},(\mathcal{F}_t),\mathbb{P},(\theta^t))$ where for every $t \in \mathbb{T}^+$, $\theta^t:\Omega \to \Omega$ is a measurable isomorphism of $(\Omega,\mathcal{F})$.
\\ \\
\textbf{From now on, we will always work with a RDS $\varphi$ on $X$ defined over a memoryless noise space $(\Omega,\mathcal{F},(\mathcal{F}_t),\mathbb{P},(\theta^t))$.}

\begin{defi}
We define the \emph{two-point motion} $\varphi \! \times \! \varphi$ of $\varphi$ to be the $(\mathbb{T}^+ \! \times \Omega)$-indexed family of functions $\varphi \! \times \! \varphi(t,\omega):X \times X \to X \times X$ given by
\[ \varphi \! \times \! \varphi(t,\omega)(x,y) \ = \ (\varphi(t,\omega)x,\varphi(t,\omega)y). \]
\end{defi}

\noindent It is easy to show that $\varphi \! \times \! \varphi$ is a RDS on $X \times X$ over the noise space $(\Omega,\mathcal{F},(\mathcal{F}_t),\mathbb{P},(\theta^t))$.

\subsection{Invariance of sets}

For each $x \in X$ and $t \in \mathbb{T}^+$, define the probability measure $\varphi_x^t$ on $X$ by
\[ \varphi_x^t(A) \ = \ \mathbb{P}(\omega : \varphi(t,\omega)x \in A) \ = \ \mathbb{P}(\omega : \varphi(t,\theta^s\omega)x \in A) \hspace{4mm} \textrm{(for any $s$)}. \]
\noindent It is not hard to show that $(\varphi_x^t)_{x \in X \! , \, t \in \mathbb{T}^+}$ defines a Markovian family of transition probabilities on $X$, and that for all $y \in X$ the stochastic process $\left( \varphi(t,\cdot)y \right)_{t \in \mathbb{T}^+}$ is an $(\mathcal{F}_t)$-adapted Markov process with these transition probabilities. It is also easy to show, using the dominated convergence theorem, that the map $(t,x) \mapsto \varphi_x^t$ is jointly continuous in $x$ and right-continuous in $t$ (where the set of probability measures on $X$ is equipped with the narrow topology).

\begin{defi}
Given a point $x \in X$ and an open set $U \subset X$, we will say that $U$ is \emph{accessible from $x$} (\emph{under $\varphi$}) if
\[ \mathbb{P}( \, \omega \, : \, \exists \, t \in \mathbb{T}^+ \textrm{ s.t. } \varphi(t,\omega)x \in U) \ > \ 0.\]
\end{defi}

\noindent By considering rational times (and using the right-continuity of trajectories of $\varphi$), it is clear that $U$ is accessible from $x$ if and only if there exists $t \in \mathbb{T}^+$ such that $\varphi_x^t(U)>0$. Now given a point $x \in X$, we have (by the second-countability of $X$) that an arbitrary union of open sets that are not accessible from $x$ is itself not accessible from $x$. So let $U_x$ denote the largest open set that is not accessible from $x$.

\begin{defi}
We will say that a closed set $K \subset X$ is \emph{forward-invariant} (\emph{under $\varphi$}) if
\[ \mathbb{P}( \, \omega \, : \, \forall \, t \in \mathbb{T}^+, \, \varphi(t,\omega)K \subset K) \ = \ 1 \, ; \]
\noindent and we will say that an open set $U \subset X$ is \emph{backward-invariant} if $X \setminus U$ is forward-invariant.
\end{defi}

\noindent By considering rational times and a countable dense subset of $K$ (and using the continuity properties of $\varphi$), it is easy to see that $K$ is forward-invariant if and only if $\varphi_x^t(K)=1$ for all $x \in K$ and $t \in \mathbb{T}^+$. In particular, it follows that $K$ is forward-invariant if and only if for all $x \in K$, $X \setminus K$ is not accessible from $x$.
\\ \\
By the second-countability of $X$, an arbitrary intersection of closed forward-invariant sets is itself forward-invariant. (In fact, the set of open backward-invariant sets forms a topology on $X$.) So, for any $x \in X$, let $G_x$ be the smallest closed forward-invariant set containing $x$. It is obvious that $X \setminus G_x$ is \emph{contained} in $U_x\hspace{0.2mm}$; but due to continuity, we actually have \emph{equality}:

\begin{lemma}
For any $x \in X$, $X \setminus G_x \ = \ U_x$.
\end{lemma}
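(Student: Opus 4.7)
The easy inclusion is $X \setminus G_x \subset U_x$: since $G_x$ is closed and forward-invariant under $\varphi$ and contains $x$, the remark following Definition~1.2.3 gives $\varphi_x^t(G_x) = 1$ for all $t$, so $X \setminus G_x$ is an open set that is not accessible from $x$; by maximality of $U_x$, this forces $X \setminus G_x \subset U_x$.

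For the reverse inclusion $U_x \subset X \setminus G_x$, the plan is to show that $X \setminus U_x$ is a closed forward-invariant set containing $x$, and then invoke the minimality of $G_x$. Closedness is immediate from the definition of $U_x$ as an open set, and $x \in X \setminus U_x$ holds because $\varphi_x^0 = \delta_x$ makes every open neighbourhood of $x$ trivially accessible from $x$, so no such neighbourhood can lie in $U_x$. Forward-invariance amounts, by the remark following Definition~1.2.3, to the statement that $\varphi_y^t(U_x) = 0$ for every $y \in X \setminus U_x$ and every $t \in \mathbb{T}^+$.

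I would prove this last statement by contradiction. Suppose $\varphi_y^t(U_x) > 0$ for some $y \notin U_x$ and some $t$. The joint continuity of $(s,x) \mapsto \varphi_x^s$ noted before Definition~1.2.1, combined with the portmanteau theorem applied to the open set $U_x$, shows that $y' \mapsto \varphi_{y'}^t(U_x)$ is lower semicontinuous; hence
\[ V \; := \; \bigcup_{n \in \mathbb{N}} \left\{ y' \in X \, : \, \varphi_{y'}^t(U_x) > \tfrac{1}{n} \right\} \]
is open and contains $y$. Since $y \notin U_x$, the open set $V \cup U_x$ strictly contains $U_x$, so by maximality of $U_x$ it must be accessible from $x$: there exists $s$ with $\varphi_x^s(V \cup U_x) > 0$. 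The case $\varphi_x^s(U_x) > 0$ directly contradicts non-accessibility of $U_x$ from $x$, so $\varphi_x^s(V) > 0$, and by countable additivity some level set $\{\varphi_{\cdot}^t(U_x) > \tfrac{1}{n}\}$ has positive $\varphi_x^s$-measure. Chapman--Kolmogorov then gives
\[ \varphi_x^{s+t}(U_x) \; = \; \int_X \varphi_{y'}^t(U_x) \, d\varphi_x^s(y') \; \geq \; \tfrac{1}{n} \, \varphi_x^s\!\left( \left\{ \varphi_{\cdot}^t(U_x) > \tfrac{1}{n} \right\} \right) \; > \; 0, \]
contradicting non-accessibility of $U_x$ from $x$ and completing the argument.

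The main subtlety I anticipate is the step where one must upgrade ``$\varphi_y^t(U_x) > 0$'' to positivity on an \emph{open} set of starting points: a naive Chapman--Kolmogorov estimate $\int_W \varphi_{y'}^t(U_x) \, d\varphi_x^s(y')$ over a mere neighbourhood $W$ of $y$ might vanish if the integrand degenerates. The cleanest fix, as above, is to pass to level sets of the lower semicontinuous function $y' \mapsto \varphi_{y'}^t(U_x)$, so that one has a \emph{uniform} lower bound on the integrand over the set being integrated against. Everything else is a routine application of the Markov property, the maximality of $U_x$, and the minimality of $G_x$.
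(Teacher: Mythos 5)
Your argument is correct and follows essentially the same route as the paper: semicontinuity of $\xi \mapsto \varphi_\xi^t(U_x)$, maximality of $U_x$, and Chapman--Kolmogorov, all aimed at showing $U_x$ is backward-invariant. The one place where you diverge is the level-set detour at the end, and the ``subtlety'' you flag there is not actually a subtlety: once you have an open set $V$ containing $y$ on which $\varphi_\xi^t(U_x)>0$ pointwise, and $\varphi_x^s(V)>0$, the integral $\int_V \varphi_\xi^t(U_x)\,\varphi_x^s(d\xi)$ is automatically strictly positive---a nonnegative function that is strictly positive on a set of positive measure has a positive integral, no uniform lower bound required. The paper's proof uses exactly this observation and so is a touch shorter; your level-set version reaches the same conclusion but carries machinery it does not need.
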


\begin{proof}
We need to show that $U_x \, \subset \, X \setminus G_x$, for which it is sufficient to show that $U_x$ is backward-invariant. So fix any $y \in X \setminus U_x$, and suppose for a contradiction that there exists $t \in \mathbb{T}^+$ such that $\varphi_y^t(U_x)>0$. Since $U_x$ is open, the map $\xi \mapsto \varphi_\xi^t(U_x)$ is lower semicontinuous, and so there exists a neighbourhood $V$ of $y$ such that $\varphi_\xi^t(U_x)>0$ for all $\xi \in V$. Since $y \nin U_x$, $V$ is accessible from $x$, so there exists $s \in \mathbb{T}^+$ such that $P_x^s(V)>0$. Hence
\[ \varphi_x^{s+t}(U_x) \ = \ \int_X \varphi_\xi^t(U_x) \, \varphi_x^s(d\xi) \ \geq \ \int_V \varphi_\xi^t(U_x) \, \varphi_x^s(d\xi) \ > \ 0 \, , \]
\noindent contradicting the fact that $U_x$ is not accessible from $x$.
\end{proof}

\noindent Just as $G_x \subset X$ denotes the smallest forward-invariant closed set under $\varphi$ containing $x$, so we will also write $G_{(x,y)} \subset X \times X$ to denote the smallest forward-invariant closed set under $\varphi \! \times \! \varphi$ containing $(x,y)$.

\begin{lemma}
For any $x,y \in X$, the image of $G_{(x,y)}$ under the projection $(u,v) \mapsto u$ (resp.~$(u,v) \mapsto v$) is $G_x$ (resp.~$G_y$).
\end{lemma}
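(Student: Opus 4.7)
My plan is to prove the two set-inclusions in turn, using the convenient equivalent formulation of forward-invariance noted after Definition~1.2.3: a closed set $K$ is forward-invariant under $\varphi$ iff $\varphi_u^t(K)=1$ for every $u\in K$ and $t\in\mathbb{T}^+$, and analogously for $\varphi\!\times\!\varphi$. I will write $\pi_1$ for the projection $(u,v)\mapsto u$; the argument for $\pi_2$ will be identical. Set $A := \pi_1(G_{(x,y)})$. Since $G_{(x,y)}$ is a closed subset of the compact space $X\times X$, it is compact, so $A$ is compact (hence closed in $X$), and clearly $x=\pi_1(x,y)\in A$.

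To see $G_x \subset A$, I will check that $A$ is forward-invariant under $\varphi$. Fix $u\in A$ and $t\in\mathbb{T}^+$, and pick any $v\in X$ with $(u,v)\in G_{(x,y)}$. By forward-invariance of $G_{(x,y)}$ under $\varphi\!\times\!\varphi$, with full probability $\varphi\!\times\!\varphi(t,\omega)(u,v)\in G_{(x,y)}$, and projecting under $\pi_1$ gives $\varphi(t,\omega)u\in A$ with full probability. So $\varphi_u^t(A)=1$ for all $u\in A$ and $t\in\mathbb{T}^+$, which establishes forward-invariance. Closedness of $A$ and the fact that $x\in A$ then force $G_x\subset A$ by minimality of $G_x$.

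For the reverse inclusion, I will use that $G_x\times X$ is a closed subset of $X\times X$ containing $(x,y)$, and check it is forward-invariant under $\varphi\!\times\!\varphi$. For any $(u,v)\in G_x\times X$ and any $t\in\mathbb{T}^+$, forward-invariance of $G_x$ under $\varphi$ gives $\varphi_u^t(G_x)=1$, so
\[ (\varphi\!\times\!\varphi)_{(u,v)}^t(G_x\times X)\;\geq\;\mathbb{P}\bigl(\omega:\varphi(t,\omega)u\in G_x\bigr)\;=\;1. \]
By minimality of $G_{(x,y)}$, we conclude $G_{(x,y)}\subset G_x\times X$, whence $A=\pi_1(G_{(x,y)})\subset G_x$.

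Neither step looks like a serious obstacle. The only point that needs a moment's care is the forward-invariance of $A$: one must argue per-$(u,t)$ via the ``$\varphi_u^t(A)=1$'' formulation rather than trying to use the quantifier-over-all-$t$ formulation, since at any given $\omega$ not every $u\in A$ need admit a witness $v$ with $(u,v)$ lying in the (possibly $\omega$-dependent) relevant sub-cocycle trajectory. Once one uses the equivalent pointwise-in-$(u,t)$ characterisation together with the observation that measurable projections of probability-one events retain probability one, the argument is immediate.
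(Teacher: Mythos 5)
Your proof is correct, and it handles the two inclusions by two slightly different routes than the paper. For $G_x \subset A$ you do essentially what the paper does: show $A:=\pi_1(G_{(x,y)})$ is closed, contains $x$, and is forward-invariant by lifting each $u \in A$ to some $(u,v)\in G_{(x,y)}$ and projecting. For $A \subset G_x$, however, the paper argues ``downward'': it shows that every proper closed subset $B\subsetneq A$ containing $x$ fails to be forward-invariant, by invoking Lemma~1.2.3 applied to $\varphi\!\times\!\varphi$ (i.e.\ the complement of $G_{(x,y)}$ is exactly the set of points inaccessible from $(x,y)$, so $(X\setminus B)\times X$ is accessible). You instead argue ``upward'': you verify that $G_x\times X$ is a closed forward-invariant set containing $(x,y)$, so $G_{(x,y)}\subset G_x\times X$, and project. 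Your version is a little cleaner in that it avoids citing Lemma~1.2.3 altogether and uses only the bare minimality of $G_{(x,y)}$; the paper's version is more symmetric with the first half (both halves are accessibility arguments). One small remark on your closing paragraph: the caution about the ``quantifier-over-all-$t$'' formulation is not actually needed. The witness $v$ with $(u,v)\in G_{(x,y)}$ is chosen once per $u$ and is independent of $\omega$ and $t$, so on the full-measure event $\{\omega:\forall t,\ \varphi\!\times\!\varphi(t,\omega)G_{(x,y)}\subset G_{(x,y)}\}$ one immediately gets $\varphi(t,\omega)u\in A$ for all $t$; the $\omega$-uniform formulation would have worked just as well.
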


\begin{proof}
Let $A_{x,y} \subset X$ be the image of $G{(x,y)}$ under $(u,v) \mapsto u$. $A_{x,y}$ is a closed set, since $G_{(x,y)}$ is compact. Also $A_{x,y}$ is forward-invariant: for any $u \in A_{x,y}$, if we let $v$ be such that $(u,v) \in G_{(x,y)}$, then $(X \setminus A_{x,y}) \times X$ is not accessible from $(u,v)$, and so $X \setminus A_{x,y}$ is not accessible from $u$. Finally, if $B$ is a closed proper subset of $A_{x,y}$ with $x \in B$, then $B$ is not forward-invariant: $(X \setminus B) \times X$ is accessible from $(x,y)$ by Lemma~1.2.3 (applied to $\varphi \! \times \! \varphi$), and so $X \setminus B$ is accessible from $x$. Thus we have shown that $A_{x,y}$ is the smallest closed forward-invariant set containing $x$, as required.
\end{proof}

\begin{defi}
We will say that a set $K \subset X$ is \emph{minimal} (\emph{under $\varphi$}), or that $\varphi$ \emph{has minimal dynamics on $K$}, if $K$ is a non-empty closed forward-invariant set and the only closed forward-invariant subsets of $K$ are $K$ and $\emptyset$.
\end{defi}

\noindent Note that for any non-empty closed $K \subset X$ the following are equivalent:
\begin{itemize}
\item $K$ is minimal;
\item $K$ is a minimal element (with respect to set-inclusion) of the set of non-empty closed forward-invariant subsets of $X$;
\item for all $x \in K$, $G_x=K$;
\item $K$ is forward-invariant, and for any $x \in K$ and any open set $U \subset X$ with $U \cap K \neq \emptyset$, $U$ is accessible from $x$.
\end{itemize}

\noindent The following is broadly based on \textrm{[}KH95\textrm, solution to Exercise~3.3.4 (p768){]}:

\begin{prop}
Every non-empty closed forward-invariant set contains a minimal set.
\end{prop}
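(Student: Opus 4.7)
The plan is to apply Zorn's lemma (ordered by reverse inclusion) to the collection of non-empty closed forward-invariant subsets of the given set, and deduce that a minimal element exists; this minimal element, by definition, will be a minimal set in the sense of Definition~1.2.5.

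First I would fix a non-empty closed forward-invariant set $K \subset X$ and let
\[ \mathcal{M} \ := \ \{\, L \subset K \,:\, L \text{ is non-empty, closed, and forward-invariant under } \varphi \,\}, \]
partially ordered by set-inclusion. Since $K \in \mathcal{M}$, the collection is non-empty. To apply Zorn's lemma, I need to show that every totally ordered chain $\mathcal{C} \subset \mathcal{M}$ admits a lower bound in $\mathcal{M}$, and the natural candidate is $L_\infty := \bigcap_{L \in \mathcal{C}} L$.

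The three things to verify for $L_\infty$ are: (i) closedness, (ii) non-emptiness, and (iii) forward-invariance. Closedness is immediate since $L_\infty$ is an intersection of closed sets. Non-emptiness follows from compactness of $X$ together with the finite intersection property: any finite subfamily of $\mathcal{C}$ has a smallest member (as $\mathcal{C}$ is totally ordered), and this smallest member is non-empty by hypothesis, so the whole intersection $L_\infty$ is non-empty by compactness. Forward-invariance is essentially the observation already recorded in the paragraph after Definition~1.2.4 (justified by second-countability of $X$): an arbitrary intersection of closed forward-invariant sets is itself forward-invariant. Hence $L_\infty \in \mathcal{M}$ and serves as a lower bound for $\mathcal{C}$.

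Zorn's lemma (applied to $\mathcal{M}$ ordered by reverse inclusion, where lower bounds in set-inclusion correspond to upper bounds in the reverse order) then yields a minimal element $M \in \mathcal{M}$, i.e.\ a set $M \subset K$ that is non-empty, closed, and forward-invariant, with no proper non-empty closed forward-invariant subset. This is exactly the definition of a minimal set, so $K$ contains a minimal set as claimed. There is no real obstacle here; the only technical point worth highlighting is the use of compactness of $X$ to secure non-emptiness of intersections along chains, which is what makes the Zorn argument go through.
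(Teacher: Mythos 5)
Your proof is correct, and it takes a genuinely different route from the paper's. You apply Zorn's lemma: order the non-empty closed forward-invariant subsets of $K$ by reverse inclusion, check that the intersection of a chain remains in the family (closedness is trivial, non-emptiness comes from compactness via the finite-intersection property, and forward-invariance under arbitrary intersections is exactly the observation recorded after Definition~1.2.4), and extract a minimal element. This is the classical Birkhoff-style argument for existence of minimal sets, and it works in any compact Hausdorff phase space. The paper instead avoids Zorn's lemma: it constructs a countable nested sequence $C \supset M_1 \supset M_2 \supset \cdots$ of non-empty closed forward-invariant sets chosen ``greedily'' so that $d_H(M_n,M_{n+1})$ nearly realises the supremum $m(M_n)$ of Hausdorff drops available from $M_n$; total boundedness of $X$ forces $d_H(M_n,M_{n+1}) \to 0$, hence $m(M_n) \to 0$, and monotonicity of $m(\cdot)$ then shows that the Cantor intersection $\bigcap_n M_n$ satisfies $m=0$, i.e.\ is minimal. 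The trade-off is that the paper's argument uses only a countable dependent choice and exploits the compact \emph{metric} structure, whereas your Zorn argument is shorter, more familiar, and generalises beyond metric spaces, at the cost of invoking the full strength of the axiom of choice. Both are sound proofs of the proposition.
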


\begin{proof}
Fix a non-empty closed forward-invariant set $C \subset X$. For any non-empty closed forward-invariant $M \subset C$, let
\[ m(M) \ := \ \sup\{d_H(M,G) \, : \, G \neq \emptyset, \, G \subset M \textrm{ closed forward-invariant}\} \]
\noindent where $d_H$ denotes the Hausdorff (semi-)distance. Let $C \supset M_1 \supset M_2 \supset M_3 \supset \ldots$ be a nested sequence of non-empty closed forward-invariant sets such that $d_H(M_n,M_{n+1}) \geq \frac{n}{n+1}m(M_n)$ for all $n \in \mathbb{N}$. Cantor's intersection theorem gives that $\,K:=\bigcap_{n=1}^\infty M_n\,$ is non-empty. Now since $X$ is totally bounded, we must have that $d_H(M_n,M_{n+1}) \to 0$ as $n \to \infty$, and so $m(M_n) \to 0$ as $n \to \infty$. It is clear that $m(\cdot)$ is monotone, so it follows that $m(K)=0$. Hence $K$ is minimal.
\end{proof}

\noindent (Incidentally, in the above proof, one can use the continuity of $\varphi$ to show that the supremum in the formula for $m(M)$ is actually a maximum, and hence the prefactors $\frac{n}{n+1}$ can be removed; however, it is much quicker just to do as we have done.)
\\ \\
It follows from Proposition~1.2.6 that if there is a unique minimal set $K \subset X$, then every non-empty closed forward-invariant set contains $K$.

\begin{lemma}
Suppose $K \subset X$ is a closed set possessing no non-empty closed forward-invariant subsets. Then for any $x \in X$, for $\mathbb{P}$-almost every $\omega \in \Omega$ there is an unbounded sequence $(t_n)_{n \in \mathbb{N}}$ in $\mathbb{T}^+$ such that for all $n \in \mathbb{N}$, $\varphi(t_n,\omega)x \nin K$.
\end{lemma}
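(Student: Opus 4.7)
The plan is to argue by contrapositive: I would show that if the conclusion fails for some $x \in X$, then $K$ must contain a non-empty closed forward-invariant subset, contradicting the hypothesis. The key object is the ``survival function''
\[ g(y) \ := \ \mathbb{P}(\omega : \varphi(t,\omega)y \in K \textrm{ for all } t \in \mathbb{T}^+), \]
which vanishes off $K$ (since $\varphi(0,\omega)y = y$).

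First I would establish two structural properties of $g$. It is upper semi-continuous: by right-continuity of $\varphi$ in $t$ together with the closedness of $K$, the event in the definition of $g$ is equivalent to requiring $\varphi(t,\omega)y \in K$ only at $t$ in some countable dense subset of $\mathbb{T}^+$, so $g$ is a countable infimum of the maps $y \mapsto \mathbb{P}(\varphi(t,\cdot)y \in K)$; each of these is upper semi-continuous by reverse Fatou, using continuity of $\varphi(t,\omega)$ in space and closedness of $K$. Secondly, $g$ is \emph{sub-invariant} in the Markovian sense: from the cocycle identity $\varphi(s+t,\omega)=\varphi(s,\theta^t\omega)\circ\varphi(t,\omega)$, the memorylessness (independence of $\mathcal{F}_t$ and $\theta^{-t}\mathcal{F}_\infty$ under $\mathbb{P}$) and the $\theta^t$-invariance of $\mathbb{P}$, one obtains
\[ g(y) \ \leq \ \mathbb{P}(\varphi(s+t,\omega)y \in K \textrm{ for all } s \geq 0) \ = \ \int_X g \, d\varphi_y^t \]
for every $y \in X$ and every $t \in \mathbb{T}^+$.

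With these in hand, the contradiction assembles quickly. If the conclusion fails for some $x \in X$, then by countable additivity there exists $T \in \mathbb{T}^+$ such that the event ``$\varphi(\,\cdot\,,\omega)x$ stays in $K$ from time $T$ onward'' has positive probability; the same Markov-property calculation shows this probability equals $\int g \, d\varphi_x^T$, so $c^\ast := \sup_{y \in X} g(y) > 0$. The set $K^\ast := \{y \in X : g(y) = c^\ast\}$ is then non-empty (an upper semi-continuous function attains its supremum on the compact space $X$), closed (upper semi-continuity), and contained in $K$ (as $g=0$ off $K$). For forward-invariance, note that for any $y \in K^\ast$, the chain $c^\ast = g(y) \leq \int g \, d\varphi_y^t \leq c^\ast$ forces equality throughout, so $g=c^\ast$ holds $\varphi_y^t$-almost surely, i.e.\ $\varphi_y^t(K^\ast)=1$; by the equivalent characterisation of forward-invariance recorded after Definition~1.2.4, $K^\ast$ is forward-invariant, contradicting the hypothesis on $K$.

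The main obstacle, such as it is, lies in setting up the Markov-property step cleanly within the filtered-cocycle formulation of Section~1.1: one needs that, for each fixed $y$, the event ``$\varphi(s,\theta^t\omega)y \in K$ for all $s$'' is genuinely $\theta^{-t}\mathcal{F}_\infty$-measurable as a function of $\omega$, so that the independence of $\mathcal{F}_t$ and $\theta^{-t}\mathcal{F}_\infty$ can legitimately convert ``conditioning on the random point $\varphi(t,\omega)y$'' into integration against the measure $\varphi_y^t$. Once this bookkeeping is done, the remaining computations — upper semi-continuity, sub-invariance, and extraction of the set $K^\ast$ — are routine.
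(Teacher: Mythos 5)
Your proof is correct in substance, but it takes a genuinely different route from the paper's. The paper's argument is direct and quantitative: it considers the function $l(y) = \inf_{t \in D} \varphi_y^t(K)$, uses the hypothesis on $K$ together with upper semicontinuity and compactness to deduce that $l$ is bounded away from $1$ on $K$ (by some $c < 1$), builds a measurable ``escape time'' $\tau$, and then iterates the Markov property along the random times $T_0 < T_1 < \ldots$ to show that the probability of remaining in $K$ at all of $T_0,\ldots,T_n$ decays geometrically like $c^n$. Yours is a soft contradiction argument: you introduce the survival function $g(y) = \mathbb{P}(\varphi(t,\cdot)y \in K \ \forall t)$, establish that it is upper semicontinuous and sub-invariant ($g(y) \leq \int_X g \, d\varphi_y^t$), and then apply a maximum-principle argument to the level set $K^\ast = \{g = \sup g\}$, showing that if $\sup g > 0$ then $K^\ast$ is a non-empty closed forward-invariant subset of $K$. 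What the paper's route buys is an explicit geometric rate, which (as the paper's footnote notes, and as Baxendale--Stroock exploit) can be upgraded to a statement with a uniform time horizon $[0,T]$; your route is shorter and cleaner but gives no rate, and it also delivers a slightly different piece of structure as a by-product (a forward-invariant level set of the survival function rather than a forward-invariant $G_y$).

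There is one slip worth correcting. You write that $g$ ``is a countable infimum of the maps $y \mapsto \mathbb{P}(\varphi(t,\cdot)y \in K)$''; that identity is false, since $\mathbb{P}\bigl(\bigcap_{t \in D} A_t\bigr)$ need not equal $\inf_{t \in D} \mathbb{P}(A_t)$. What is true, and what you actually need, is that $g$ is the decreasing limit (hence infimum) of the maps $y \mapsto \mathbb{P}\bigl(\bigcap_{t \in F} \{\varphi(t,\cdot)y \in K\}\bigr)$ as $F$ ranges over an increasing exhaustion of $D$ by finite sets; each such map is upper semicontinuous by the same reverse-Fatou argument you describe, and the decreasing pointwise limit of upper semicontinuous functions is upper semicontinuous. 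The conclusion that $g$ is upper semicontinuous therefore stands, but the justification should be routed through the finite-intersection probabilities rather than the single-time ones. (Incidentally, the function $\inf_{t \in D} \mathbb{P}(\varphi(t,\cdot)y \in K)$ that you inadvertently described is exactly the paper's $l(y)$, which dominates your $g(y)$ pointwise.)
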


\noindent The proof of Lemma~1.2.7 is essentially the same as the proof of Proposition~4.1 of [BS88]. We will use the following fact, which is sufficiently clear that we do not write out a proof, but is nonetheless worth stating explicitly:

\begin{lemma}
Let $(M_t)_{t \in \mathbb{T}^+}$ be an $(\mathcal{F}_t)$-adapted $X$-valued homogeneous Markov process with transition probabilities $(P_x^t)_{x \in X \! , \, t \in \mathbb{T}^+}$. Fix $t \in \mathbb{T}^+$, let $D$ be a countable subset of $\mathbb{T}^+$, and let $T:\Omega \to D$ be an $\mathcal{F}_t$-measurable function. Then
\[ \mathbb{E}[\mathbbm{1}_A(M_{t+T})|\mathcal{F}_t] \ = \ P_{M_t}^T(A) \]
\noindent for all $A \in \mathcal{B}(X)$.
\end{lemma}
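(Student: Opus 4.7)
The plan is to reduce the assertion to the ordinary (fixed-time) Markov property by partitioning $\Omega$ according to the (countably many) values of $T$. Enumerate $D = \{s_1,s_2,\ldots\}$ and set $\Omega_i = \{T = s_i\}$. Since $T$ is $\mathcal{F}_t$-measurable, each $\Omega_i$ lies in $\mathcal{F}_t$, and these sets partition $\Omega$.

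First I would verify that the proposed right-hand side is $\mathcal{F}_t$-measurable. Both $T$ and $M_t$ are $\mathcal{F}_t$-measurable; for each fixed $s \in D$, the function $x \mapsto P_x^s(A)$ is Borel-measurable (a defining property of a Markov transition kernel). Hence
\[ \omega \,\longmapsto\, P_{M_t(\omega)}^{T(\omega)}(A) \;=\; \sum_{i=1}^\infty \mathbbm{1}_{\Omega_i}(\omega)\, P_{M_t(\omega)}^{s_i}(A) \]
is a countable sum of $\mathcal{F}_t$-measurable functions, hence $\mathcal{F}_t$-measurable. So it remains to check the defining integral identity.

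Next, for any $B \in \mathcal{F}_t$, I would compute
\[ \int_B \mathbbm{1}_A(M_{t+T})\, d\mathbb{P} \;=\; \sum_{i=1}^\infty \int_{B \cap \Omega_i} \mathbbm{1}_A(M_{t+s_i})\, d\mathbb{P}. \]
Since $B \cap \Omega_i \in \mathcal{F}_t$ and $M_{t+s_i}$ is the value at time $t+s_i$ of an $(\mathcal{F}_t)$-adapted homogeneous Markov process with transition kernels $(P_x^s)$, the ordinary Markov property at the deterministic time $s_i$ yields $\mathbb{E}[\mathbbm{1}_A(M_{t+s_i})\mid \mathcal{F}_t] = P_{M_t}^{s_i}(A)$, and therefore
\[ \int_{B \cap \Omega_i} \mathbbm{1}_A(M_{t+s_i})\, d\mathbb{P} \;=\; \int_{B \cap \Omega_i} P_{M_t}^{s_i}(A)\, d\mathbb{P} \;=\; \int_{B \cap \Omega_i} P_{M_t}^{T}(A)\, d\mathbb{P}, \]
where the last equality holds because $T = s_i$ on $\Omega_i$. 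Summing over $i$ gives $\int_B \mathbbm{1}_A(M_{t+T})\,d\mathbb{P} = \int_B P_{M_t}^T(A)\,d\mathbb{P}$, which is precisely the definition of $\mathbb{E}[\mathbbm{1}_A(M_{t+T})\mid \mathcal{F}_t] = P_{M_t}^T(A)$.

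The only potential subtlety, and what I would call the main (though mild) obstacle, is the interchange of summation and integration: this is justified by Tonelli's theorem applied to the non-negative summands $\mathbbm{1}_{B \cap \Omega_i}\mathbbm{1}_A(M_{t+s_i})$, so no additional integrability hypothesis is required. Everything else is a routine partition-and-assemble argument, which is why the authors felt a detailed write-up was unnecessary.
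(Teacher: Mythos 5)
Your proof is correct. The paper deliberately omits a proof of this lemma, remarking that it is ``sufficiently clear that we do not write out a proof''; the partition-over-the-countable-range-of-$T$ argument you give---with the ordinary deterministic-time Markov property applied on each cell $\{T=s_i\}\in\mathcal{F}_t$, the observation that $x\mapsto P_x^{s_i}(A)$ is Borel so the right-hand side is $\mathcal{F}_t$-measurable, and Tonelli justifying the countable interchange---is precisely the routine argument the author had in mind when declining to write it out.
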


\begin{proof}[Proof of Lemma 1.2.7]
Let $D:=\mathbb{Q} \cap \mathbb{T}^+$. Fix $x \in X$ and let $M_t(\omega):=\varphi(t,\omega)x$ for all $t$ and $\omega$. For each $y \in K$, $G_y \setminus K$ is non-empty (otherwise $G_y$ would be a non-empty closed forward-invariant subset of $K$) and so there exists $t \in \mathbb{T}^+$ such that $\varphi_y^t(K) < 1$.\footnote{Since the map $y \mapsto \varphi_y^t(K)$ is upper semicontinuous and $K$ is compact, one can show that $t$ can be taken from a \emph{bounded} interval $[0,T]$ (where $T$ is independent of $y$). Consequently (as in Proposition~4.1 of [BS88]) one can obtain a stronger result than stated in Lemma~1.2.7; however, we will not need this.} Now the map $t \mapsto \varphi_y^t(K)$ is right upper semicontinuous for each $y \in X$; and so it follows that for each $y \in X$ there exists $t \in D$ such that $\varphi_y^t(K) < 1$. So, if we define a function $l:K \to [0,1]$ by
\[ l(y) \ := \ \inf_{t \in D} \varphi_y^t(K) \]
\noindent then $l$ is strictly less than 1 on the whole of $K$. Also note that $l$ is upper semicontinuous. Therefore $l$ has a maximum value $c'$, which is strictly less than 1. So, fixing an arbitrary value $c \in (c',1)$, we have that for all $y \in K$ there exists $t \in D$ such that $\varphi_y^t(K) \leq c\,$; in fact, it is easy to see that one can construct a \emph{measurable} function $\tau:K \to D$ such that $\varphi_y^{\tau(y)}(K) \leq c$ for all $y \in K$.\footnote{e.g.~if $(s_n)_{n \in \mathbb{N}}$ is an enumeration of $D$, set $\tau(y):=s_{N(y)}$ where $N(y):=\min\{n \in \mathbb{N}:\varphi_y^{s_n}(K)<1\}$.} We extend $\tau$ to the whole of $X$ by setting $\tau(y)=0$ for all $y \in X \! \setminus \! K$.
\\ \\
Now to obtain the desired result, it is sufficient just to show that for each $N \in \mathbb{N}$, for $\mathbb{P}$-almost every $\omega \in \Omega$ there exists $t \geq N$ such that $\varphi(t,\omega)x \nin K$. Fix any $N \in \mathbb{N}$, and define the sequence $\,(T_n)_{n \in \mathbb{N} \cup \{0\}}$ of functions $T_n:\Omega \to D$ by
\begin{align*}
T_0(\omega) \ &= \ N \\
T_n(\omega) \ &= \ T_{n-1}(\omega) + \tau(M_{T_{n-1}(\omega)}(\omega)) \hspace{4mm} (n \geq 1)
\end{align*}
\noindent for all $\omega \in \Omega$. For each $n \in \mathbb{N} \cup \{0\}$, let $E_n:=\{ \, \omega \in \Omega \, : \, M_{T_r(\omega)}(\omega) \in K \ \textrm{for all} \ 0 \leq r \leq n \, \}$. We will show by induction that for each $n \in \mathbb{N} \cup \{0\}$, $\,\mathbb{P}( E_n ) \, \leq \, c^n$. (Obviously, once we have shown this, we are done.) The $n=0$ case is trivial. Now fix $m \in \mathbb{N}$ such that $\,\mathbb{P}( E_{m-1} ) \, \leq \, c^{m-1}$. It is not hard to show that $E_{m-1} \cap T_{m-1}^{-1}(\{t\}) \in \mathcal{F}_t$ for all $t \in D$; so then,
\begin{align*}
\mathbb{P}(E_m) \ &= \ \int_{E_{m-1}} \mathbbm{1}_K(M_{T_m(\omega)}(\omega)) \, \mathbb{P}(d\omega) \\
&= \ \sum_{t \in D} \, \int_{E_{m-1} \cap T_{m-1}^{-1}(\{t\})} \!\! \mathbbm{1}_K(M_{t+\tau(M_t(\omega))}(\omega)) \, \mathbb{P}(d\omega) \\
&= \ \sum_{t \in D} \, \int_{E_{m-1} \cap T_{m-1}^{-1}(\{t\})} \varphi_{M_t(\omega)}^{\tau(M_t(\omega))}(K) \, \mathbb{P}(d\omega) \hspace{4mm} \textrm{(by Lemma 1.2.8)} \\
&\leq \ \sum_{t \in D} \, \int_{E_{m-1} \cap T_{m-1}^{-1}(\{t\})} c \ \mathbb{P}(d\omega) \\
&= \ c \, \mathbb{P}(E_{m-1}) \\
&\leq \ c^m.
\end{align*}
\noindent So we are done.
\end{proof}

\noindent The following corollary will essentially be the key ingredient in the proof of our main result.

\begin{cor}
Let $K$ be a minimal set, and let $U$ be an open set with $U \cap K \neq \emptyset$. Then for each $x \in K$, for $\mathbb{P}$-almost every $\omega \in \Omega$ there exist (arbitrarily large) times $t \in \mathbb{T}^+$ such that $\varphi(t,\omega)x \in U$. Moreover, if $K$ is the only minimal set, then for each $x \in X$, for $\mathbb{P}$-almost every $\omega \in \Omega$ there exist (arbitrarily large) times $t \in \mathbb{T}^+$ such that $\varphi(t,\omega)x \in U$.
\end{cor}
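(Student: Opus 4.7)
The plan is to reduce each assertion to Lemma~1.2.7 by choosing an appropriate closed set with no non-empty closed forward-invariant subsets, and then passing to the complement.

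For the first assertion, I would set $K' := K \setminus U$. Since $K$ is closed and $U$ is open, $K'$ is closed in $X$. Any non-empty closed forward-invariant subset $M \subset K'$ is also a non-empty closed forward-invariant subset of $K$, so by minimality of $K$ we would have $M = K$; but $K \cap U \neq \emptyset$ means $K \not\subset K \setminus U$, a contradiction. Hence $K'$ has no non-empty closed forward-invariant subsets. Applying Lemma~1.2.7 to $K'$ and any $x \in K$, we obtain, for $\mathbb{P}$-almost every $\omega$, an unbounded sequence $(t_n)$ with $\varphi(t_n,\omega)x \notin K \setminus U$. Since $K$ is forward-invariant and $x \in K$, for $\mathbb{P}$-almost every $\omega$ we also have $\varphi(t,\omega)x \in K$ for every $t \in \mathbb{T}^+$; intersecting these two full-measure events gives $\varphi(t_n,\omega)x \in U$ along an unbounded sequence.

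For the second assertion, I would set $K' := X \setminus U$. This is closed, and any non-empty closed forward-invariant subset $M \subset K'$ would, by Proposition~1.2.6, contain a minimal set, which (by uniqueness) must be $K$; hence $K \subset M \subset X \setminus U$, contradicting $K \cap U \neq \emptyset$. So again Lemma~1.2.7 applies to $K'$, and for every $x \in X$ it supplies, $\mathbb{P}$-almost surely, an unbounded sequence $(t_n)$ with $\varphi(t_n,\omega)x \notin X \setminus U$, i.e.\ $\varphi(t_n,\omega)x \in U$.

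The ``arbitrarily large'' aspect comes for free from the word \emph{unbounded} in the conclusion of Lemma~1.2.7, so there is essentially no obstacle beyond correctly identifying the set to feed into the lemma and, in the first assertion, remembering to intersect with the full-probability event on which the trajectory from $x \in K$ stays inside $K$ so that ``not in $K \setminus U$'' upgrades to ``in $U$''.
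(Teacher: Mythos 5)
Your proof is correct and follows exactly the same route as the paper: apply Lemma~1.2.7 to $K\setminus U$ (then intersect with the full-measure event on which the trajectory from $x\in K$ stays in $K$) for the first assertion, and to $X\setminus U$ for the second. The paper merely states these steps as "clear" without spelling out the minimality/uniqueness arguments, which you have correctly supplied.
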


\begin{proof}
It is clear that $K \setminus U$ is a closed set possessing no non-empty closed forward-invariant subsets; hence Lemma~1.2.7 combined with the forward-invariance of $K$ gives the first statement. If $K$ is the only minimal set, then it is clear that $X \setminus U$ is a closed set possessing no non-empty closed forward-invariant subsets, so Lemma~1.2.7 gives the second statement.
\end{proof}

\noindent Note that the times in Lemma~1.2.7 and Corollary~1.2.9 can be selected to belong to $\mathbb{Q}$. (This follows from the fact that the trajectories of $\varphi$ are right-continuous in time, but can also be seen directly within the proof of Lemma~1.2.7.)

\subsection{Stationary and ergodic measures}

\noindent For any $t \in \mathbb{T}^+$ we define the map $\Theta^t:\Omega \times X \to \Omega \times X$ by $\Theta^t(\omega,x)=(\theta^t\omega,\varphi(t,\omega)x)$. It is easy to show that $\Theta^{s+t}=\Theta^t \circ \Theta^s$ for all $s,t \in \mathbb{T}^+$; in fact, $(\Theta^t)$ serves as a dynamical system on $(\Omega \times X, \mathcal{F}_\infty \otimes \mathcal{B}(X))$.

\begin{defi}
We will say that a probability measure $\rho$ on $X$ is \emph{stationary} if $\mathbb{P}|_{\mathcal{F}_\infty\!} \otimes \rho$ is invariant under $(\Theta^t)$. We will say that $\rho$ is \emph{ergodic} if $\mathbb{P}|_{\mathcal{F}_\infty\!} \otimes \rho$ is ergodic with respect to $(\Theta^t)$. We will say that $\varphi$ is \emph{uniquely ergodic} if there is exactly one stationary probability measure.
\end{defi}

\noindent Stationarity can also be defined with respect to the family of probability measures $(\varphi_x^t)$ defined in Section~1.2: 

\begin{prop}
For any probability measure $\rho$ on $X$ and any $t \in \mathbb{T}^+$, let $\varphi^{t\ast\!}\rho$ be the probability measure on $X$ defined by
\[ \varphi^{t\ast\!}\rho(A) \ = \ \int_\Omega \varphi(t,\omega)_\ast\rho(A) \, \mathbb{P}(d\omega) \ = \ \int_X \varphi_x^t(A) \, \rho(dx) \]
\noindent for all $A \in \mathcal{B}(X)$. Then $\Theta^t_\ast(\mathbb{P}|_{\mathcal{F}_\infty\!} \otimes \rho)=\mathbb{P}|_{\mathcal{F}_\infty\!} \otimes \varphi^{t\ast\!}\rho$ for all $t \in \mathbb{T}^+$. In particular, $\rho$ is stationary if and only if $\varphi^{t\ast\!}\rho=\rho$ for all $t \in \mathbb{T}^+$.
\end{prop}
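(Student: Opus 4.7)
The plan is to establish the pushforward identity $\Theta^t_\ast(\mathbb{P}|_{\mathcal{F}_\infty\!} \otimes \rho) = \mathbb{P}|_{\mathcal{F}_\infty\!} \otimes \varphi^{t\ast\!}\rho$ by checking it on measurable rectangles and then invoking uniqueness of measure extension; the ``in particular'' assertion will then drop out by comparing marginals. The key input beyond Fubini will be the memorylessness hypothesis.

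First I would fix $t \in \mathbb{T}^+$, $A \in \mathcal{F}_\infty$ and $B \in \mathcal{B}(X)$, and unfold
\[ \Theta^t_\ast(\mathbb{P}|_{\mathcal{F}_\infty\!} \otimes \rho)(A \times B) \, = \, \int_\Omega \!\! \int_X \mathbbm{1}_A(\theta^t\omega)\,\mathbbm{1}_B(\varphi(t,\omega)x) \, \rho(dx) \, \mathbb{P}(d\omega) \, = \, \int_\Omega \mathbbm{1}_{\theta^{-t}A}(\omega) \cdot \varphi(t,\omega)_\ast\rho(B) \, \mathbb{P}(d\omega). \]
The crucial observation is that the integrand $\omega \mapsto \varphi(t,\omega)_\ast\rho(B)$ is $\mathcal{F}_t$-measurable, since it is obtained by integrating the $(\mathcal{F}_t \otimes \mathcal{B}(X))$-measurable indicator $\mathbbm{1}_B \circ \varphi(t,\cdot)$ (from Definition~1.1.8(a)) against $\rho$ in the $x$-variable, by Fubini. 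On the other hand $\mathbbm{1}_{\theta^{-t}A}$ is $\theta^{-t}\mathcal{F}_\infty$-measurable. By the memorylessness assumption (Definition~1.1.6), $\mathcal{F}_t$ and $\theta^{-t}\mathcal{F}_\infty$ are $\mathbb{P}$-independent, so the integral factorises as
\[ \mathbb{P}(\theta^{-t}A) \cdot \int_\Omega \varphi(t,\omega)_\ast\rho(B) \, \mathbb{P}(d\omega) \, = \, \mathbb{P}(A) \cdot \varphi^{t\ast\!}\rho(B) \, = \, (\mathbb{P}|_{\mathcal{F}_\infty\!} \otimes \varphi^{t\ast\!}\rho)(A \times B), \]
where the first equality uses $(\theta^t)$-invariance of $\mathbb{P}$ and the definition of $\varphi^{t\ast\!}\rho$. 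Since measurable rectangles form a $\pi$-system generating $\mathcal{F}_\infty \otimes \mathcal{B}(X)$, a standard $\pi$-$\lambda$ / Carath\'{e}odory argument extends the identity to all of $\mathcal{F}_\infty \otimes \mathcal{B}(X)$.

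For the ``in particular'' part, I would observe that the two product measures $\mathbb{P}|_{\mathcal{F}_\infty\!} \otimes \rho$ and $\mathbb{P}|_{\mathcal{F}_\infty\!} \otimes \varphi^{t\ast\!}\rho$ share the same $\Omega$-marginal and are determined by their marginals, so they coincide if and only if their $X$-marginals agree (which one can also see by testing against rectangles $\Omega \times B$). Hence $\mathbb{P}|_{\mathcal{F}_\infty\!} \otimes \rho$ is $(\Theta^t)$-invariant for every $t$ precisely when $\varphi^{t\ast\!}\rho = \rho$ for every $t$, which is the stated equivalence. The only point requiring genuine care in this proof is aligning the $\sigma$-algebras correctly so that memorylessness applies in exactly the right direction (note that $\mathbbm{1}_{\theta^{-t}A}$ lies in $\theta^{-t}\mathcal{F}_\infty$, not in $\mathcal{F}_t$); everything else is routine Fubini and the cocycle structure.
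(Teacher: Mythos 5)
Your proof is correct, and it is exactly the canonical argument one would give here: the paper itself does not prove the statement inline but defers to Theorem~143(i) of~[New15] and Lemma~1.2.3 of~[Kif86], which carry out essentially the same computation. You correctly identify the one point of genuine content — that $\omega \mapsto \varphi(t,\omega)_\ast\rho(B)$ is $\mathcal{F}_t$-measurable by Fubini from Definition~1.1.8(a), while $\mathbbm{1}_{\theta^{-t}A}$ is $\theta^{-t}\mathcal{F}_\infty$-measurable, so memorylessness (Definition~1.1.6 with $s=t$) factorises the integral; everything else (the $\pi$-$\lambda$ extension from rectangles, $(\theta^t)$-invariance of $\mathbb{P}$ giving $\mathbb{P}(\theta^{-t}A)=\mathbb{P}(A)$, and reading off the second $X$-marginal for the ``in particular'') is routine and handled correctly.
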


\noindent For a proof, see Theorem~143(i) of [New15] or (in discrete time) Lemma~1.2.3 of [Kif86].
\\ \\
Likewise, ergodicity can also be defined with respect to the family of probability measures $(\varphi_x^t)$; see Theorem~143(ii) of [New15] or (in discrete time) Theorem~1.2.1 of [Kif86] for details.
\\ \\
We also have the following, which is essentially due to the ``ergodic decomposition theorem'':

\begin{lemma}
For any bounded measurable functions $h_1,h_2:X \to [0,\infty)$, if there exists a stationary probability measure $\rho$ such that $\int_X h_1(x) \, \rho(dx)=0$ and $\int_X h_2(x) \, \rho(dx)>0$, then there exists an ergodic probability measure $\rho'$ such that $\int_X h_1(x) \, \rho'(dx)=0$ and $\int_X h_2(x) \, \rho'(dx)>0$.
\end{lemma}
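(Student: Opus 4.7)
The plan is to apply the ergodic decomposition theorem to $\rho$ and then extract the desired ergodic measure by a Fubini argument. Specifically, I would first observe that the set $\mathcal{S}$ of stationary probability measures on $X$ is a non-empty convex subset of the space $\mathcal{P}(X)$ of Borel probability measures on $X$, and is compact and metrizable in the narrow topology (since $X$ is compact metric). By Proposition~1.2.10, stationarity is equivalent to the fixed-point condition $\varphi^{t\ast\!}\rho = \rho$ for all $t$, which is preserved under narrow limits (by right-continuity of $t \mapsto \varphi^{t\ast\!}\nu$ and continuity of $\nu \mapsto \varphi^{t\ast\!}\nu$), confirming compactness of $\mathcal{S}$. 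By the characterisation of ergodicity via extremality cited in Definition~1.2.5 (and Proposition~7.2.4 of [FM10]), the extreme points of $\mathcal{S}$ are precisely the ergodic measures. Since $\mathcal{P}(X)$ is a compact metrisable subset of a locally convex space, Choquet's theorem then yields a probability measure $\mu$ on the set $\mathcal{E}$ of ergodic stationary measures such that $\rho$ is the barycentre of $\mu$; that is, $\int_X h \, d\rho = \int_\mathcal{E} \bigl(\int_X h \, d\nu\bigr) \, d\mu(\nu)$ for every bounded measurable $h : X \to \mathbb{R}$.

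Next I would apply this decomposition to $h_1$ and $h_2$. From $0 = \int h_1 \, d\rho = \int_\mathcal{E} \bigl(\int h_1 \, d\nu\bigr) \, d\mu(\nu)$ and $h_1 \geq 0$, the set $A := \{\nu \in \mathcal{E} : \int h_1 \, d\nu = 0\}$ satisfies $\mu(A) = 1$. Restricting the barycentre formula for $h_2$ to $A$ then gives
\[ 0 \ < \ \int_X h_2 \, d\rho \ = \ \int_A \Bigl(\int_X h_2 \, d\nu\Bigr) \, d\mu(\nu), \]
so $\mu\bigl(\{\nu \in A : \int h_2 \, d\nu > 0\}\bigr) > 0$, and any $\rho'$ chosen from this set has the required properties.

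As an alternative route, I could instead apply the classical ergodic decomposition theorem directly to the dynamical system $(\Theta^t)$ on $(\Omega \times X, \mathcal{F}_\infty \otimes \mathcal{B}(X), \mathbb{P}|_{\mathcal{F}_\infty\!} \otimes \rho)$, obtaining ergodic components, and then show (using Proposition~1.2.10 together with the uniqueness of the product form $\mathbb{P}|_{\mathcal{F}_\infty\!} \otimes (\cdot)$ for $(\Theta^t)$-invariant measures arising from stationary $\rho'$) that each component is itself of the form $\mathbb{P}|_{\mathcal{F}_\infty\!} \otimes \rho'$ for an ergodic stationary $\rho'$, yielding the same decomposition on $X$.

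The main obstacle is really just verifying cleanly that the ergodic decomposition is available in precisely the form needed: namely, a disintegration of $\rho$ over $\mathcal{E}$ rather than over some abstract index space. The Choquet route sidesteps this nicely since it directly gives a measure on $\mathcal{E}$; once the decomposition is in hand, the extraction of $\rho'$ reduces to the trivial non-negative Fubini argument above.
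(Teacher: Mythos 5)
Your proposal is correct and takes essentially the same route as the paper, which simply cites an ergodic decomposition theorem together with the identification of ergodic stationary measures with extreme points (Corollary~109 and Theorem~143 of [New15]); your Choquet-plus-nonnegative-Fubini argument spells out exactly these two ingredients. One small caveat worth noting: Proposition~7.2.4 of [FM10] gives extremality~$\Leftrightarrow$~ergodicity for a measure-preserving system, so applied to $(\Theta^t)$ it characterises ergodicity of $\mathbb{P}|_{\mathcal{F}_\infty\!} \otimes \rho$ among all $(\Theta^t)$-invariant measures rather than directly characterising the extreme points of the set of stationary measures on $X$; passing between the two also uses the fact (special to memoryless RDS and the content of the cited Theorem~143) that every $(\Theta^t)$-invariant probability measure projecting to $\mathbb{P}$ is of product form, which your alternative route via the skew product makes explicit.
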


\begin{proof}
This follows immediately from combining Corollary~109 and Theorem~143 of [New15].
\end{proof}

\begin{prop}
For any stationary probability measure $\rho$, $\mathrm{supp}\,\rho$ is forward-invariant. Conversely, for any non-empty closed forward-invariant set $K \subset X$, there exists a stationary probability measure $\rho$ such that $\rho(K)=1$. (Hence in particular, $\varphi$ must admit at least one stationary probability measure.)
\end{prop}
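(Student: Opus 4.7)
The plan is to prove the two directions separately, with the forward implication following from a lower semicontinuity argument applied to stationarity, and the converse from a Krylov–Bogolyubov time-averaging construction.

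For the forward direction, the approach is to suppose $\rho$ is stationary and argue by contradiction that $U := X \setminus \mathrm{supp}\,\rho$ cannot be accessible from any point of $\mathrm{supp}\,\rho$. The key ingredient is the lower semicontinuity of $y \mapsto \varphi_y^t(U)$ (which was already used in the proof of Lemma~1.2.3): if there existed $x \in \mathrm{supp}\,\rho$ and $t \in \mathbb{T}^+$ with $\varphi_x^t(U) > 0$, then a neighbourhood $V$ of $x$ would witness $\varphi_y^t(U) \geq c$ for some $c > 0$ uniformly on $V$, and since $\rho(V) > 0$ by definition of the support, Proposition~1.2.11 would give $\rho(U) = \varphi^{t*}\rho(U) \geq c\,\rho(V) > 0$, contradicting $\rho(U) = 0$. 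Using the characterisation of forward invariance noted just after Definition~1.2.2, this proves $\mathrm{supp}\,\rho$ is forward-invariant.

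For the converse, I would pick an arbitrary $x \in K$ and form the empirical measures $\rho_n := \frac{1}{n}\sum_{k=0}^{n-1}\varphi_x^k$ in discrete time, or $\rho_n := \frac{1}{n}\int_0^n \varphi_x^s\,ds$ (interpreted weakly) in continuous time. Because $K$ is closed and forward-invariant, $\varphi_x^s(K) = 1$ for every $s$, so each $\rho_n$ is supported on $K$. Compactness of $X$ makes the space of Borel probability measures narrowly compact, so some subsequence $\rho_{n_j}$ converges narrowly to a limit $\rho$, and the Portmanteau theorem combined with closedness of $K$ gives $\rho(K) \geq \limsup_j \rho_{n_j}(K) = 1$. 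To check stationarity I would fix $t \in \mathbb{T}^+$ and a bounded continuous $f$, use the continuity of $y \mapsto \int f\,d\varphi_y^t$ (which follows from the joint continuity in $y$ and right-continuity in $t$ of $(t,y)\mapsto \varphi_y^t$ noted in Section~1.2) to conclude that $\int f\,d(\varphi^{t*}\rho_{n_j}) \to \int f\,d(\varphi^{t*}\rho)$, and separately verify by a telescoping/Fubini computation that $\int f\,d(\varphi^{t*}\rho_n) - \int f\,d\rho_n$ is $O(1/n)$. Passing to the limit and applying Proposition~1.2.11 gives $\varphi^{t*}\rho = \rho$. The final parenthetical assertion then follows by applying the converse with $K = X$, which is trivially forward-invariant.

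The main technical nuisance is the continuous-time case: one has to check that $s \mapsto \varphi_x^s$ is sufficiently regular for the weak integral defining $\rho_n$ to make sense and for Fubini to apply in the telescoping step. Right-continuity of $s \mapsto \varphi_x^s$ in the narrow topology (inherited from the c\`{a}dl\`{a}g property of $\varphi$ via dominated convergence) is enough to ensure Borel measurability of $s \mapsto \int f\,d\varphi_x^s$ for each bounded continuous $f$, which is what the argument actually needs. Apart from this point, everything reduces to standard arguments, so I do not expect genuine difficulty.
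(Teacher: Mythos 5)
Your proposal is correct and takes essentially the same approach as the paper: for the forward direction you use lower semicontinuity of $y \mapsto \varphi_y^t(U)$ and a contradiction, where the paper equivalently uses upper semicontinuity of $x \mapsto \varphi_x^t(\mathrm{supp}\,\rho)$ to show directly that $\{x : \varphi_x^t(\mathrm{supp}\,\rho)=1\}$ is a closed $\rho$-full set and hence contains $\mathrm{supp}\,\rho$; for the converse the paper simply cites the Krylov--Bogolyubov theorem, which is exactly the construction you spell out. (Minor slip: the stationarity characterisation you invoke is Proposition~1.3.2, not ``Proposition~1.2.11''.)
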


\begin{proof}
Let $\rho$ be a stationary probability measure. Fix any $t \in \mathbb{T}^+$. By Proposition~1.3.2,
\[ \int_X \varphi_x^t(\mathrm{supp}\,\rho) \, \rho(dx) \ = \ \rho(\mathrm{supp}\,\rho) \ = \ 1, \]
and so
\[ \rho(x \in X : \varphi_x^t(\mathrm{supp}\,\rho)=1) \ = \ 1.\]
\noindent Since the map $x \mapsto \varphi_x^t$ is continuous, the set $\{x \in X : \varphi_x^t(\mathrm{supp}\,\rho)=1\}$ is closed, and hence contains $\mathrm{supp}\,\rho$. Consequently, as in Section~1.2, $\mathrm{supp}\,\rho$ is forward-invariant.
\\ \\
The converse statement is a version of the ``Krylov-Bogolyubov theorem'' (see e.g.~Theorem~114 of [New15] or Lemma~5.2.1 of [Kif86]).
\end{proof}

\begin{cor}
For any minimal set $K \subset X$, there exists an ergodic probability measure $\rho$ such that $\mathrm{supp}\,\rho=K$.
\end{cor}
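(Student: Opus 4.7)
The plan is to combine three tools already established: Proposition~1.3.4 (existence of stationary measures concentrated on closed forward-invariant sets, and forward-invariance of the support of a stationary measure), Lemma~1.3.3 (ergodic decomposition), and the definition of minimality.

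First, since $K$ is in particular a non-empty closed forward-invariant set, the converse part of Proposition~1.3.4 yields a stationary probability measure $\rho_0$ with $\rho_0(K)=1$. The measure $\rho_0$ need not be ergodic, so the next step is to extract an ergodic component still concentrated on $K$. For this I would apply Lemma~1.3.3 with $h_1 = \mathbbm{1}_{X \setminus K}$ and $h_2 \equiv 1$: we have $\int h_1 \, d\rho_0 = 0$ and $\int h_2 \, d\rho_0 = 1 > 0$, so the lemma produces an ergodic probability measure $\rho$ with $\int h_1 \, d\rho = 0$, i.e.\ $\rho(K) = 1$.

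Finally, I would use minimality to upgrade $\rho(K) = 1$ into $\mathrm{supp}\,\rho = K$. Since $K$ is closed and $\rho(K) = 1$, we have $\mathrm{supp}\,\rho \subset K$. By the first part of Proposition~1.3.4, $\mathrm{supp}\,\rho$ is a closed forward-invariant subset of $X$, and it is non-empty (as the support of a probability measure). Thus $\mathrm{supp}\,\rho$ is a non-empty closed forward-invariant subset of the minimal set $K$, which by definition of minimality (Definition~1.2.5) forces $\mathrm{supp}\,\rho = K$.

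There is essentially no substantive obstacle here; the argument is a short assembly of prior results, and the only point where one should be careful is checking that the ergodic measure extracted from Lemma~1.3.3 still lives on $K$, which is ensured by the choice of $h_1$ as the indicator of the complement.
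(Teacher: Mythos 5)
Your proof is correct and follows essentially the same three-step route as the paper: invoke the converse part of Proposition~1.3.4 for a stationary measure on $K$, pass to an ergodic one via Lemma~1.3.3, and then use minimality together with forward-invariance of the support (first part of Proposition~1.3.4) to conclude $\mathrm{supp}\,\rho = K$. The only difference is that you make the choice $h_1=\mathbbm{1}_{X\setminus K}$, $h_2\equiv 1$ explicit where the paper leaves it implicit.
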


\begin{proof}
By Proposition~1.3.4, there exists a stationary probability measure $\tilde{\rho}$ such that $\tilde{\rho}(K)=1$. Hence, by Lemma~1.3.3, there exists an ergodic probability measure $\rho$ such that $\rho(K)=1$, i.e.~$\mathrm{supp}\,\rho \subset K$. Since $K$ is minimal, it follows from Proposition~1.3.4 that $\mathrm{supp}\,\rho=K$.
\end{proof}

\section{Synchronisation and Stability}

In this section, we will say that $\varphi$ is an \emph{open mapping RDS} if $\varphi(t,\omega)$ is an open mapping for all $t$ and $\omega$. For example, any invertible RDS is an open mapping RDS. (The results that we shall prove for open mapping RDS are not actually integral to the rest of the paper, but are included for the sake of completeness, since most RDS considered in practice are invertible.)

\subsection{Synchronisation}

\begin{defi}
Given a sample point $\omega \in \Omega$ and a non-empty set $A \subset X$, we will say that \emph{$A$ contracts under $\omega$} (or that \emph{$A$ is uniformly mutually convergent under $\omega$}) if $\,\mathrm{diam}(\varphi(t,\omega)A) \to 0$ as $t \to \infty$. (It is not hard to show that, since $X$ is compact, this does not depend on the metrisation $d$ of the topology on $X$.)
\end{defi}

\begin{defi}
Given $\omega \in \Omega$ and points $x,y \in X$, we will say that \emph{$x$ and $y$ mutually converge under $\omega$} if $\{x,y\}$ contracts under $\omega$.
\end{defi}

\begin{defi}
We will say that $\varphi$ is (\emph{globally}) \emph{synchronising} if for every $x,y \in X$,
\[ \mathbb{P}( \, \omega \, : \, \textrm{$x$ and $y$ mutually converge under $\omega$} \, ) \ = \ 1. \]
\end{defi}

\noindent The following is a generalisation of Corollary~2 of [KN04].

\begin{prop}
If $\varphi$ is synchronising then $\varphi$ is uniquely ergodic.
\end{prop}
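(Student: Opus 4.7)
The plan is a standard ``coupling of stationary measures'' argument, which in this memoryless setting reduces immediately via synchronisation. Suppose $\rho_1$ and $\rho_2$ are two stationary probability measures. I aim to show that $\int f\,d\rho_1 = \int f\,d\rho_2$ for every continuous $f:X\to\mathbb{R}$, which (since $X$ is a compact metric space) will give $\rho_1=\rho_2$.

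Fix such an $f$; since $X$ is compact, $f$ is bounded and uniformly continuous. By stationarity (in the form of Proposition~1.3.2), for each $t\in\mathbb{T}^+$ and each $i\in\{1,2\}$,
\[ \int_X f(x)\,\rho_i(dx) \ = \ \int_X \varphi^{t\ast}\!\rho_i\textrm{-integral of }f \ = \ \int_\Omega\!\int_X f(\varphi(t,\omega)x)\,\rho_i(dx)\,\mathbb{P}(d\omega). \]
Taking the difference and writing the resulting double integral over $X\times X$ against the product measure $\rho_1\otimes\rho_2$, one obtains
\[ \int_X f\,d\rho_1 - \int_X f\,d\rho_2 \ = \ \int_{X\times X}\!\int_\Omega \bigl(f(\varphi(t,\omega)x)-f(\varphi(t,\omega)y)\bigr)\,\mathbb{P}(d\omega)\,(\rho_1\otimes\rho_2)(dx,dy), \]
valid for every $t\in\mathbb{T}^+$; the joint measurability of the integrand follows from the joint measurability of $(t,\omega,x)\mapsto\varphi(t,\omega)x$ noted after Definition~1.1.8.

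Now I would let $t\to\infty$. For every fixed $(x,y)\in X\times X$, the synchronisation hypothesis gives that $d(\varphi(t,\omega)x,\varphi(t,\omega)y)\to 0$ for $\mathbb{P}$-almost every $\omega$; uniform continuity of $f$ then yields $f(\varphi(t,\omega)x)-f(\varphi(t,\omega)y)\to 0$ almost surely, and boundedness of $f$ allows dominated convergence to kill the inner integral, and then a second application of dominated convergence (with dominating constant $2\|f\|_\infty$) on $X\times X$ kills the outer integral. Hence $\int f\,d\rho_1=\int f\,d\rho_2$, which completes the proof.

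The argument is entirely routine once one sets up the coupling on $X\times X$; the only ``step to be careful about'' is making sure that the measurability, boundedness and uniform continuity all line up to justify the two applications of dominated convergence, but no real obstacle arises because $X$ is compact and the integrand is uniformly bounded by $2\|f\|_\infty$.
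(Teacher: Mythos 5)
Your proof is correct and rests on the same underlying mechanism as the paper's: stationarity pushes the integral forward by time $t$, synchronisation together with uniform continuity of $f$ kills the integrand pointwise, and dominated convergence finishes. The decomposition differs slightly, though. You couple \emph{two} candidate stationary measures $\rho_1,\rho_2$ directly via the product measure $\rho_1\otimes\rho_2$ on $X\times X$ and show that $\int f\,d\rho_1-\int f\,d\rho_2$ equals a quantity that vanishes as $t\to\infty$. The paper instead fixes \emph{one} arbitrary stationary $\rho$ and a deterministic point $x$, writes $\int f\,d\rho - \int f\,d\varphi_x^t$ as a single DCT-controlled expression over $\Omega\times X$ with measure $\mathbb{P}\otimes\rho$, and concludes that $\varphi_x^t\to\rho$ narrowly for every $x$, from which uniqueness follows because the narrow limit of $\varphi_x^t$ cannot depend on the choice of $\rho$. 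The paper's route thus extracts a useful by-product (convergence of the one-point transition probabilities to the unique stationary measure) and needs only one application of dominated convergence; yours is symmetric in the two stationary measures and makes the coupling explicit, at the cost of a Fubini step and a nested double application of DCT (which you could have collapsed into a single DCT over $\Omega\times X\times X$ against $\mathbb{P}\otimes\rho_1\otimes\rho_2$). Both arguments are sound.
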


\begin{proof}
Fix a point $x \in X$, and let $\rho$ be any stationary probability measure; we will show that $\varphi_x^t$ must converge (in the narrow topology) to $\rho$ as $t \to \infty$, from which it follows that $\rho$ is the only stationary probability measure. Let $g:X \to \mathbb{R}$ be any continuous function. It is clear that for $(\mathbb{P} \otimes \rho)$-almost all $(\omega,y) \in \Omega \times X$, $x$ and $y$ mutually converge under $\omega$, and so $g(\varphi(t,\omega)y) - g(\varphi(t,\omega)x) \to 0$ as $t \to \infty$. The dominated convergence theorem then gives that
\[ \underbrace{\int_{\Omega \times X} g(\varphi(t,\omega)y) \, (\mathbb{P} \otimes \rho)(d(\omega,y))}_{\textcircled{a}} \; - \underbrace{\int_\Omega g(\varphi(t,\omega)x) \, \mathbb{P}(d\omega)}_{\textcircled{b}} \ \; \to \ \; 0 \hspace{3mm} \textrm{as } \, t \to \infty. \]
\noindent Observe, however, that
\[ \textcircled{a} \ = \ \int_X g(z) \, \rho(dz) \]
\noindent since $\mathbb{P}|_{\mathcal{F}_\infty\!} \otimes \rho$ is $(\Theta^t)$-invariant, and that
\[ \hspace{2mm} \textcircled{b} \ = \int_X g(z) \, \varphi_x^t(dz). \]
\noindent So we are done.
\end{proof}

\begin{rmk}
If the phase space $X$ were not compact, then $\varphi$ being synchronising would imply that there is \emph{at most} one stationary probability measure. (In the above proof, we would take $g$ to be a bounded uniformly continuous function.)
\end{rmk}

\subsection{Stability and potential stability}

Note that, just as a trajectory of a deterministic dynamical system is determined by its initial condition, so likewise a trajectory of a random dynamical system is determined by the combination of a noise realisation and an initial condition.

\begin{defi}
We will say that a pair $(\omega,x) \in \Omega \times X$ is \emph{asymptotically stable} if there exists a neighbourhood $U$ of $x$ which contracts under $\omega$. Let $O \subset \Omega \times X$ denote the set of all asymptotically stable pairs $(\omega,x)$. For each $x \in X$, let $O_x := \{ \omega : (\omega,x) \in O \}$ denote the $x$-section of $O$.
\end{defi}

\begin{defi}
For any non-empty open set $U \subset X$, let $E_U \subset \Omega$ be the set of sample points under which $U$ contracts.
\end{defi}

\noindent Note that for any $x \in X$,
\[ O_x \ = \ \bigcup_{n=1}^\infty E_{B_{\frac{1}{n}}(x)} \]
\noindent where $B_r(x)$ denotes the open ball of radius $r$ about $x$.

\begin{lemma}
For any open set $U \subset X$, $E_U$ is $\mathcal{F}_\infty$-measurable. Furthermore, $O$ is an $(\mathcal{F}_\infty \otimes \mathcal{B}(X))$-measurable backward-invariant set under the dynamical system $(\Theta^t)$. If $\varphi$ is an open mapping RDS then $O$ is also forward-invariant under $(\Theta^t)$.
\end{lemma}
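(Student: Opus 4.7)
The plan is to handle the three claims in order, with the only subtle point being the first (measurability of $E_U$) in continuous time. First I would establish $\mathcal{F}_\infty$-measurability of $E_U$. Fix an open $U \subset X$ and a countable dense subset $\{x_k\}_{k \in \mathbb{N}}$ of $U$; since $\varphi(t,\omega)$ is continuous, $\{\varphi(t,\omega) x_k\}$ is dense in $\varphi(t,\omega) U$, so for each fixed $t$,
\[ \mathrm{diam}(\varphi(t,\omega) U) \; = \; \sup_{k,l \in \mathbb{N}} d\bigl(\varphi(t,\omega) x_k, \, \varphi(t,\omega) x_l\bigr), \]
an $\mathcal{F}_t$-measurable function of $\omega$ by property~(a) of Definition~1.1.8. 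To convert the limit $t \to \infty$ into a countable event, I would write
\[ E_U \; = \; \bigcap_{n \in \mathbb{N}} \, \bigcup_{T \in \mathbb{N}} \, \bigcap_{q \in \mathbb{Q} \cap [T,\infty)} \bigl\{\omega : \mathrm{diam}(\varphi(q,\omega) U) \leq \tfrac{1}{n}\bigr\}, \]
which a priori describes only the eventual smallness of the diameter along rationals. The nontrivial inclusion uses the right-continuity property~(d) of Definition~1.1.8: for any real $t \geq T$, choosing rationals $q_j \downarrow t$ gives $\varphi(q_j,\omega) z \to \varphi(t,\omega) z$ pointwise in $z$, so a uniform bound of $\tfrac{1}{n}$ on $d(\varphi(q_j,\omega) x, \varphi(q_j,\omega) y)$ passes in the limit to $d(\varphi(t,\omega) x, \varphi(t,\omega) y)$; taking the sup over $x,y \in U$ yields $\mathrm{diam}(\varphi(t,\omega) U) \leq \tfrac{1}{n}$. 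Hence $E_U \in \mathcal{F}_\infty$.

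Measurability of $O$ is then immediate from a countable base $\{V_k\}_{k \in \mathbb{N}}$ for the topology of $X$. Since any subset of a contracting set is itself contracting (by the trivial diameter inequality), $(\omega, x) \in O$ iff there exists $k$ with $x \in V_k$ and $\omega \in E_{V_k}$; therefore
\[ O \; = \; \bigcup_{k=1}^{\infty} E_{V_k} \times V_k \; \in \; \mathcal{F}_\infty \otimes \mathcal{B}(X). \]

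For backward-invariance of $O$ under $(\Theta^t)$, suppose $\Theta^t(\omega, x) = (\theta^t\omega, \varphi(t,\omega) x) \in O$, witnessed by an open neighbourhood $V$ of $\varphi(t,\omega) x$ which contracts under $\theta^t\omega$. By continuity of $\varphi(t,\omega)$, $U := \varphi(t,\omega)^{-1}(V)$ is an open neighbourhood of $x$, and the cocycle property gives
\[ \varphi(s+t,\omega) U \; = \; \varphi(s, \theta^t\omega)\bigl(\varphi(t,\omega) U\bigr) \; \subset \; \varphi(s, \theta^t\omega) V, \]
so $\mathrm{diam}(\varphi(s+t,\omega) U) \to 0$ as $s \to \infty$ and hence $(\omega, x) \in O$. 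For forward-invariance in the open-mapping case, given $(\omega, x) \in O$ with contracting neighbourhood $U$ of $x$, the open-mapping hypothesis ensures that $\varphi(t,\omega) U$ is an open neighbourhood of $\varphi(t,\omega) x$, and the cocycle gives $\varphi(s, \theta^t\omega)(\varphi(t,\omega) U) = \varphi(s+t,\omega) U$, whose diameter tends to zero; thus $\Theta^t(\omega, x) \in O$. The only genuine technical point is the reduction to rational times in the first step; all remaining steps are direct applications of the cocycle identity together with continuity or the open-mapping hypothesis.
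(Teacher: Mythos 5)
Your proof is correct and follows essentially the same route as the paper's: a countable dense subset of $U$ plus rational times (with property (d) to pass from rationals to all times) gives $\mathcal{F}_\infty$-measurability of $E_U$, a countable base gives $O = \bigcup_k E_{V_k}\times V_k$, and the cocycle identity together with continuity (resp.\ the open-mapping hypothesis) gives backward (resp.\ forward) invariance. You spell out the right-continuity and cocycle steps that the paper leaves as ``clear'', but the underlying argument is the same.
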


\begin{proof}
Fix an open set $U \subset X$. Let $S \subset U$ be a countable set that is dense in $U$, and let $D$ be a countable dense subset of $\mathbb{T}^+$. Then we can write
\[ E_U \ = \ \bigcap_{i=1}^\infty \, \bigcup_{j=1}^\infty \, \bigcap_{t \in D \cap [j,\infty)} \, \bigcap_{x,y \in S} \, \{ \omega \in \Omega \, : \, d(\varphi(t,\omega)x,\varphi(t,\omega)y) < \tfrac{1}{i} \}. \]
\noindent So $E_U \in \mathcal{F}_\infty$. Now given a countable base $\mathcal{U}$ for the topology on $X$, one can easily check that $\,O \, = \, \bigcup_{V \in \, \mathcal{U}} E_V \times V$. Hence $O \in \mathcal{F}_\infty \otimes \mathcal{B}(X)$. By continuity, it is clear that $O$ is backward-invariant under $(\Theta^t)$. Similarly, it is clear that if $\varphi$ is an open mapping RDS then $O$ is forward-invariant under $(\Theta^t)$.
\end{proof}

\begin{defi}
For each $x \in X$, let $P_0(x)=\mathbb{P}(O_x)$ and let $P_r(x)=\mathbb{P}(E_{B_r(x)})$ for all $r>0$.
\end{defi}

\noindent It is clear that $P_r(x)$ is decreasing in $r$, with $P_0(x)=\sup_{r>0} P_r(x)=\lim_{r \to 0} P_r(x)$. Also note that the map $x \mapsto P_0(x)$ is measurable.\footnote{Indeed, one of the preparations for the statement of Fubini's theorem is that the measure of a section of a measurable set depends measurably on where the section is taken.}

\begin{lemma}
For any $x \in X$ and $t \in \mathbb{T}^+$, $P_0(x) \, \geq \, \int_X P_0(y)\,\varphi_x^t(dy)$. If $\varphi$ is an open mapping RDS then the inequality becomes equality.
\end{lemma}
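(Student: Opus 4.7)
The plan is to exploit the backward-invariance (and, in the open-mapping case, forward-invariance) of $O$ under $(\Theta^t)$ established in Lemma~2.2.3, together with the memoryless structure of the noise, to pull the defining indicator of $O_x$ through an integration and a change of variables.

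First I would rewrite everything in terms of $O$: by definition $P_0(x) = \mathbb{P}(O_x) = \int_\Omega \mathbbm{1}_O(\omega,x)\,\mathbb{P}(d\omega)$. Backward-invariance of $O$ under $\Theta^t$ means that $\Theta^{-t}(O) \subset O$, i.e.
\[
\mathbbm{1}_O(\omega,x) \;\geq\; \mathbbm{1}_O\bigl(\Theta^t(\omega,x)\bigr) \;=\; \mathbbm{1}_{O_{\varphi(t,\omega)x}}(\theta^t\omega)
\]
for every $(\omega,x)$. Integrating this against $\mathbb{P}$ gives $P_0(x) \geq \int_\Omega \mathbbm{1}_{O_{\varphi(t,\omega)x}}(\theta^t\omega)\,\mathbb{P}(d\omega)$, and the whole problem reduces to identifying the right-hand integral with $\int_X P_0(y)\,\varphi_x^t(dy)$.

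For that identification I would set $Y(\omega) := \varphi(t,\omega)x$, which is $\mathcal{F}_t$-measurable, and view $\theta^t$ as a $\theta^{-t}\mathcal{F}_\infty$-measurable random element of $(\Omega,\mathcal{F}_\infty)$. The memoryless hypothesis on the noise space says exactly that $\mathcal{F}_t$ and $\theta^{-t}\mathcal{F}_\infty$ are $\mathbb{P}$-independent, so $Y$ and $\theta^t$ are independent. Their laws are, respectively, $Y_\ast\mathbb{P} = \varphi_x^t$ and (by $(\theta^t)$-invariance of $\mathbb{P}$) $(\theta^t)_\ast\mathbb{P}|_{\mathcal{F}_\infty} = \mathbb{P}|_{\mathcal{F}_\infty}$. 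Writing $f(y,\omega') := \mathbbm{1}_O(\omega',y)$, Fubini together with the independence yields
\[
\int_\Omega f(Y(\omega),\theta^t\omega)\,\mathbb{P}(d\omega) \;=\; \int_X \!\int_\Omega \! f(y,\omega')\,\mathbb{P}(d\omega')\,\varphi_x^t(dy) \;=\; \int_X P_0(y)\,\varphi_x^t(dy),
\]
which combined with the previous inequality gives the first assertion. A technical point to be careful with is that $O \in \mathcal{F}_\infty \otimes \mathcal{B}(X)$ (Lemma~2.2.3), which is exactly what is needed for $f$ to be jointly measurable and for the sectional measurability of $y \mapsto P_0(y)$ already noted after Definition~2.2.4.

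Finally, if $\varphi$ is an open mapping RDS then Lemma~2.2.3 additionally gives that $O$ is forward-invariant under $(\Theta^t)$, so $\mathbbm{1}_O(\omega,x) = \mathbbm{1}_O(\Theta^t(\omega,x))$ pointwise and the inequality above becomes an equality throughout; the same chain of equalities then produces $P_0(x) = \int_X P_0(y)\,\varphi_x^t(dy)$. I do not expect any serious obstacle here: the whole argument is essentially a one-line application of $(\Theta^t)$-invariance of $O$ (up to the relevant direction) combined with the Markov-type identity coming from memorylessness; the only place one needs to pay attention is in verifying that $Y$ and $\theta^t$ can legitimately be treated as independent random elements, which is precisely what the definition of a memoryless noise space is set up to give.
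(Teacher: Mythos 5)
Your proof is correct and takes essentially the same approach as the paper's, which also combines $(\Theta^t)$-backward-invariance of $O$ with a Fubini/independence argument based on the memoryless property (the paper simply runs the chain of (in)equalities starting from $\int_X P_0(y)\,\varphi_x^t(dy)$ rather than from $P_0(x)$, and appeals to an exercise in [New15] for the step you spell out via the independence of $Y$ and $\theta^t$). The treatment of the open-mapping case via forward-invariance matches the paper as well.
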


\begin{proof}
Recall that $P_0(y)=\mathbb{P}(O_y)=\mathbb{P}(\theta^{-t}(O_y))$ for all $y$ and $t$. Now fix any $x \in X$ and $t \in \mathbb{T}^+$.
\begin{align*}
\int_X P_0(y) \, \varphi_x^t(dy) \ &= \ \int_\Omega P_0(\varphi(t,\omega)x) \, \mathbb{P}(d\omega) \\
&= \ \int_\Omega \mathbb{P}(\theta^{-t}(O_{\varphi(t,\omega)x})) \, \mathbb{P}(d\omega) \\
&= \ \int_\Omega \int_\Omega \mathbbm{1}_O(\theta^t\tilde{\omega},\varphi(t,\omega)x) \, \mathbb{P}(d\tilde{\omega}) \, \mathbb{P}(d\omega) \\
&= \ \int_\Omega \mathbbm{1}_O(\theta^t\omega,\varphi(t,\omega)x) \, \mathbb{P}(d\omega) \hspace{4mm} \textrm{(by the memoryless property)} \\
&\leq \ \int_\Omega \mathbbm{1}_O(\omega,x) \, \mathbb{P}(d\omega) \hspace{4mm} \textrm{(since $O$ is $(\Theta^t)$-backward-invariant)} \\
&= \ P_0(x).
\end{align*}
\noindent (For a justification of the antepenultimate line, see Exercise~124(A) of [New15].) If $\varphi$ is an open mapping RDS then $O$ is both backward- and forward-invariant under $(\Theta^t)$, so the ``$\leq$'' in the penultimate line becomes ``=''.
\end{proof}

\begin{defi}
We will say that \emph{$x$ is almost surely stable} if $P_0(x) = 1$. We will say that \emph{$x$ is potentially stable} if $P_0(x)>0$.
\end{defi}

\noindent Note that $x$ is potentially stable if and only if there is a neighbourhood $U$ of $x$ such that $\mathbb{P}(E_U)>0$.

\begin{rmk}
Given a compact set $K \subset X$, if every point in $K$ is almost surely stable, then (by the Lebesgue number lemma) $P_r(\cdot) \to 1$ uniformly on $K$ as $r \to 0$.
\end{rmk}

\begin{prop}
$\mathbb{P}$-almost every $\omega \in \Omega$ has the property that for any $x \in X$, if $(\omega,x)$ is asymptotically stable then $x$ is potentially stable.
\end{prop}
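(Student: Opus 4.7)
The plan is to exploit the countable decomposition
\[ O \ = \ \bigcup_{V \in \mathcal{U}} E_V \times V \]
of the asymptotically stable pairs (where $\mathcal{U}$ is a countable base for the topology on $X$) that was obtained in the proof of Lemma~2.2.3. First I would record the following key monotonicity: for any basis set $V \in \mathcal{U}$ and any point $y \in V$ one has $E_V \subset O_y$, since whenever $V$ contracts under $\omega$, $V$ is itself a neighbourhood of $y$ contracting under $\omega$ and so $(\omega,y) \in O$. Consequently $P_0(y) \geq \mathbb{P}(E_V)$ for every $y \in V$, so any ``bad'' basis set (one containing a point that fails to be potentially stable) must satisfy $\mathbb{P}(E_V) = 0$.

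With that in hand, I would partition $\mathcal{U}$ into the ``bad'' collection $\mathcal{U}_0 := \{V \in \mathcal{U} : \mathbb{P}(E_V) = 0\}$ and its complement, and then throw away the exceptional set
\[ N \ := \ \bigcup_{V \in \mathcal{U}_0} E_V , \]
which is a countable union of $\mathbb{P}$-null sets and hence $\mathbb{P}$-null. For any $\omega \in \Omega \setminus N$ and any $x \in X$ with $(\omega,x) \in O$, the displayed decomposition of $O$ supplies a basis set $V \in \mathcal{U}$ containing $x$ with $\omega \in E_V$; since $\omega \notin N$ we cannot have $V \in \mathcal{U}_0$, so $\mathbb{P}(E_V) > 0$. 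The monotonicity observation then gives $P_0(x) \geq \mathbb{P}(E_V) > 0$, i.e.~$x$ is potentially stable, which is exactly what is required.

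I do not anticipate any serious obstacle here. The argument is essentially a ``good $\omega$'' extraction made possible by the countable-base decomposition of $O$ already recorded in Lemma~2.2.3, combined with the elementary fact that contraction of an open neighbourhood of $x$ is inherited by every point of that neighbourhood. The one subtlety worth flagging is why this is not a trivial consequence of Fubini: merely knowing pointwise that $\mathbb{P}$-a.e.\ $x$-section of the problematic set $O \cap (\Omega \times \{P_0 = 0\})$ is null would not rule out offending $\omega$-sections being uncountable. It is precisely the availability of a countable topological base, through which $O$ is witnessed, that converts this into an honest null-set statement about $\omega$.
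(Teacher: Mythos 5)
Your proposal is correct and follows essentially the same route as the paper's proof: the paper also removes the null set $\bigcup_{U \in \mathcal{U}_0} E_U$ (with $\mathcal{U}_0$ the collection of basis sets $U$ with $\mathbb{P}(E_U)=0$) and then uses the countable-base decomposition of $O$ to conclude. The monotonicity observation you record explicitly ($E_V \subset O_y$ for $y \in V$) is used implicitly in the paper via the remark following Definition~2.2.7 that $x$ is potentially stable iff some neighbourhood $U$ of $x$ has $\mathbb{P}(E_U)>0$.
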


\begin{proof}
Let $\mathcal{U}$ be a countable base for the topology on $X$, and let
\[ \mathcal{U}_0 \ := \ \{U \in \mathcal{U} \, : \, \mathbb{P}(E_U) = 0 \}. \]
\noindent Let
\[ \tilde{\Omega} \ := \ \Omega \setminus \bigcup_{U \in \, \mathcal{U}_0} E_U. \]
\noindent Now fix any $\omega \in \tilde{\Omega}$ and $x \in X$. If $(\omega,x)$ is asymptotically stable then there exists $U \in \mathcal{U}$ with $x \in U$ such that $\omega \in E_U$, and hence $U \! \nin \mathcal{U}_0$; so $\mathbb{P}(E_U)>0$ and therefore $x$ is potentially stable.
\end{proof}

\begin{prop}
The set $U_\mathit{\!ps} \subset X$ of potentially stable points is an open backward-invariant set. Letting $A_s \in \mathcal{B}(X)$ denote the set of almost surely stable points, we have that $\rho(U_\mathit{\!ps} \setminus A_s)=0$ for any stationary probability measure $\rho$. If $\varphi$ is an open mapping RDS, then $\varphi_x^t(A_s)=1$ for all $x \in A_s$ and $t \in \mathbb{T}^+$.
\end{prop}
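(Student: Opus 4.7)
My plan is to address the three assertions in turn, with the measure-zero statement being the main substance.

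\emph{Openness and backward-invariance of $U_{\mathit{ps}}$.} For openness, note that if $x \in U_{\mathit{ps}}$ then there is a neighbourhood $U$ of $x$ with $\mathbb{P}(E_U)>0$, and this same $U$ witnesses $P_0(y) \geq \mathbb{P}(E_U)>0$ for every $y \in U$, so $U \subset U_{\mathit{ps}}$. For backward-invariance I would use Lemma~2.2.6: whenever $P_0(x)=0$, the inequality forces $\int P_0(y)\,\varphi_x^t(dy) = 0$, so $\varphi_x^t$ is concentrated on the closed set $X \setminus U_{\mathit{ps}} = \{P_0 = 0\}$, giving its forward-invariance via the characterisation of forward-invariance stated after Definition~1.2.2.

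\emph{The measure-zero assertion.} The key step is the case where $\rho$ is ergodic. By Lemma~2.2.3, $O$ is $(\Theta^t)$-backward-invariant, so $\Theta^{-t}(O) \setminus O = \emptyset$; meanwhile, invariance of $\mathbb{P}|_{\mathcal{F}_\infty} \otimes \rho$ under $(\Theta^t)$ combined with $\Theta^{-t}(O) \subset O$ gives $(\mathbb{P}|_{\mathcal{F}_\infty} \otimes \rho)(O \setminus \Theta^{-t}(O)) = 0$. Hence $O$ is $(\mathbb{P}|_{\mathcal{F}_\infty} \otimes \rho)$-almost invariant in the sense of Definition~1.2.4. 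If $\rho$ is ergodic, this forces $(\mathbb{P}|_{\mathcal{F}_\infty} \otimes \rho)(O) \in \{0,1\}$; since this quantity equals $\int_X P_0 \, d\rho$ and $P_0 \in [0,1]$, we deduce $P_0 \in \{0,1\}$ $\rho$-almost everywhere, which is exactly $\rho(U_{\mathit{ps}} \setminus A_s)=0$. For a general stationary $\rho$, I would argue by contradiction using Lemma~1.3.3: supposing $\rho(U_{\mathit{ps}} \setminus A_s)>0$, the choice $h_1 \equiv 0$ and $h_2 = \mathbbm{1}_{U_{\mathit{ps}} \setminus A_s}$ would produce an ergodic $\rho'$ with $\rho'(U_{\mathit{ps}} \setminus A_s)>0$, contradicting the ergodic case just settled.

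\emph{The open-mapping refinement.} Here Lemma~2.2.6 strengthens to the equality $P_0(x) = \int_X P_0(y)\,\varphi_x^t(dy)$. If $x \in A_s$, so $P_0(x)=1$, then since $P_0 \leq 1$ we must have $P_0(y)=1$ for $\varphi_x^t$-almost every $y$, i.e.\ $\varphi_x^t(A_s)=1$.

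The only genuinely nontrivial step is the $0$--$1$ argument in the ergodic case; everything else falls out of Lemmas~2.2.3, 2.2.6, and~1.3.3. The one point to be careful about is the bookkeeping between $\mathbb{P}$ and $\mathbb{P}|_{\mathcal{F}_\infty}$ when invoking ergodicity in the sense of Definition~1.3.1, but this is routine once one recalls that $O$ is $(\mathcal{F}_\infty \otimes \mathcal{B}(X))$-measurable.
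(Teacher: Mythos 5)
Your argument is correct and follows essentially the same route as the paper's proof: openness from the definition of $P_r$, backward-invariance from the integral inequality, the $0$--$1$ dichotomy for ergodic measures coming from the $(\Theta^t)$-backward-invariance of $O$, reduction of the general stationary case to the ergodic case via Lemma~1.3.3, and the open-mapping refinement from the equality case of the same integral inequality. (One small slip: the integral inequality $P_0(x) \geq \int_X P_0(y)\,\varphi_x^t(dy)$ is Lemma~2.2.5, not Lemma~2.2.6.)
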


\begin{proof}
For any $x \in X$ and $r>0$ with $P_r(x)>0$, we clearly have that $B_r(x) \subset U_\mathit{\!ps}$. So $U_\mathit{\!ps}$ is open;  Lemma~2.2.5 then gives that $U_\mathit{\!ps}$ is backward-invariant. For any ergodic probability measure $\rho'$, either $\mathbb{P} \otimes \rho'(O)=0$ or $\mathbb{P} \otimes \rho'(O)=1$; in the former case, $\rho'(U_\mathit{\!ps})=\rho'(A_s)=0$, and in the latter case, $\rho'(U_\mathit{\!ps})=\rho'(A_s)=1$. So $\rho'(U_\mathit{\!ps} \setminus A_s)=0$ for every ergodic probability measure $\rho'$, and hence by Lemma~1.3.3, $\rho(U_\mathit{\!ps} \setminus A_s)=0$ for every stationary probability measure $\rho$. If $\varphi$ is an open mapping RDS then Lemma~2.2.5 gives that $\varphi_x^t(A_s)=1$ for all $x \in A_s$ and $t \in \mathbb{T}^+$.
\end{proof}

\noindent We now go on to consider ``sets admitting stable trajectories''.

\begin{lemma}
For any $A \subset X$, the set
\[ O_A \ := \ \{ \omega \in \Omega \, : \, \exists \, x \in A \ \textrm{\emph{s.t.}} \ (\omega,x) \textrm{\emph{ is asymptotically stable}} \}  \ = \ \bigcup_{x \in A} O_x \]
\noindent is $\mathcal{F}_\infty$-measurable.
\end{lemma}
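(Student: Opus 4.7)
The plan is to exploit the countable base representation of $O$ already given in the proof of Lemma~2.2.3 to rewrite the potentially uncountable union $\bigcup_{x\in A} O_x$ as a countable union of sets $E_V$.

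Fix a countable base $\mathcal{U}$ for the topology on $X$. Recall that for each $x\in X$, $O_x = \bigcup_{n=1}^\infty E_{B_{1/n}(x)}$, and more generally, the identity $O = \bigcup_{V\in\mathcal{U}} E_V\times V$ established in Lemma~2.2.3 says that $(\omega,x)\in O$ if and only if there exists $V\in\mathcal{U}$ with $x\in V$ and $\omega\in E_V$. The key step is to use this to observe that $\omega\in O_A$ if and only if there exists $V\in\mathcal{U}$ with $V\cap A\neq\emptyset$ and $\omega\in E_V$: the ``only if'' direction is immediate from the above, while for the ``if'' direction, any $x\in V\cap A$ gives a point of $A$ with $(\omega,x)\in O$, so that $\omega\in O_x\subset O_A$.

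Hence we obtain the identity
\[
O_A \; = \bigcup_{\substack{V\in\mathcal{U} \\ V\cap A\neq\emptyset}} E_V,
\]
which exhibits $O_A$ as a countable union of sets $E_V$, each of which is $\mathcal{F}_\infty$-measurable by Lemma~2.2.3. Therefore $O_A\in\mathcal{F}_\infty$.

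I do not anticipate any real obstacle here; the entire content of the lemma is the reduction from an uncountable union indexed by $A$ to a countable union indexed by those members of a fixed countable base that meet $A$. The measurability of each $E_V$ is already in hand, and no assumption on $A$ beyond being a subset of $X$ is needed.
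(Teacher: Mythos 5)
Your proof is correct and is essentially identical to the paper's: both express $O_A$ as the countable union $\bigcup_{V \in \mathcal{U},\, V \cap A \neq \emptyset} E_V$ over members of a countable base meeting $A$, and invoke the $\mathcal{F}_\infty$-measurability of each $E_V$ from Lemma~2.2.3. You simply spell out the equivalence a bit more explicitly than the paper, which treats it as immediate.
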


\begin{proof}
Let $\mathcal{U}$ be a countable base for the topology on $X$, and let
\[ \mathcal{U}_A \ := \ \{U \in \mathcal{U} \, : \, U \cap A \neq \emptyset\}. \]
\noindent It is clear that
\[ O_A \ = \ \bigcup_{U \in \, \mathcal{U}_A} E_U, \]
\noindent giving the result.
\end{proof}

\begin{defi}
We will say that a closed forward-invariant set $K \subset X$ \emph{admits stable trajectories} if $\,\mathbb{P}(O_K) > 0$.
\end{defi}

\noindent It is not hard to show that if $\varphi$ is an \emph{open mapping} RDS then for any closed forward-invariant set $K \subset X$, $O_K$ is $\mathbb{P}$-almost invariant, and hence $K$ admits stable trajectories if and only if $\,\mathbb{P}(O_K) = 1$.

\begin{rmk}[Sufficient test for stable trajectories]
Suppose either that (a)~$X$ is a compact Riemannian manifold and $\varphi$ is a smooth RDS on $X$, or that (b)~$X$ is a compact geodesically convex subset of a Riemannian manifold $\tilde{X}$, with $\varphi$ being the $X$-restriction of some smooth RDS $\tilde{\varphi}$ on $\tilde{X}$ such that $\tilde{\varphi}(t,\omega)X \subset X$ for all $t$ and $\omega$. Let $\rho$ be an ergodic probability measure. Provided the sizes of the spatial partial derivatives of $\varphi(t,\omega)$ at $x$ are sufficiently well controlled in $(t,\omega,x)$ (over bounded ranges of $t$), we have the following test for stability: if there exists $t \in \mathbb{T}^+$ such that
\[ \Lambda_t^\rho := \int_{\Omega \times X} \! \log|| \mathrm{d}\varphi(t,\omega)_x || \, \mathbb{P} \otimes \rho(d(\omega,x)) \ < \ 0 \]
\noindent then $\rho$-almost every point in $X$ is almost surely (exponentially) stable. (This is equivalent to saying that the ``maximal Lyapunov exponent'', given by $\lim_{t \to \infty}\frac{1}{t} \log|| \mathrm{d}\varphi(t,\omega)_x ||$ for $(\mathbb{P} \otimes \rho)$-almost any $(\omega,x)$, is negative; for if $\Lambda_t^\rho<0$ for some $t$ then $\Lambda_u^\rho<0$ for all $u \geq t$.) Precise conditions for the test can be found in [LeJan87] (specifically condition (H2)\footnote{As in footnote~12, beware that in [LeJan87] the characters $||^2$ are missing from the end of the denominator in the formula for $\delta_2(T)$.} at the start of Section~3, with the proof\footnote{It is worth emphasising that, although [LeJan87] works with random \emph{diffeomorphisms}, the first of the two proofs given for Lemme~3 remains completely valid for non-invertible random smooth maps.} given in Lemme~3) in the case of discrete time; or for SDEs, see [Car85]\footnote{Note that in [Car85], the term ``Lyapunov stability'' is used to refer to strict negativity of all Lyapunov exponents.} (in particular, Proposition~2.2.3). Now if we did \emph{not} assume $\rho$ to be ergodic but only stationary, the conclusion of the test would no longer be that $\rho$-almost every point is almost surely stable---rather: given $t \in \mathbb{T}^+$ such that $\Lambda_t^\rho<0$, there will exist (by Lemma~1.3.3) an ergodic probability measure $\rho'$ such that $\rho'(\mathrm{supp}\,\rho)=1$ and $\Lambda_t^{\rho'}<0$, and so we conclude that $\mathrm{supp}\,\rho$ contains at least one point that is almost surely stable.
\end{rmk}

\begin{lemma}
A closed forward-invariant set $K$ admits stable trajectories if and only if there is a point in $K$ that is potentially stable. If $K$ is minimal then $K$ admits stable trajectories if and only if there is a point in $K$ that is almost surely stable.
\end{lemma}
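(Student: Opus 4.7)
The plan is to handle the two equivalences separately, with the first being essentially formal and the second requiring the structure theory of stationary measures developed in Section~1.3.

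For the first equivalence, one direction is immediate: if $x \in K$ is potentially stable, then $\mathbb{P}(O_K) \geq \mathbb{P}(O_x) = P_0(x) > 0$. For the converse, I would exploit the countable representation established inside the proof of Lemma~2.2.11: fixing a countable base $\mathcal{U}$ for the topology on $X$ and letting $\mathcal{U}_K := \{U \in \mathcal{U} : U \cap K \neq \emptyset\}$, we have $O_K = \bigcup_{U \in \, \mathcal{U}_K} E_U$. Since $\mathbb{P}(O_K) > 0$ and the union is countable, some $U \in \mathcal{U}_K$ satisfies $\mathbb{P}(E_U) > 0$. Picking any $x \in U \cap K$, the open set $U$ is a neighbourhood of $x$ that contracts under every $\omega \in E_U$, so $E_U \subset O_x$ by Definition~2.2.1, whence $P_0(x) > 0$.

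For the second equivalence, assuming $K$ is minimal, the direction ``almost surely stable $\Rightarrow$ admits stable trajectories'' is immediate (almost surely stable implies potentially stable, then apply the first part). The real content is the converse: given $\mathbb{P}(O_K) > 0$, I want to find $x \in K$ with $P_0(x) = 1$. By the first part already proved, there is at least one potentially stable point in $K$, so the open set $U_\mathit{ps}$ of potentially stable points (introduced in Proposition~2.2.10) satisfies $U_\mathit{ps} \cap K \neq \emptyset$. Now invoke Corollary~1.3.5 to produce an ergodic stationary probability measure $\rho$ with $\mathrm{supp}\,\rho = K$; since $U_\mathit{ps}$ is open and meets $K = \mathrm{supp}\,\rho$, we get $\rho(U_\mathit{ps}) > 0$. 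Proposition~2.2.10 then gives $\rho(U_\mathit{ps} \setminus A_s) = 0$, so $\rho(A_s) \geq \rho(U_\mathit{ps} \cap A_s) = \rho(U_\mathit{ps}) > 0$. Since $\rho$ is supported on $K$, this forces $A_s \cap K \neq \emptyset$, producing the desired almost surely stable point.

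The only subtle step is the last one, where I must lift ``potentially stable point exists in $K$'' to ``almost surely stable point exists in $K$''; this is precisely where minimality of $K$ is essential, as it is what allows the existence of an ergodic measure with full support on $K$ via Corollary~1.3.5 and thereby lets me transfer an open-set statement about $U_\mathit{ps}$ into a measure-theoretic statement about $A_s$. Without minimality, the potentially stable point found in $K$ might lie off the support of every ergodic measure concentrated on $K$, blocking the upgrade.
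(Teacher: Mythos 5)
Your proof is correct and proves both parts, but it does so by a route that is noticeably different from the paper's, and it is worth comparing them. For the first equivalence, the paper simply cites Proposition~2.2.8 (the ``$\mathbb{P}$-a.e.~$\omega$: asymptotically stable at $x$ implies $x$ potentially stable'' result); you instead unwind the countable representation $O_K = \bigcup_{U \in \mathcal{U}_K} E_U$ from the proof of the measurability lemma for $O_A$ and pick a $U$ of positive measure directly. Both are clean, and they share the same underlying idea of reducing to a countable base. For the second equivalence, the paper uses minimality twice: once to produce the ergodic $\rho$ with $\mathrm{supp}\,\rho = K$ (Corollary~1.3.5), and once via the backward-invariance of $U_{\mathit{ps}}$ to conclude $K \subset U_{\mathit{ps}}$, after which $\mathbb{P}\otimes\rho(O) > 0$ is immediate and the ergodicity/backward-invariance of $O$ forces $\mathbb{P}\otimes\rho(O) = 1$. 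You instead use minimality only to produce $\rho$, then obtain $\rho(U_{\mathit{ps}})>0$ softly from ``open set meets the support,'' and outsource the ergodicity work to the already-proved assertion $\rho(U_{\mathit{ps}}\setminus A_s)=0$. This is a genuine simplification of the proof \emph{text}: you never need to prove that every point of $K$ is potentially stable, and you make no explicit appeal to the backward-invariance of $U_{\mathit{ps}}$ or to the backward-invariance of $O$ under $(\Theta^t)$ (those facts are hidden inside the cited Proposition). The trade-off is that the paper's route yields, en passant, the stronger conclusion that $\rho$-almost every point is almost surely stable, which foreshadows and motivates Theorem~2.2.14; your version proves only the existence statement that the Lemma literally asserts. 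One cosmetic issue: your references are off by one --- the representation $O_A = \bigcup_{U\in\mathcal{U}_A}E_U$ is Lemma~2.2.10, and the set $U_{\mathit{ps}}$ together with the identity $\rho(U_{\mathit{ps}}\setminus A_s)=0$ is Proposition~2.2.9, not 2.2.11 and 2.2.10 respectively.
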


\noindent We should say immediately that in the case where $K$ is minimal, a much stronger statement can be made (Theorem~2.2.14), but we will need to prove our above weaker statement first.

\begin{proof}
The first assertion follows immediately from Proposition~2.2.8. Now suppose that $K$ is minimal and admits stable trajectories. By Corollary~1.3.5 there exists an ergodic probability measure $\rho$ with $\mathrm{supp}\,\rho=K$. Since there is at least one potentially stable point in $K$ and the set of all potentially stable points is an open backward-invariant set (Proposition~2.2.9), it follows (by the minimality of $K$) that every point in $K$ is potentially stable. Hence $\mathbb{P} \otimes \rho(O)>0$, and therefore (by the $(\Theta^t)$-backward-invariance of $O$, as proved in Lemma~2.2.3) $\mathbb{P} \otimes \rho(O)=1$. So $\rho$-almost every point is almost surely stable.
\end{proof}

\begin{thm}
If $K$ is a minimal set admitting stable trajectories, then every point in $K$ is almost surely stable. If, in addition, $K$ is the only minimal set, then every point in $X$ is almost surely stable, and for each $x \in X$,
\[ \mathbb{P}(\omega : d(\varphi(t,\omega)x,K) \to 0 \textrm{ as } t \to \infty) \ = \ 1. \]
\end{thm}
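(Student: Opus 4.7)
The plan is to boost Lemma~2.2.13 (existence of \emph{one} almost surely stable point in $K$) to all of $K$, and then to $X$, by propagating stability along trajectories that accumulate near this ``seed'' point; the key tools are the accessibility supplied by Corollary~1.2.9 and the memoryless property of the noise.

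For the first assertion, fix $y \in K$ and $\varepsilon>0$, and let $x_0 \in K$ be almost surely stable, as furnished by Lemma~2.2.13. Since $P_r(x_0) \to 1$ as $r \to 0$, pick $r>0$ with $\mathbb{P}(E_{B_r(x_0)}) > 1 - \varepsilon/2$. By Corollary~1.2.9 applied to $U=B_{r/2}(x_0)$ (taking the accessing times in $\mathbb{Q}$, as noted after that corollary) together with a countable union argument, find rationals $0 \leq t_1 < \ldots < t_N$ such that the events $B_i := \{\omega : \varphi(t_i,\omega)y \in B_{r/2}(x_0)\} \in \mathcal{F}_{t_i}$ satisfy $\mathbb{P}(\bigcup_i B_i) > 1 - \varepsilon/2$. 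Disjointify by setting $\tilde{B}_i := B_i \setminus \bigcup_{j<i} B_j \in \mathcal{F}_{t_i}$ and $\tilde{C}_i := \tilde{B}_i \cap \theta^{-t_i}(E_{B_r(x_0)})$. Since $E_{B_r(x_0)} \in \mathcal{F}_\infty$ by Lemma~2.2.3, the memoryless property together with $(\theta^t)$-invariance of $\mathbb{P}$ gives $\mathbb{P}(\tilde{C}_i) = \mathbb{P}(\tilde{B}_i)\,\mathbb{P}(E_{B_r(x_0)})$, so summing over $i$ yields $\mathbb{P}(\bigcup_i \tilde{C}_i) > (1-\varepsilon/2)^2 > 1-\varepsilon$. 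On $\tilde{C}_i$, continuity of $\varphi(t_i,\omega)$ produces an open neighbourhood $V$ of $y$ with $\varphi(t_i,\omega)V \subset B_r(x_0)$, and then the cocycle identity combined with $\theta^{t_i}\omega \in E_{B_r(x_0)}$ forces $\mathrm{diam}(\varphi(s+t_i,\omega)V) \to 0$ as $s \to \infty$, so $(\omega,y) \in O$. Hence $\mathbb{P}(O_y) > 1-\varepsilon$; letting $\varepsilon \to 0$ gives $\mathbb{P}(O_y) = 1$.

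For the second assertion, assume $K$ is the only minimal set. The very same construction goes through verbatim for any $x \in X$, since the second half of Corollary~1.2.9 now guarantees accessibility from $x$ (not merely from points in $K$); this shows every $x \in X$ is almost surely stable. For the convergence $d(\varphi(t,\omega)x,K) \to 0$, further intersect $\tilde{C}_i$ with the almost-sure forward-invariance event $\{\forall s \in \mathbb{T}^+, \varphi(s,\cdot)K \subset K\}$, which by $(\theta^t)$-invariance of $\mathbb{P}$ also forces $\varphi(s,\theta^{t_i}\omega)K \subset K$ for all $s$. On this intersection both $\varphi(t_i,\omega)x$ and $x_0$ lie in $B_r(x_0)$, so applying $\varphi(s,\theta^{t_i}\omega)$ and using the cocycle gives
\[ d(\varphi(s+t_i,\omega)x,\,\varphi(s,\theta^{t_i}\omega)x_0) \ \leq \ \mathrm{diam}(\varphi(s,\theta^{t_i}\omega)B_r(x_0)) \ \to \ 0; \]
since $\varphi(s,\theta^{t_i}\omega)x_0 \in K$, this yields $d(\varphi(s+t_i,\omega)x,K) \to 0$. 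Thus the claimed convergence holds with probability exceeding $1-\varepsilon$, hence almost surely.

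The main technical obstacle is that the time at which $y$ (or $x$) first approaches $x_0$ is \emph{random}, so one cannot directly invoke the memoryless property at a fixed deterministic time; the disjointification into events $\tilde{B}_i \in \mathcal{F}_{t_i}$ serves as the simplest substitute for a strong Markov property, ensuring that independence between $\tilde{B}_i$ and $\theta^{-t_i}E_{B_r(x_0)} \in \theta^{-t_i}\mathcal{F}_\infty$ can be applied cleanly.
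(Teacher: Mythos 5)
Your proof is correct and follows essentially the same approach as the paper's: both use Corollary~1.2.9 to produce hitting events at rational times, disjointify them, and apply the memoryless property across the random hitting time to splice in the contraction event at $x_0$. The only difference is that the paper packages the disjointification-plus-independence step into a standalone Lemma~2.2.15 (which it reuses in the proof of Theorem~3.2.1), whereas you carry out that argument inline.
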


\noindent In the proof of Theorem~2.2.14 (and also in the next section), we will use the following elementary lemma (which, heuristically, will play the role of the strong Markov property) in conjunction with Corollary~1.2.9 (which, heuristically, will generate a random time at which to apply the strong Markov property):

\begin{lemma}
Let $(D,\leq)$ be a countable totally ordered set. Suppose we have, for each $s \in D$ and $n \in \mathbb{N}$, events $\,R_{n,s\,} , S_{n,s} \in \mathcal{F}$ with the following properties:
\begin{itemize}
\item for all $n$ and $s$, $S_{n,s}$ is independent of $\sigma(R_{n,t}:t \leq s)$;
\item for all $n$, $\mathbb{P}(\bigcup_{s \in D} R_{n,s}) \ = \ 1$;
\item $\inf_{s \in D} \mathbb{P}(S_{n,s}) \, \to \, 1$ as $n \to \infty$.
\end{itemize}
\noindent Then
\[ \mathbb{P}\left( \bigcup_{n=1}^\infty \bigcup_{s \in D} R_{n,s} \cap S_{n,s} \right) \ = \ 1. \]
\end{lemma}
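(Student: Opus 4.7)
The plan is to fix $n$ and bound $\mathbb{P}\!\left(\bigcup_{s\in D} R_{n,s}\cap S_{n,s}\right)$ from below by $\inf_{s\in D}\mathbb{P}(S_{n,s})$; since this infimum tends to $1$, taking the union over $n$ at the end will give the result. The device that makes this work is disjointification of the events $R_{n,s}$ using the order $\leq$ on $D$, which is precisely set up so that the disjointified pieces lie in the $\sigma$-algebras with respect to which the $S_{n,s}$ are assumed independent.

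Concretely, I would first work with an arbitrary finite subset $D_0 = \{s_1 < s_2 < \cdots < s_k\} \subset D$ and define
\[
\tilde R_{n,s_i} \ := \ R_{n,s_i} \setminus \bigcup_{j<i} R_{n,s_j}.
\]
By construction $\tilde R_{n,s_i} \in \sigma(R_{n,t} : t \leq s_i)$, so the hypothesis gives that $\tilde R_{n,s_i}$ is independent of $S_{n,s_i}$. Moreover the sets $\tilde R_{n,s_i}$ are pairwise disjoint with the same union as the $R_{n,s_i}$ over $i \leq k$. Therefore
\[
\mathbb{P}\!\left(\bigcup_{i=1}^k R_{n,s_i}\cap S_{n,s_i}\right)
\ \geq \ \sum_{i=1}^k \mathbb{P}(\tilde R_{n,s_i})\,\mathbb{P}(S_{n,s_i})
\ \geq \ \Bigl(\inf_{s\in D}\mathbb{P}(S_{n,s})\Bigr)\,\mathbb{P}\!\left(\bigcup_{i=1}^k R_{n,s_i}\right).
\]
Exhausting $D$ by an increasing sequence of such finite $D_0$ and applying monotone continuity of $\mathbb{P}$, together with the hypothesis $\mathbb{P}\!\left(\bigcup_{s\in D} R_{n,s}\right)=1$, yields
\[
\mathbb{P}\!\left(\bigcup_{s\in D} R_{n,s}\cap S_{n,s}\right) \ \geq \ \inf_{s\in D}\mathbb{P}(S_{n,s}).
\]

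Finally, since the left-hand side above is a lower bound for $\mathbb{P}\!\left(\bigcup_{n=1}^\infty\bigcup_{s\in D} R_{n,s}\cap S_{n,s}\right)$ for every $n$, and the right-hand side converges to $1$ as $n\to\infty$, the desired conclusion follows. The only point that requires any care is that the disjointification must be carried out in the order given by $\leq$ on $D$ (not in the enumeration order of some arbitrary bijection $D \cong \mathbb{N}$): it is precisely the inclusion $\tilde R_{n,s_i}\in \sigma(R_{n,t}:t\leq s_i)$ that activates the independence hypothesis, and this inclusion is what breaks without the correct ordering. No additional regularity of $D$ (such as well-foundedness) is needed, because we only ever handle finite subsets.
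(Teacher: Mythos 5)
Your proof is correct and is essentially the same argument as the paper's: fix $n$, restrict to a finite subset of $D$, disjointify the $R_{n,s}$ in the order given by $\leq$ so that each disjointified piece lies in $\sigma(R_{n,t}:t\le s)$ and hence is independent of $S_{n,s}$, bound the probability below by $\inf_s\mathbb{P}(S_{n,s})\cdot\mathbb{P}(\bigcup_i R_{n,s_i})$, and pass to the limit over finite subsets. The only cosmetic difference is that the paper phrases the passage to the limit via an $\varepsilon$-argument (choosing a finite $\{t_1<\cdots<t_m\}$ with $\mathbb{P}(\bigcup_i R_{n,t_i})>1-\varepsilon$), whereas you invoke monotone continuity along an exhaustion; these are interchangeable.
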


\begin{proof}
First fix $n \in \mathbb{N}$. Since $\mathbb{P}(\bigcup_{s \in D} R_{n,s}) = 1$ and $D$ is countable, we must have that for all $\varepsilon>0$ there exist $t_1 < \ldots < t_m$ in $D$ such that $\mathbb{P}(\bigcup_{i=1}^m R_{n,t_i}) > 1-\varepsilon$, and so
\begin{align*}
\mathbb{P}\left( \bigcup_{s \in D} R_{n,s} \cap S_{n,s} \right) \ &\geq \mathbb{P}\left( \bigcup_{i=1}^m R_{n,t_i} \cap S_{n,t_i} \right) \\
&\geq \ \sum_{i=1}^m \, \mathbb{P}\left(R_{n,t_i} \setminus \bigcup_{j=1}^{i-1} R_{n,t_j} \right)\mathbb{P}(S_{n,t_i}) \\
&\geq \ \left( \sum_{i=1}^m \, \mathbb{P}\left(R_{n,t_i} \setminus \bigcup_{j=1}^{i-1} R_{n,t_j} \right) \right) \inf_{s \in D} \, \mathbb{P}(S_{n,s}) \\
&= \ \mathbb{P}\left( \bigcup_{i=1}^m R_{n,t_i} \right) \inf_{s \in D} \, \mathbb{P}(S_{n,s}) \\
&\geq \ (1-\varepsilon)\inf_{s \in D} \, \mathbb{P}(S_{n,s}).
\end{align*}
\noindent This is true for all $\varepsilon$, and so
\[ \mathbb{P}\left( \bigcup_{s \in D} R_{n,s} \cap S_{n,s} \right) \ \geq \ \inf_{s \in D} \, \mathbb{P}(S_{n,s}). \]
\noindent The desired result then follows from the fact that $\inf_{s \in D} \mathbb{P}(S_{n,s}) \, \to \, 1$ as $n \to \infty$.
\end{proof}

\begin{proof}[Proof of Theorem~2.2.14]
Let $p \in K$ be an almost surely stable point. Fix any $x \in K$, and for each $n \in \mathbb{N}$ and $s \in \mathbb{Q} \cap \mathbb{T}^+$ let
\begin{align*}
R_{n,s} \ &= \ \left\{ \omega \in \Omega \, : \, \varphi(s,\omega)x \in B_{\frac{1}{n}}(p) \right\} \\
S_{n,s} \ &= \ \theta^{-s}\left(E_{B_{\!\frac{1}{n}\!}(p)}\right).
\end{align*}
\noindent Corollary~1.2.9 gives that $\mathbb{P}\left(\bigcup_s R_{n,s}\right)=1$ for all $n$. Obviously $\mathbb{P}(S_{n,s})=P_{\frac{1}{n}}(p)$ for all $n$ (independently of $s$), and so $\mathbb{P}(S_{n,s}) \to 1$ as $n \to \infty$ (uniformly in $s$). It is clear that $R_{n,s} \cap S_{n,s} \subset O_x$ for all $n$ and $s$, and so Lemma~2.2.15 yields that $\mathbb{P}(O_x)=1$, i.e. $x$ is almost surely stable.
\\ \\
Now if $K$ is the only minimal set, then (by the second assertion in Corollary~1.2.9) in the above we can take any $x \in X$ (rather than just $x \in K$). Moreover, given any $n$ and $s$, for any $\omega \in R_{n,s} \cap S_{n,s}$ we have that $d(\varphi(t,\omega)x,\varphi(t-s,\theta^s\omega)p) \to 0$ as $t \to \infty$; also, for $\mathbb{P}$-almost every $\omega \in \Omega$ we have that $\varphi(t-s,\theta^s\omega)p \in K$ for any (rational) $s$ and all $t \geq s$. Hence Lemma~2.2.15 yields that for $\mathbb{P}$-almost all $\omega$, $d(\varphi(t,\omega)x,K) \to 0$ as $t \to \infty$.
\end{proof}

\begin{rmk}
Note that, as a consequence of Theorem~2.2.14, if $\varphi$ is uniquely ergodic and $\mathbb{P} \otimes \rho(O)=1$ (where $\rho$ is the unique stationary probability measure), then every point in $X$ is almost surely stable.
\end{rmk}

\subsection{Stable synchronisation}

\begin{defi}
We will say that $\varphi$ is (\emph{globally}) \emph{stably synchronising}, or \emph{asymptotically stable in the large}, if $\varphi$ is synchronising and every point in $X$ is almost surely stable.
\end{defi}

\noindent Note once again that, due to the compactness of $X$, this is independent of the metrisation $d$ of the topology on $X$. Also note (as mentioned in Section~0.2.2) that by Remark~2.2.7, this definition is equivalent to the one given in Section~0.1.4. Moreover (as mentioned in Section~0.2.1), by Remark~2.2.16, $\varphi$ is stably synchronising if and only if $\varphi$ is synchronising and $\mathbb{P} \otimes \rho(O)=1$ (where $\rho$ is the unique stationary probability measure).

\begin{rmk}[Relation to invariant measures and random fixed points]
Suppose that for all $t \in \mathbb{T}^+$, $\theta^t$ is $\mathcal{F}$-measurably invertible. Let $(\mu_\omega)_{\omega \in \Omega}$ be a random probability measure on $X$. $(\mu_\omega)$ is said to be an \emph{invariant (random probability) measure} if for each $t \in \mathbb{T}^+$, for $\mathbb{P}$-almost all $\omega$, $\varphi(t,\omega)_\ast\mu_\omega = \mu_{\theta^t\omega}$. If, for some random variable $a:\Omega \to X$, the random probability measure $(\delta_{a(\omega)})$ is an invariant measure, then we refer to $a$ as a \emph{(random) fixed point} of $\varphi$. Now a random probability measure $(\mu_\omega)$ on $X$ (resp.~a random variable $b:\Omega \to X$) is said to be \emph{past-measurable} if the map $\omega \mapsto \mu_\omega$ (resp.~$\omega \to b(\omega)$) is measurable with respect to $\sigma(\theta^t\mathcal{F}_t:t \in \mathbb{T}^+)$. It is well-known (see e.g.~Lemme~1(a) of [LeJan87] or Theorem~4.2.9(ii) of [KS12]) that for any stationary probability measure $\rho$ there exists an associated past-measurable invariant measure $(\mu_\omega^\rho)$ given by
\[ \mu_\omega^\rho \ \overset{\mathbb{P}\textrm{-a.s.}(\omega)}{=} \ \lim_{n \to \infty} \varphi(t_n,\theta^{-t_n}\omega)_\ast\rho \]
\noindent where $(t_n)_{n \in \mathbb{N}}$ may be any unbounded increasing sequence in $\mathbb{T}^+$ and the limit is taken in the narrow topology.\footnote{The proof of Theorem~4.2.9(ii) of [KS12] does not exclude the possibility that the exceptional $\mathbb{P}$-null set depends on the sequence $(t_n)$. However, one can show that if the stochastic process $\left( \int_X g(\varphi(t,\theta^{-t\,}\boldsymbol{\cdot})x) \, \rho(dx) \right)_{\!t \geq 0}$ is a separable stochastic process for every continuous $g:X \to \mathbb{R}$ (e.g.~if the map $t \mapsto \varphi(t,\theta^{-t}\omega)x$ is left-continuous for all $x \in X$ and $\omega \in \Omega$), then the exceptional set does not depend on $(t_n)$; in other words, in this case, we can replace ``$\underset{n \to \infty}{\lim}\ldots(t_n)$'' with ``$\underset{t \to \infty}{\lim}\ldots(t)$''.} Unless $\rho$ is ``too unstable'' (in the heuristic sense that trajectories within $\mathrm{supp}\,\rho$ can escape well away from $\mathrm{supp}\,\rho$ by small perturbations), we can (heuristically) regard $\,\mathrm{supp}\,\mu_\omega^\rho\,$ as a kind of ``random attractor''; in the particular case that $\mu_\omega^\rho=\delta_{a(\omega)}$ for some random fixed point $a:\Omega \to X$, we may refer to $a$ as an ``attracting random fixed point'' or ``random point attractor''. (There are various more precise definitions of ``random attractors'' and ``random point attractors''; see e.g.~[AO03] or [Crau01]). Now it is not hard to show that if $\varphi$ is synchronising then the past-measurable invariant measure $\mu_\omega$ associated to the unique stationary probability measure must be a Dirac mass $\mathbb{P}$-almost surely---i.e.~it must be supported on a random fixed point. So if $\varphi$ is stably synchronising, then we can regard this fixed point as a random point attractor. (Indeed, this ``random point attractor'' will be an attractor in the rather strong sense that it is forward-time asymptotically stable and its basin of attraction includes a dense open set.)
\end{rmk}

\section{Necessary and Sufficient Conditions for Stable Synchronisation}

\subsection{Two-point contractibility}

\noindent For any $A \subset X$, let $\Delta_A:=\{(x,x): x \in A\} \subset X \times X$.

\begin{defi}
Given points $x,y \in X$, we will say that the pair $(x,y)$ is \emph{contractible under $\varphi$} if every neighbourhood of $\Delta_X$ is accessible from $(x,y)$ under $\varphi \! \times \! \varphi$. Given a closed~forward-invariant set $K \subset X$, we will say that \emph{$\varphi$ is two-point contractible on $K$} if for all $x,y \in K$, $(x,y)$ is contractible under $\varphi$.
\end{defi}

\noindent Note that since $\Delta_X$ is compact, any neighbourhood of $\Delta_X$ is in fact a uniform neighbourhood, and hence we have that $(x,y)$ is contractible under $\varphi$ if and only if for all $\varepsilon>0$,
\[ \mathbb{P}( \, \omega \, : \, \exists \, t \in \mathbb{T}^+ \, \textrm{ s.t. } d(\varphi(t,\omega)x,\varphi(t,\omega)y) \, < \, \varepsilon \, ) \ > \ 0. \]
\noindent Furthermore, it is easy to show (using Lemma~1.2.3 applied to $\varphi \! \times \! \varphi$) that for any closed forward-invariant $K \subset X$, $\varphi$ is two-point contractible on $K$ if and only if for all $x,y \in K$ with $x \neq y$,
\[ \mathbb{P}( \, \omega \, : \, \exists \, t \in \mathbb{T}^+ \, \textrm{ s.t. } d(\varphi(t,\omega)x,\varphi(t,\omega)y) \, < \, d(x,y) \, ) \ > \ 0. \]
\noindent This is, in turn, equivalent to saying that there are no non-empty forward-invariant compact subsets of $(K \times K) \setminus \Delta_K$ (which is essentially the formulation given in [BS88] and [Bax91]).
\\ \\
Take a sequence of values $\varepsilon_n$ decreasing to 0; if we apply Lemma~1.2.7 to the two-point motion $\varphi \! \times \! \varphi$, replacing $K$ with $\{(u,v) \in K \times K : d(u,v) \geq \varepsilon_n \}$, we obtain the following important fact: \emph{If $\varphi$ is two-point contractible on $K$ then for every $x,y \in K$,}
\[ \mathbb{P}( \, \omega \, : \, \exists \, \textrm{unbounded } (t_n)_{n \in \mathbb{N}} \textrm{ in } \mathbb{T}^+ \textrm{ s.t. } d(\varphi(t_n,\omega)x,\varphi(t_n,\omega)y) \to 0 \textrm{ as } n \to \infty \, ) \ = \ 1. \]

\noindent (See also Proposition~4.1 of [BS88].)
\\ \\
It is worth observing that if $\varphi$ is two-point contractible on the whole of $X$ then any two non-empty closed forward-invariant subsets of $X$ must have non-trivial intersection, and so there must be a unique minimal set.
\\ \\ 
Now there may be contexts in which it is not directly clear that $\varphi$ is two-point contractible on the whole of a minimal set $K$, and yet where it is directly clear that (for some given stationary probability measure supported by $K$) $\varphi$ is two-point contractible on \emph{almost} the whole of $K$ (e.g.~when, for some $t$, the support of the $C(X,X)$-valued random variable $\omega \mapsto \varphi(t,\omega)$ includes a map with a fixed point in $K$ whose basin of attraction includes all but a zero-measure set in $K$; see, for example, Theorem~1.1 of [Hom13]). For such cases, we have the following:

\begin{prop}
Suppose $\rho$ is a stationary probability measure with $K:=\mathrm{supp}\,\rho$ being minimal. Suppose also that there is a set $A \subset K$ such that
\begin{itemize}
\item $\rho(A)=1$;
\item the interior of $A$ relative to $K$ is non-empty;
\item every pair of points in $A$ is contractible under $\varphi$.
\end{itemize}
\noindent Then $\varphi$ is two-point contractible on $K$.
\end{prop}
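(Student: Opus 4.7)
Let $V$ denote the relative interior of $A$ in $K$ (nonempty by hypothesis), and fix $\tilde V \subset X$ open with $\tilde V \cap K = V$. Fix $(x,y) \in K \times K$. The aim is to show that every neighbourhood of $\Delta_X$ is accessible from $(x,y)$ under $\varphi \times \varphi$, equivalently $\Delta_X \cap G_{(x,y)} \neq \emptyset$. The plan is to locate a pair $(a,b) \in G_{(x,y)} \cap (A \times A)$: for such a pair, since $G_{(x,y)}$ is closed and forward-invariant containing $(a,b)$, we have $G_{(a,b)} \subset G_{(x,y)}$, and the hypothesis gives contractibility of $(a,b)$, i.e.\ $\Delta_X \cap G_{(a,b)} \neq \emptyset$. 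Since $V \times V \subset A \times A$ and $\tilde V \times \tilde V$ is open in $X \times X$, by Lemma~1.2.3 applied to $\varphi \times \varphi$ it is enough to verify the stronger statement $G_{(x,y)} \cap (V \times V) \neq \emptyset$.

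Suppose for contradiction that $G_{(x,y)} \cap (V \times V) = \emptyset$. Applying Proposition~1.2.6 to $\varphi \times \varphi$ inside $G_{(x,y)}$, extract a minimal subset $M \subset G_{(x,y)}$ for the two-point motion; then $M \cap (V \times V) = \emptyset$. The argument used in the proof of Lemma~1.2.4 shows that each coordinate projection $\pi_i(M)$ is a closed forward-invariant subset of $K$ under $\varphi$, so the minimality of $K$ forces $\pi_1(M) = \pi_2(M) = K$. By Corollary~1.3.5 applied to the minimal set $M$ for $\varphi \times \varphi$, there is an ergodic probability measure $\nu$ on $X \times X$ for $\varphi \times \varphi$ with $\mathrm{supp}\,\nu = M$; its marginals $\nu_1$ and $\nu_2$ are stationary probability measures for $\varphi$ with $\mathrm{supp}\,\nu_i = K$.

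Granted that $M \cap (A \times A) \neq \emptyset$, the contradiction is immediate: pick $(a,b) \in M \cap (A \times A)$, so by minimality $G_{(a,b)} = M$, and contractibility of $(a,b)$ yields some $(p,p) \in G_{(a,b)} = M$ with $p \in K$. The forward orbit of $(p,p)$ under $\varphi \times \varphi$ remains on $\Delta_K$ since both coordinates evolve identically, so $G_{(p,p)} \subset \Delta_K$; by minimality of $M$ this yields $M \subset \Delta_K$, and combined with $\pi_1(M) = K$ we conclude $M = \Delta_K$. But then $M \cap (V \times V) = \Delta_V \neq \emptyset$ (since $V$ is nonempty), contradicting $M \cap (V \times V) = \emptyset$.

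The hard step is thus the claim $M \cap (A \times A) \neq \emptyset$. It would follow immediately from $\nu_1(A) = \nu_2(A) = 1$, since then $\nu((A \times A)^c) \leq \nu_1(A^c) + \nu_2(A^c) = 0$. This in turn would follow if every stationary probability measure on $K$ assigns full mass to $A$---most naturally, from $\rho$ being the unique stationary measure supported on $K$. I expect this to be established by exploiting the fact that $\bar A = K$ (forced by $\rho(A) = 1$ and $\mathrm{supp}\,\rho = K$) together with the contractibility of pairs in $A$: given a putative second stationary measure $\rho'$ on $K$, one builds a $(\varphi \times \varphi)$-stationary measure on $K \times K$ via Krylov--Bogolyubov averaging of $\rho \otimes \rho'$, and combines the contractibility hypothesis with its marginal structure to force $\rho' = \rho$ or at least $\rho'(A) = 1$. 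Executing this reduction cleanly---and verifying that the ergodic components interact with the diagonal in the required way---is where the main technical work resides.
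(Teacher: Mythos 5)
Your reduction to finding a pair $(a,b) \in G_{(x,y)} \cap (A \times A)$ is correct, and the contradiction framework via Proposition~1.2.6 and Corollary~1.3.5 is well constructed as far as it goes. But the gap you acknowledge at the end is genuine, not merely a detail to be cleaned up: establishing $M \cap (A \times A) \neq \emptyset$ via $\nu_1(A) = \nu_2(A) = 1$ would require that \emph{every} stationary measure supported on $K$ assigns full mass to $A$, whereas the hypotheses only give this for $\rho$ itself. Unique ergodicity on $K$ is not among the assumptions, and it is not available at this stage of the argument (in the main theorem it only falls out \emph{after} combining two-point contractibility with the stability hypothesis (iii)). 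The Krylov--Bogolyubov sketch you gesture at would need contractibility of pairs in $K \times K$---exactly what you are trying to prove---so the route through the marginals of $\nu$ is circular as stated.

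The paper's proof avoids ergodic measures on $M$ altogether. The crucial device is the set $B := \{\,u \in K : \varphi_u^t(A) = 1 \text{ for all } t \in D\,\}$, where $D$ is a countable dense set of times; by stationarity of $\rho$ and $\rho(A)=1$ one has $\rho(B)=1$, so $B$ is nonempty. Using Lemma~1.2.4 (the first-coordinate projection of $G_{(x,y)}$ equals $K$), one picks $(u,v) \in G_{(x,y)}$ with $u \in B$. The nonempty relative interior of $A$ together with minimality of $K$ then gives, via right lower semicontinuity in $t$, a time $t \in D$ with $\varphi_v^t(A) > 0$; and by construction $\varphi_u^t(A)=1$ at that \emph{same} $t$. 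Hence with positive probability both trajectories lie in $A$ at time $t$, so $G_{(u,v)} \cap (A \times A) \neq \emptyset$, which suffices since $G_{(u,v)} \subset G_{(x,y)}$. What your proposal is missing is precisely this trick of exploiting $\rho$-stationarity pointwise in time, which produces a $\rho$-full set of starting points whose trajectory is almost surely in $A$ at \emph{every} time in $D$, so that the ``probability one'' behaviour of $u$ and the ``positive probability'' behaviour of $v$ can be matched at a single common time.
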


\begin{proof}
Fix $x,y \in K$; it is sufficient to show that $G_{(x,y)} \cap (A \times A) \neq \emptyset$. Let $D$ be a countable dense subset of $\mathbb{T}^+$. Let
\[ B \ := \ \{ x \in K \, : \, \textrm{for all }t \in D, \ \varphi_x^t(A)=1\}. \]
\noindent Since $\rho$ is stationary and $\rho(A)=1$, it is easy to show that $\rho(B)=1$---and so, in particular, $B$ is non-empty. By Lemma~1.2.4 the two projections of $G_{(x,y)}$ are both equal to $K$; so let $(u,v)$ be a point in $G_{(x,y)}$ such that $u \in B$. Let $U \subset X$ be an open set such that $U \cap K$ is a non-empty subset of $A$; so $U$ is acessible from $v$. Since the map $t \mapsto \varphi_v^t(U)$ is right lower semicontinuous, there must exist $t \in D$ such that $\varphi_v^t(U)>0$. Since $K$ is forward-invariant, it follows that $\varphi_v^t(A)>0$. So then, there exists a $\mathbb{P}$-positive measure set of sample points $\omega$ such that $\varphi(t,\omega)u$ and $\varphi(t,\omega)v$ are both in $A$. So $G_{(u,v)}$ has non-trivial intersection with $A \times A$, and hence $G_{(x,y)}$ has non-trivial intersection with $A \times A$.
\end{proof}

\subsection{Main result}

\begin{thm}
$\varphi$ is stably synchronising if and only if the following conditions hold:
\begin{enumerate}[\indent (i)]
\item there is a unique minimal set $K \subset X$;
\item $\varphi$ is two-point-contractible on the minimal set $K$;
\item the minimal set $K$ admits stable trajectories.
\end{enumerate}
\end{thm}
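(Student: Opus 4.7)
The plan is to split into the two directions; the forward implication is a straightforward consequence of material already developed, while the reverse implication reduces, through the two-point motion $\varphi \! \times \! \varphi$, to a single application of Theorem~2.2.14.

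For the forward direction, assume $\varphi$ is stably synchronising. Then $\varphi$ is synchronising, so Proposition~2.1.4 yields a unique stationary probability measure $\rho$, and I set $K := \mathrm{supp}\,\rho$. Proposition~1.2.6 applied to $X$ produces at least one minimal set, while Corollary~1.3.5 attaches to each minimal set an ergodic probability measure supported on it; unique ergodicity therefore forces $K$ to be the only minimal set, proving~(i). Condition~(ii) is immediate: any $x,y \in K$ mutually converge almost surely, so in particular every neighbourhood of $\Delta_X$ is accessible from $(x,y)$ under $\varphi \! \times \! \varphi$. For~(iii), every point of $X$ (hence of $K$) is almost surely stable, so $\mathbb{P}(O_K) = 1 > 0$.

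For the reverse direction, the plan is to show that under (i)--(iii) the two-point motion $\varphi \! \times \! \varphi$ satisfies the hypotheses of Theorem~2.2.14 with respect to $\Delta_K \subset X \times X$, and then read off both conclusions of that theorem at once. First, $\Delta_K$ is minimal for $\varphi \! \times \! \varphi$ because its closed forward-invariant subsets correspond bijectively to those of $K$. The crux is to verify that $\Delta_K$ is the \emph{unique} minimal set of $\varphi \! \times \! \varphi$. Let $M$ be any such minimal set. For any $(u,v) \in M$, minimality of $M$ gives $G_{(u,v)} = M$, so Lemma~1.2.4 yields $\pi_1(M) = G_u$; this is a non-empty closed forward-invariant subset of $X$, hence by (i) contains the unique minimal set $K$. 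Choosing $(u,v) \in M$ with $u \in K$ gives $\pi_1(M) = G_u = K$, and symmetrically $\pi_2(M) = K$, so $M \subset K \times K$. Now condition~(ii) says every neighbourhood of $\Delta_X$ is accessible from $(u,v)$ and therefore meets $G_{(u,v)} = M$; by compactness of $M$, $M \cap \Delta_K \neq \emptyset$. Since $\Delta_K \cap M$ is non-empty, closed, and forward-invariant, minimality of $M$ forces $M \subset \Delta_K$, and minimality of $\Delta_K$ then gives $M = \Delta_K$.

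It remains to verify that $\Delta_K$ admits stable trajectories for $\varphi \! \times \! \varphi$. Condition~(iii) together with (i) puts $\varphi$ itself in the setting of Theorem~2.2.14, so every point of $K$ is almost surely stable for $\varphi$; for any such $p$ and any neighbourhood $U$ of $p$ contracting under $\omega$, the product $U \times U$ is a neighbourhood of $(p,p) \in \Delta_K$ contracting under $\varphi \! \times \! \varphi$, so $(p,p)$ is asymptotically stable for $\varphi \! \times \! \varphi$. Theorem~2.2.14 applied to $\varphi \! \times \! \varphi$ then yields that every $(x,y) \in X \times X$ is almost surely stable under $\varphi \! \times \! \varphi$ and satisfies $d(\varphi \! \times \! \varphi(t,\omega)(x,y), \Delta_K) \to 0$ almost surely. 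The latter gives $d(\varphi(t,\omega)x, \varphi(t,\omega)y) \to 0$ almost surely (synchronisation), and the former, specialised to diagonal initial conditions $(x,x)$ together with the product-neighbourhood trick, gives almost-sure stability of every $x \in X$ under $\varphi$. The main obstacle I anticipate is the identification of $\Delta_K$ as the unique minimal set of $\varphi \! \times \! \varphi$; once that is in hand, everything else is bookkeeping.
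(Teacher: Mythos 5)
Your proof is correct, but the reverse direction takes a genuinely different route from the paper. Where the paper, after establishing that $\Delta_K$ is the unique minimal set for $\varphi\!\times\!\varphi$, re-runs the Corollary~1.2.9 / Lemma~2.2.15 machinery from scratch on the two-point motion (choosing events $R_{n,s}$ and $S_{n,s}$ exactly as in the proof of Theorem~2.2.14), you instead package that whole step into a single invocation of Theorem~2.2.14 applied to $\varphi\!\times\!\varphi$ with $K$ replaced by $\Delta_K$. To do this you must first verify that $\Delta_K$ admits stable trajectories for $\varphi\!\times\!\varphi$ (your product-neighbourhood observation handles this) and that $\Delta_K$ is the unique minimal set of $\varphi\!\times\!\varphi$. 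Even for the latter point your argument differs: you reach $M \subset K\times K$ purely from (i) via Lemma~1.2.4 applied to $G_{(u,v)}=M$, whereas the paper uses the second assertion of Theorem~2.2.14 (hence uses (iii) as well) to show that an arbitrary closed forward-invariant $C\subset X\times X$ meets $K\times K$. Both are sound; the paper's version is slightly stronger (it applies to any forward-invariant $C$, not just minimal ones, which would be wasted information anyway given that every such $C$ must contain a minimal set). What your approach buys is modularity and economy: Theorem~2.2.14 is proved once and then re-used verbatim on the product system, making it transparent that the two-point synchronisation argument is literally the same argument as the one-point stability argument lifted to $X\times X$; the cost is the small extra bookkeeping of checking $\Delta_K$ satisfies the hypotheses. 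One cosmetic remark: your final sentence deducing stability of each $x\in X$ by specialising to $(x,x)$ is redundant, since you already noted that (i) and (iii) place $\varphi$ itself under Theorem~2.2.14, which gives that conclusion directly. Also, for the forward direction you route condition (i) through unique ergodicity (Proposition~2.1.4 plus Corollary~1.3.5); the paper simply observes that synchronisation gives two-point contractibility on all of $X$, which already forces a unique minimal set -- both are fine, but the paper's is quicker and does not invoke the stationary-measure machinery.
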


\begin{rmk}
If we are given a stationary probability measure $\rho$, then we can replace condition (i) with the condition that $\mathrm{supp}\,\rho$ is the unique minimal set; for conditions (ii) and (iii), we then set $K:=\mathrm{supp}\,\rho$.
\end{rmk}

\begin{rmk}
It is not hard to show, using the Poincar\'{e} recurrence theorem, that if $\varphi$ satisfies (i) and (ii) (e.g.~if $\varphi$ is two-point contractible on the whole of $X$) and $K$ has non-empty interior, then the support of every stationary probability measure is equal to $K$.
\end{rmk}

\begin{proof}[Proof of Theorem 3.2.1]
If $\varphi$ is synchronising then it is obviously two-point contractible on $X$, so (i) and (ii) hold. If $\varphi$ is stably synchronising then (iii) also holds. Now suppose that (i), (ii) and (iii) hold. By Theorem~2.2.14, (i) and (iii) together imply that every point in $X$ is almost surely stable, and so we just need to establish that $\varphi$ is synchronising. Using the second assertion in Theorem~2.2.14, it is clear that for any closed non-empty forward-invariant $C \subset X \times X$, $C$ has non-trivial intersection with $K \times K$; consequently, by (iii), $C$ has non-trivial intersection with $\Delta_{K}$---and therefore, since $\Delta_K$ is clearly minimal under $\varphi \! \times \! \varphi$, $C$ contains the whole of $\Delta_K$. Thus we see that $\Delta_K$ is the unique minimal set under $\varphi \! \times \! \varphi$. Now fix any $x,y \in X$. Fix a point $p \in K$, and for each $n \in \mathbb{N}$ and $s \in \mathbb{Q} \cap \mathbb{T}^+$, let
\begin{align*}
R_{n,s} \ &= \ \left\{ \omega \in \Omega \, : \, (\varphi(s,\omega)x,\varphi(s,\omega)y) \, \in B_{\frac{1}{n}}(p) \times B_{\frac{1}{n}}(p) \right\} \\
S_{n,s} \ &= \ \theta^{-s}\left(E_{B_{\!\frac{1}{n}\!}(p)}\right).
\end{align*}
\noindent Corollary~1.2.9 (applied to $\varphi \! \times \! \varphi$) gives that $\mathbb{P}\left(\bigcup_s R_{n,s}\right)=1$ for all $n$. Since $p$ is almost surely stable, we have once again that $\mathbb{P}(S_{n,s}) \to 1$ as $n \to \infty$ (uniformly in $s$). Obviously, given any $n$ and $s$, for any $\omega \in R_{n,s} \cap S_{n,s}$ we have that $d(\varphi(t,\omega)x,\varphi(t,\omega)y) \to 0$ as $t \to \infty$. So Lemma~2.2.15 yields that for $\mathbb{P}$-almost all $\omega$, $d(\varphi(t,\omega)x,\varphi(t,\omega)y) \to 0$ as $t \to \infty$. So $\varphi$ is synchronising.
\end{proof}

\section{Example}

The following is based on the example described in [LeJan87].\footnote{A related study (a preliminary version of which is cited in [LeJan87]) can also be found in [Kai93]; it is from this paper that we have taken our term ``subperiod''.} Let $(\Omega,\mathcal{F},\mathbb{P})=([0,1)^\mathbb{N},\mathcal{B}([0,1))^{\otimes \mathbb{N}},\nu^{\otimes \mathbb{N}})$, where $\nu$ is the Lebesgue measure on $[0,1)$. For each $n \in \mathbb{N} \cup \{0\}$, define the projection $c_n:\Omega \to [0,1)^n$ by $c_n((\alpha_r)_{r \in \mathbb{N}})=(\alpha_r)_{r \in \{1,\ldots,n\}}$, and let $\mathcal{F}_n:=\sigma(c_n)$. Writing $\theta^n:(\alpha_r)_{r \in \mathbb{N}} \mapsto (\alpha_{n+r})_{r \in \mathbb{N}}$ for the canonical shift dynamical system, it is clear that $(\Omega,\mathcal{F},(\mathcal{F}_n),\mathbb{P},(\theta^n))$ is a memoryless noise space.
\\ \\
We identify the circle $\mathbb{S}^1$ with $^{\mathbb{R}\!}/_{\!\mathbb{Z}\,}$ in the obvious manner (where $^{\mathbb{R}\!}/_{\!\mathbb{Z}\,}$ is equipped with the obvious topology and Riemannian structure), and we write $[x]:=\{x+n:n \in \mathbb{Z}\} \in \mathbb{S}^1$ for all $x \in \mathbb{R}$. Let $f:\mathbb{S}^1 \to \mathbb{S}^1$ be a smooth map, and $F:\mathbb{R} \to \mathbb{R}$ a lift of $f$, such that $F(x)-x$ is periodic in $x$ (i.e.~$\mathrm{deg}\,f=1$). For any $\alpha \in [0,1)$, we define the map $f_\alpha:\mathbb{S}^1 \to \mathbb{S}^1$ by
\[ f_\alpha([x]) \ = \ [F(x+\alpha) - \alpha]. \]
\noindent We will say that a value $\alpha \in (0,1)$ is a \emph{subperiod of $f$} if $f_\alpha = f$. (This is equivalent to saying that $F(x)-x$ is $\alpha$-periodic in $x$.) It is clear that for any $\alpha_1,\alpha_2 \in (0,1)$, $\alpha_1$ is a subperiod of $f$ if and only if it is a subperiod of $f_{\alpha_2}$.
\\ \\
We define the RDS $\varphi^f$ on $\mathbb{S}^1$ by
\[ \varphi^f(n,(\alpha_r)) \ = \ f_{\alpha_n} \circ \ldots \circ f_{\alpha_1}. \]

\begin{p1}
$\mathbb{S}^1$ is minimal under $\varphi^f$ if and only if $f$ is not a rational rotation.
\end{p1}

\begin{proof}
If $f$ is a rotation then $\varphi^f(n,\omega)=f^n$ for all $n$ and $\omega$; hence it is clear that if $f$ is a rational rotation then $\varphi^f$ is not minimal on $\mathbb{S}^1$. Now suppose that $f$ is not a rational rotation; so there must exist $p \in \mathbb{R}$ such that $w \! := \! F(p)-p$ is irrational. For all $y \in \mathbb{S}^1$ let $\alpha(y) \in [0,1)$ be such that $[p-\alpha(y)]=y$. Fix $x \in \mathbb{S}^1$, with $x' \in \mathbb{R}$ being a lift of $x$, and any non-empty open $U \subset \mathbb{S}^1$. Let $n \in \mathbb{N}$ be such that $[x' + nw] \in U$. Let $\alpha_1:=\alpha(x)$, and recursively define $\alpha_m:=\alpha(f_{\alpha_{m-1}} \circ \ldots \circ f_{\alpha_1}(x))\,$ for $2 \leq m \leq n$. Then by construction, $f_{\alpha_n} \circ \ldots \circ f_{\alpha_1}(x) \in U$. It is then clear that for a sufficiently small neighbourhood $V$ of $(\alpha_1,\ldots,\alpha_n)$ in $[0,1)^n$, $\varphi^f(n,\omega)x \in U$ for all $\omega \in c_n^{-1}(V)$. Since $\nu$ has full support on $[0,1)$, it follows that $\mathbb{P}(c_n^{-1}(V))>0$. Thus $U$ is accessible from $x$. This is true for all $x$ and $U$, so $\varphi^f$ is minimal.
\end{proof}

\begin{p2}
$\varphi^f$ is two-point contractible on $\mathbb{S}^1$ if and only if $f$ has no subperiods.
\end{p2}

\begin{proof}
We work with the metric $d(x,y)=\min\{|y'-x'|:[x']=x,[y']=y\}$ (under which the diameter of $\mathbb{S}^1$ is $\frac{1}{2}$). First suppose $f$ has a subperiod; without loss of generality, let $\alpha \in (0,\frac{1}{2}]$ be a subperiod. Since $\alpha$ is a subperiod of $f_{\alpha'}$ for all $\alpha' \in [0,1)$, it follows that for any $x,y \in \mathbb{S}^1$ with $d(x,y)=\alpha$, $d(\varphi^f(n,\omega)x,\varphi^f(n,\omega)y)=\alpha$ for all $n$ and $\omega$. So $\varphi^f$ is not two-point contractible on $\mathbb{S}^1$.
\\ \\
Now suppose $f$ has no subperiods. Fix any $\alpha \in (0,\frac{1}{2}]$, and define the function $g:\mathbb{R} \to \mathbb{R}$ by $g(z)=F(z+\alpha)-F(z)-\alpha$. Since $\alpha$ is not a subperiod, there exists $q \in \mathbb{R}$ such that $g(q) \neq 0$. We now consider separately the possibilities that $g(q)>0$ and $g(q)<0$. First suppose $g(q)>0$. Given the continuity of $F$, let $\varepsilon \in (0,1)$ be such that for all $x \in [q,q+\varepsilon)$, $F(x)-F(q) - (x-q)<g(q)$. Let $r \geq 2$ be an integer such that $\langle r\alpha \rangle \in [0,\varepsilon)$ (where $\langle a \rangle:=a-\lfloor a \rfloor$ denotes the fractional part of $a$). We have that
\begin{align*} \sum_{i=0}^{r-1} g(q+i\alpha) \ &= \ F(q+r\alpha) - F(q) - r\alpha \\ &= \ F(q+ \langle r\alpha \rangle) - F(q) - \langle r\alpha \rangle \hspace{4mm} \textrm{(since $x \mapsto F(x)-x$ is 1-periodic)} \\ &< \ g(q). \end{align*}
\noindent Hence there must exist $i \in \{1,\ldots,r-1\}$ such that $g(q+i\alpha)<0$. Now in the case that $g(q)<0$, by the same argument we can find $i \in \mathbb{N}$ such that $g(q+i\alpha)>0$. In either case, $g$ attains both positive and negative values, and so by the intermediate value theorem, the range of $g$ includes a neighbourhood of 0. So let $p \in \mathbb{R}$ be such that $g(p) \in [-\alpha,0)$. Now fix any points $x,y \in \mathbb{S}^1$ with $d(x,y)=\alpha$; without loss of generality, assume there exist lifts $x',y' \in \mathbb{R}$ of $x$ and $y$ respectively such that $y'-x'=\alpha$. Let $\beta := \langle p-x' \rangle$. Then
\begin{align*}
d(f_\beta(x),f_\beta(y)) \ &= \ d( \, [\!F(x'+\beta)] \, , \, [\!F(y'+\beta)] \, ) \\
&= \ d( \, [\!F(p)] \, , \, [\!F(p+\alpha)] \, ) \\
&< \ \alpha.
\end{align*}
\noindent Hence (by continuity) there must exist a non-empty open $U \subset [0,1)$ such that
\[ d(f_{\tilde{\beta}}(x),f_{\tilde{\beta}}(y)) \ < \ \alpha \]
\noindent for all $\tilde{\beta} \in U$. Note that $\mathbb{P}(c_1^{-1}(U)) = \nu(U)>0$. Hence $\varphi^f$ is two-point contractible on $\mathbb{S}^1$.
\end{proof}

\noindent Now define the quantity $\lambda_f \in \mathbb{R} \cup \{-\infty\}$ by
\[ \lambda_f \ := \ \int_0^1 \log |F'(y)| \, dy. \]

\begin{l3}
The Lebesgue measure $l$ on the circle is a stationary probability measure of $\varphi^f$. For any $n \in \mathbb{N} \cup \{0\}$,
\[ \int_{\Omega \times \mathbb{S}^1} \! \log|| \mathrm{d}\varphi^f(n,\omega)_x || \, \mathbb{P} \otimes l(d(\omega,x)) \ = \ n\lambda_f. \]
\end{l3}

\begin{proof}
A straightforward exercise.
\end{proof}

\begin{t4}
If $f$ has no subperiods and $\lambda_f<0$ then $\varphi^f$ is stably synchronising.
\end{t4}

\noindent Note that a partial converse also holds: if $f$ has a subperiod then clearly $\varphi^f$ cannot be synchronising.

\begin{proof}
Obviously if $f$ has no subperiods then, in particular, $f$ is not a rotation. So Propositions~4.1 and 4.2 give that $\varphi^f$ satisfies conditions (i) and (ii) of Theorem~3.2.1, with $K$ being the whole of $\mathbb{S}^1$. Now it is clear that for any $n \in \mathbb{N} \cup \{0\}$, every order derivative of $\varphi^f(n,\omega)$ is uniformly bounded in $\omega$; this means that $\varphi^f$ easily satisfies the conditions required to be able to apply the ``negative Lyapunov exponent'' rule described in Remark~2.2.12. So by Lemma~4.3, if $\lambda_f<0$ then $\mathbb{S}^1$ admits stable trajectories. Theorem~3.2.1 then gives the result.
\end{proof}

\begin{r5}
Due to Lemma~1.3.3 (as it is used in Remark~2.2.12), we did not need to prove the ergodicity of $l$ in order to prove Theorem~4.4. Nonetheless, one can show that provided $f$ is not a rational rotation, $l$ is the only stationary probability measure (and in particular is therefore ergodic): firstly, in the case that $f$ is an irrational rotation, $\varphi^f$ simply consists of repeated iterations of $f$, and it is well known that $l$ is the only invariant probability measure of an irrational rotation (see e.g.~Theorem~11.2.9 of [KH95]); secondly, in the case that $f$ is not a rotation, one can show that $l$ must be the only stationary probability measure, using the fact that for any $x \in \mathbb{S}^1$ there will exist a non-empty open set $V \subset (0,1)$ such that the map $\alpha \mapsto f_\alpha(x)$ is a local diffeomorphism on $V$.
\end{r5}

\noindent Now [LeJan87] specifically considers the case that $f$ is an orientation-preserving diffeomorphism---in which case (as is mentioned in [LeJan87]), if $f$ is not a rotation then $\lambda_f<0$ (by the strict Jensen inequality). It is stated that if $\frac{1}{n}$ is the least period of $x \mapsto F(x)-x$, then the number of open regions of mutual convergence (as described in Section~0.2) is equal to $n$. The justification (as is very briefly outlined in [LeJan87]) is essentially as follows: due to $f$ being homeomorphic, each region of mutual convergence must be a \emph{connected} set of length $\frac{1}{m}$ (where $m$ is the number of such regions); and moreover (due to the shift map $\theta$ being measure-preserving) the images of each of these sets under the RDS $\varphi^f$ must \emph{remain} of length $\frac{1}{m}$; since at each step the random parameter $\alpha$ can be selected anywhere from $[0,1)$, it follows that $m$ is a subperiod of $f$ and therefore $n$ is a multiple of $m$; but it is also clear that $m$ is a multiple of $n$.
\\ \\
So in particular, the ``$n=1$ case'' of Le~Jan's statement is that if $f$ is a diffeomorphism with no subperiods then $\varphi^f$ is ``$l$-almost stably synchronising'' (as defined in Section~0.2). Theorem~4.4 strengthens this to stable synchronisation, and extends this \emph{beyond} the case of invertible $f$.
\\ \\
As an example: Consider the map $f$ whose lift is given by
\[ F(x) \ = \ x + a\sin(2 \pi x), \hspace{6mm} a > 0. \]
\noindent If $a \in (0,\frac{1}{2\pi})$ then $f$ is an orientation-preserving diffeomorphism with no subperiods, and so $\varphi^f$ is stably synchronising. If we wish to increase $a$ beyond $\frac{1}{2\pi}$, such that $f$ will no longer be a diffeomorphism or even an injective map, there will still exist a range of $a$-values close enough to $\frac{1}{2\pi}$ from above such that $\lambda_f<0$ and so $\varphi^f$ remains stably synchronising.

\begin{r6}
Heuristically, in the example studied above, the ``noise process'' is a sequence of random rotations of the phase space on which $f$ is being iterated. We have seen that this noise process can induce global stable synchronisation when the original map $f$ has much less straightforward dynamics. Indeed (as in [LeJan87]), for any diffeomorphism $f$ with \emph{any} number of attracting, repelling and non-hyperbolic fixed points or periodic points, provided $f$ has no subperiods we will have that $\varphi^f$ is globally stably synchronising. (And even if $f$ does have a subperiod, there will obviously exist an arbitrarily small perturbation of $f$ which has no subperiods.)
\end{r6}

\section*{Appendix: Asymptotic vs. Lyapunov stability}

Asymptotic stability is usually defined as the combination of Lyapunov stability and local attractivity. However, the definition that we have been working with (Definition~2.2.1) is drastically easier to work with---partly because it is inherently a much simpler definition, and partly because the na\"{i}ve notion of ``local attractivity'' poses difficulties with regards to measurability. Nonetheless, if we are to work with a different definition from the usual definition, then it is important that our definition is at least ``similar'' to the usual definition. And above all, it is important that our definition of ``asymptotic stability'' implies Lyapunov stability (otherwise it would be absurd to refer to it as ``stability''). In this section, we shall deal with these issues.
\\ \\
(The main results in this Appendix are Theorem~A11 and Corollary~A13. Theorem~A11 is not specific to \emph{random} dynamical systems, but is really a general result about non-autonomous dynamical systems.)
\\ \\
Recall that the RDS $\varphi$ is assumed to be c\`{a}dl\`{a}g (properties (d) and (e) in Definition~1.1.8).

\begin{alemma}
For any $t \in \mathbb{T}^+$ and $\omega \in \Omega$, the family of functions $\{\varphi(s,\omega)\}_{0 \leq s \leq t}$ on $X$ is equicontinuous.
\end{alemma}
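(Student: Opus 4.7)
The plan is to argue by contradiction, exploiting the compactness of $[0,t]\times X\times X$ together with the c\`adl\`ag properties (d) and (e) of $\varphi$ to collapse any supposed failure of equicontinuity onto a single limit point at which two of the three continuity hypotheses must apply.

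Suppose the family $\{\varphi(s,\omega)\}_{0\le s\le t}$ is not equicontinuous. Then there exist $\varepsilon>0$ and sequences $s_n\in[0,t]$ and $x_n,y_n\in X$ with $d(x_n,y_n)\to 0$ but
\[ d(\varphi(s_n,\omega)x_n,\varphi(s_n,\omega)y_n) \;\ge\; \varepsilon \qquad \text{for all } n. \]
Because $[0,t]$ and $X$ are compact, after passing to a subsequence we may assume $s_n\to s^*\in[0,t]$ and $x_n\to x^*\in X$; then automatically $y_n\to x^*$ too. I would next pass to a further subsequence so that the sequence $(s_n)$ falls into exactly one of three regimes: (I)~$s_n=s^*$ for all $n$; (II)~$s_n>s^*$ for all $n$, with $s_n\downarrow s^*$ monotonically; or (III)~$s_n<s^*$ for all $n$, with $s_n\uparrow s^*$ strictly monotonically. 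Every real sequence admits a monotone subsequence, so this trichotomy costs nothing.

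In regime~(I) the bound $d(\varphi(s^*,\omega)x_n,\varphi(s^*,\omega)y_n)\ge\varepsilon$ contradicts the continuity of the single map $\varphi(s^*,\omega)$ at $x^*$. In regime~(II), property~(d) of Definition~1.1.8 applied to the decreasing sequence $(s_n)$ and to the sequences $(x_n),(y_n)$ gives
\[ \varphi(s_n,\omega)x_n \;\longrightarrow\; \varphi(s^*,\omega)x^*, \qquad \varphi(s_n,\omega)y_n \;\longrightarrow\; \varphi(s^*,\omega)x^*, \]
so the distance between them tends to $0$, again contradicting the lower bound $\varepsilon$. In regime~(III), property~(e) yields a common left-limit value $\varphi_-(s^*,\omega,x^*)$ for both sequences $\varphi(s_n,\omega)x_n$ and $\varphi(s_n,\omega)y_n$, delivering the same contradiction.

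Thus all three regimes are impossible, and the original assumption of non-equicontinuity fails. There is no real obstacle here: the argument is essentially a compactness-plus-subsequence extraction, and the whole point is that properties~(d) and~(e) of Definition~1.1.8 were stated in the joint $(t,x)$-variable form precisely so that they close off the two non-trivial regimes. The only subtlety worth being careful about is that in regime~(II) we must pass to a monotonically decreasing subsequence (and in~(III) to a strictly increasing one) so as to apply~(d) and~(e) as literally stated, which is always possible for a real sequence.
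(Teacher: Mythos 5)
Your proof is correct and follows essentially the same approach as the paper: assume non-equicontinuity, extract a monotone subsequence of the times $s_n$, and invoke the c\`adl\`ag properties (d) (for a non-increasing subsequence) and (e) (for a strictly increasing one) of Definition~1.1.8 to obtain a contradiction. The only cosmetic difference is that you negate \emph{uniform} equicontinuity (two sequences $x_n,y_n$ with $d(x_n,y_n)\to 0$), whereas the paper negates pointwise equicontinuity (a fixed $x$ with $x_n\to x$); on a compact space the two are equivalent, and your regime~(I) is in any case subsumed by regime~(II), since a constant time sequence is a (non-strictly) decreasing one to which property~(d) applies directly.
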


\begin{proof}
Suppose the family of maps $\{\varphi(s,\omega)\}_{0 \leq s \leq t}$ is not equicontinuous. Then there exist a number $\varepsilon>0$, a sequence $(x_n)_{n \in \mathbb{N}}$ in $X$ converging to a point $x$, and a sequence $(s_n)_{n \in \mathbb{N}}$ in $\mathbb{T}^+ \cap [0,t]$, such that $d(\varphi(s_n,\omega)x_n,\varphi(s_n,\omega)x) > \varepsilon$ for all $n$. By the monotone subsequence theorem, let $(n_i)_{i \in \mathbb{N}}$ be an unbounded increasing sequence in $\mathbb{N}$ such that \emph{either} $(s_{n_i})_{i \in \mathbb{N}}$ is decreasing (with $s:=\inf_i s_{n_i}$) \emph{or} $(s_{n_i})_{i \in \mathbb{N}}$ is strictly increasing (with $s:=\sup_i s_{n_i}$). If $(s_{n_i})$ is decreasing then (by property~(d) in Definition~1.1.8) $\varphi(s_{n_i},\omega)x_{n_i} \to \varphi(s,\omega)x$ and $\varphi(s_{n_i},\omega)x \to \varphi(s,\omega)x$ as $i \to \infty$, so $d(\varphi(s_{n_i},\omega)x_{n_i},\varphi(s_{n_i},\omega)x) \to 0$ as $i \to \infty$, contradicting the fact that $d(\varphi(s_n,\omega)x_n,\varphi(s_n,\omega)x) > \varepsilon$ for all $n$. If $(s_{n_i})$ is strictly increasing then (by property~(e) in Definition~1.1.8) $\varphi(s_{n_i},\omega)x_{n_i} \to \varphi_-(s,\omega,x)$ and $\varphi(s_{n_i},\omega)x \to \varphi_-(s,\omega,x)$ as $i \to \infty$, so once again, $d(\varphi(s_{n_i},\omega)x_{n_i},\varphi(s_{n_i},\omega)x) \to 0$ as $i \to \infty$, contradicting the fact that $d(\varphi(s_n,\omega)x_n,\varphi(s_n,\omega)x) > \varepsilon$ for all $n$. In either case, we have a contradiction.
\end{proof}

\begin{adefi}
For any $\varepsilon>0$ and any $0 < \delta \leq \varepsilon$, we will say that a pair $(\omega,x) \in \Omega \times X$ is \emph{$(\varepsilon,\delta)$-contained} if for any $y \in X$ with $d(x,y) \leq \delta$ and any $t \in \mathbb{T}^+$, $d(\varphi(t,\omega)x,\varphi(t,\omega)y) \leq \varepsilon$.
\end{adefi}

\noindent Now for any $\varepsilon>0$ and any $0 < \delta \leq \varepsilon$, let $L_{\varepsilon,\delta} \subset \Omega \times X$ denote the set of all $(\varepsilon,\delta)$-contained pairs $(\omega,x)$.

\begin{alemma}
For all $\varepsilon>0$ and $0 < \delta \leq \varepsilon$, $L_{\varepsilon,\delta} \in \mathcal{F}_\infty \otimes \mathcal{B}(X)$.
\end{alemma}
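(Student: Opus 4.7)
The plan is to express $L_{\varepsilon,\delta}$ as a countable intersection of $(\mathcal{F}_\infty \otimes \mathcal{B}(X))$-measurable sets. The two obstacles are that the defining condition quantifies over the uncountable sets $\{y \in X : d(x,y) \leq \delta\}$ and $\mathbb{T}^+$, so both must be reduced to countable index sets.

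First I would handle the spatial supremum. For each $t \in \mathbb{T}^+$ define
\[ M_t(\omega,x) \ := \ \sup\{\, d(\varphi(t,\omega)x,\varphi(t,\omega)y) \, : \, d(x,y) \leq \delta \, \}, \]
so that $(\omega,x) \in L_{\varepsilon,\delta}$ iff $M_t(\omega,x) \leq \varepsilon$ for every $t \in \mathbb{T}^+$. Let $S \subset X$ be a countable dense set. Using the continuity of $\varphi(t,\omega)$, the compactness of $\{y : d(x,y) \leq \delta\}$, and the identity $\{y : d(x,y) \leq \delta\} = \bigcap_n \{y : d(x,y) < \delta + \tfrac{1}{n}\}$, a standard approximation argument (extracting a convergent subsequence of near-maximisers from each open ball $B_{\delta+1/n}(x)$) shows that
\[ M_t(\omega,x) \ = \ \lim_{n \to \infty} \ \sup\{d(\varphi(t,\omega)x,\varphi(t,\omega)y) \, : \, y \in S, \ d(x,y) < \delta + \tfrac{1}{n}\}. \]
For each fixed $y \in S$ and $n$, the expression $\mathbbm{1}_{\{d(x,y)\,<\,\delta + 1/n\}}\,d(\varphi(t,\omega)x,\varphi(t,\omega)y)$ is $(\mathcal{F}_t \otimes \mathcal{B}(X))$-measurable in $(\omega,x)$ (since so is the map $(\omega,x) \mapsto \varphi(t,\omega)x$), hence so is the countable supremum over $y \in S$, and the monotone limit in $n$ yields measurability of $M_t$.

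Next I would reduce the time index. Pick a countable dense set $D \subset \mathbb{T}^+$ containing $0$ (e.g.~$D = \mathbb{Q} \cap [0,\infty)$ in continuous time, or $D = \mathbb{T}^+$ in discrete time) with the property that every $t \in \mathbb{T}^+$ is the limit of a decreasing sequence in $D$. I claim $L_{\varepsilon,\delta} = \bigcap_{t \in D} \{M_t \leq \varepsilon\}$. One inclusion is trivial. For the other, suppose $M_{t_0}(\omega,x) > \varepsilon$, witnessed by some $y$ with $d(x,y) \leq \delta$, and let $t_n \downarrow t_0$ with $t_n \in D$. By the right-continuity property~(d) of Definition~1.1.8 applied to both $x$ and $y$, $\,d(\varphi(t_n,\omega)x,\varphi(t_n,\omega)y) \to d(\varphi(t_0,\omega)x,\varphi(t_0,\omega)y) > \varepsilon$, so $M_{t_n}(\omega,x) > \varepsilon$ for some $n$, as required. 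This shows $L_{\varepsilon,\delta}$ is a countable intersection of $(\mathcal{F}_\infty \otimes \mathcal{B}(X))$-measurable sets.

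The main obstacle is the step identifying $M_t$ with a supremum over the countable set $S$: in a general metric space the closed ball $\{y : d(x,y) \leq \delta\}$ need not equal the closure of the open ball $B_\delta(x)$, so $S$ need not have dense intersection with it, and one cannot simply restrict the sup to $S \cap \{y : d(x,y) \leq \delta\}$. The device of approximating the closed ball from outside via the open balls $B_{\delta + 1/n}(x)$, together with compactness to extract a limiting maximiser in $\{y : d(x,y) \leq \delta\}$ itself, is what makes the reduction simultaneously correct pointwise and jointly measurable in $(\omega,x)$.
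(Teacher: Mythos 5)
Your proof is correct, and it is in fact \emph{more careful} than the paper's own one-line proof. The paper simply asserts the identity
\[ L_{\varepsilon,\delta} \ = \ \bigcap_{t \in D} \, \bigcap_{y \in S} \; \{ (\omega,x) : \textrm{either } d(x,y) \leq \delta \textrm{ and } d(\varphi(t,\omega)x,\varphi(t,\omega)y) \leq \varepsilon, \textrm{ or } d(x,y)>\delta \} \]
for countable dense sets $S \subset X$, $D \subset \mathbb{T}^+$, and declares the right-hand side ``clearly'' measurable. The inclusion $\subseteq$ is immediate, but $\supseteq$ is exactly where the boundary issue you flag can bite: a point $y$ with $d(x,y)=\delta$ that is isolated within the closed ball $\{z:d(x,z)\leq\delta\}$ need not be approximable by points of $S$ lying in that closed ball, so the countable intersection may fail to detect it. Concretely, take $X=\{0\}\cup[1,2]\subset\mathbb{R}$, $S=\{0\}\cup(\mathbb{Q}\cap(1,2])$, $\delta=\varepsilon=1$, discrete time over a trivial noise space, with the single deterministic map $f$ satisfying $f(0)=0$ and $f\equiv 2$ on $[1,2]$: then $(\omega,0)$ lies in the right-hand side (every $y\in S\setminus\{0\}$ has $d(0,y)>1$) yet is not $(1,1)$-contained (witnessed by $y=1$). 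So the displayed relation is in general only a proper inclusion, and the paper's proof as written has a gap---although the \emph{conclusion} of Lemma~A3 is still true, as your argument shows.

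Your device---approximating the closed ball from \emph{outside} via the open balls $B_{\delta+1/n}(x)$, taking countable sups over $S$ inside each (always over a nonempty set, since $S$ is dense), and using the compactness of $X$ to extract a limiting near-maximiser that lands back in the closed ball---is precisely what is needed to make the countable reduction both pointwise correct and jointly measurable in $(\omega,x)$. The reduction of the time index to $D$ via right-continuity (property (d) of Definition~1.1.8) is likewise correct. This is how the lemma should be proved, and it is the natural repair of the paper's argument.
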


\begin{proof}
Let $S$ be a countable dense subset of $X$, and let $D$ be a countable dense subset of $\mathbb{T}^+$. It is easy to show that
\[ L_{\varepsilon,\delta} \ = \ \bigcap_{t \in D} \, \bigcap_{y \in S} \; \{ (\omega,x) : \textrm{either } d(x,y) \leq \delta \textrm{ and } d(\varphi(t,\omega)x,\varphi(t,\omega)y) \leq \varepsilon, \textrm{ or } d(x,y)>\delta \}, \]
\noindent which is clearly $(\mathcal{F}_\infty \otimes \mathcal{B}(X))$-measurable.
\end{proof}

\begin{adefi}
We will say that a pair $(\omega,x) \in \Omega \times X$ is \emph{Lyapunov stable} if the family of maps $\{\varphi(t,\omega)\}_{t \in \mathbb{T}^+}$ is equicontinuous at $x$---i.e.~if for all $\varepsilon>0$ there exists $0<\delta \leq \varepsilon$ such that $(\omega,x)$ is $(\varepsilon,\delta)$-contained.
\end{adefi}

\noindent Now let $L \subset \Omega \times X$ denote the set of all Lyapunov stable pairs $(\omega,x)$.

\begin{aprop}
$L$ is an $(\mathcal{F}_\infty \otimes \mathcal{B}(X))$-measurable $(\Theta^t)$-backward-invariant set.
\end{aprop}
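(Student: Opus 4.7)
The plan is to handle the two assertions (measurability and $(\Theta^t)$-backward-invariance) separately. Both are quite straightforward given the preceding Lemmata A1 and A3; neither step is an intrinsic obstacle, although the backward-invariance argument requires a careful combination of three different continuity properties.

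For measurability, I would express $L$ as a countable Boolean combination of the sets $L_{\varepsilon,\delta}$, by restricting the quantifiers in Definition~A4 to reciprocals of positive integers. Specifically, $(\omega,x) \in L$ if and only if for every $n \in \mathbb{N}$ there exists $m \geq n$ with $(\omega,x) \in L_{1/n,\,1/m}$; one direction is immediate, and the other follows because $L_{\varepsilon,\delta}$ is monotone in the sense that if $(\omega,x) \in L_{\varepsilon,\delta}$, $\varepsilon' \geq \varepsilon$ and $\delta' \leq \delta$ with $\delta' \leq \varepsilon'$, then $(\omega,x) \in L_{\varepsilon',\delta'}$. Hence
\[ L \ = \ \bigcap_{n=1}^\infty \, \bigcup_{m=n}^\infty L_{1/n,\,1/m}, \]
and Lemma~A3 gives $L \in \mathcal{F}_\infty \otimes \mathcal{B}(X)$.

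For backward-invariance, I would fix $t \in \mathbb{T}^+$ and $(\omega,x) \in \Omega \times X$ with $\Theta^t(\omega,x) = (\theta^t\omega,\varphi(t,\omega)x) \in L$, and show $(\omega,x) \in L$. Given $\varepsilon > 0$, the plan is to produce the required $\delta > 0$ in three steps. First, Lyapunov stability of $(\theta^t\omega,\varphi(t,\omega)x)$ furnishes some $\delta' \in (0,\varepsilon]$ such that whenever $d(y',\varphi(t,\omega)x) \leq \delta'$, one has $d(\varphi(u,\theta^t\omega)y',\varphi(u,\theta^t\omega)\varphi(t,\omega)x) \leq \varepsilon$ for every $u \in \mathbb{T}^+$. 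Next, since $\varphi(t,\omega)$ is continuous at $x$, there exists $\delta_1 > 0$ with $d(x,y) \leq \delta_1 \Rightarrow d(\varphi(t,\omega)y,\varphi(t,\omega)x) \leq \delta'$; combined with the cocycle property, this yields $d(\varphi(t+u,\omega)x,\varphi(t+u,\omega)y) \leq \varepsilon$ for all $u \geq 0$ whenever $d(x,y) \leq \delta_1$. Finally, Lemma~A1 (equicontinuity of $\{\varphi(s,\omega)\}_{0 \leq s \leq t}$ at $x$) provides $\delta_2 > 0$ such that $d(x,y) \leq \delta_2$ implies $d(\varphi(s,\omega)x,\varphi(s,\omega)y) \leq \varepsilon$ for all $s \in [0,t] \cap \mathbb{T}^+$. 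Taking $\delta = \min(\delta_1,\delta_2,\varepsilon)$ then exhibits $(\omega,x)$ as $(\varepsilon,\delta)$-contained.

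The only mildly delicate point is ensuring the chosen $\delta$ lies in $(0,\varepsilon]$ (which is why I include $\varepsilon$ in the minimum) and that the intermediate threshold $\delta'$ in the first step is reused correctly to bridge the two time-regimes $[0,t]$ and $[t,\infty)$ via the cocycle identity. Once this is in place, the two paragraphs close out the proposition.
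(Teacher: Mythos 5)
Your proof is correct and follows essentially the same route as the paper: the measurability decomposition $L = \bigcap_n \bigcup_{m \geq n} L_{1/n,1/m}$ is verbatim, and the backward-invariance argument likewise combines the equicontinuity of Lemma~A1 on $[0,t]$ with the Lyapunov stability of $\Theta^t(\omega,x)$ on $[t,\infty)$, bridged at time $t$. The only cosmetic difference is that the paper applies Lemma~A1 once with threshold $\tilde\delta \leq \varepsilon$ (so the resulting $\delta$ serves simultaneously to bound the deviation on $[0,t]$ and to land inside the $\tilde\delta$-ball at time $t$), whereas you split these into two separate choices $\delta_1,\delta_2$ and take the minimum; both are fine, the paper's version is marginally more economical.
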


\begin{proof}
It is clear that
\[ L \ = \ \bigcap_{n=1}^\infty \, \bigcup_{m=n}^\infty L_{\frac{1}{n},\frac{1}{m}}, \]
\noindent and so $L \in \mathcal{F}_\infty \otimes \mathcal{B}(X)$. Now suppose we have $t \in \mathbb{T}^+$, $\omega \in \Omega$ and $x \in X$ such that $\Theta^t(\omega,x)$ is Lyapunov stable; we need to show that $(\omega,x)$ is Lyapunov stable. Fix any $\varepsilon>0$. Let $0<\tilde{\delta} \leq \varepsilon$ be such that $\Theta^t(\omega,x)$ is $(\varepsilon,\tilde{\delta})$-contained, and (by Lemma~A1) let $\delta>0$ be such that $\varphi(s,\omega)\overline{B_\delta(x)} \subset \overline{B_{\tilde{\delta}}(\varphi(s,\omega)x)}$ for all $s \in \mathbb{T}^+ \cap [0,t]$. Then $(\omega,x)$ is $(\varepsilon,\delta)$-contained.
\end{proof}

\begin{adefi}
For any $\varepsilon>0$ and any $0 < \delta \leq \varepsilon$, we will say that a pair $(\omega,x) \in \Omega \times X$ is \emph{$(\varepsilon,\delta)$-recurrent} if the set
\[ R_{\varepsilon,\delta,\omega,x} \ := \ \{ t \in \mathbb{T}^+ : \, \textrm{$\Theta^t(\omega,x)$ is $(\varepsilon,\delta)$-contained} \} \]
\noindent is unbounded.
\end{adefi}

\begin{adefi}
We will say that a pair $(\omega,x) \in \Omega \times X$ is \emph{recurrently Lyapunov stable} if for all $\varepsilon>0$ there exists $0<\delta \leq \varepsilon$ such that $(\omega,x)$ is $(\varepsilon,\delta)$-recurrent.
\end{adefi}

\noindent Note that, by Proposition~A5, if a pair $(\omega,x)$ is recurrently Lyapunov stable then indeed it is Lyapunov stable. Let $L' \subset \Omega \times X$ denote the set of recurrently Lyapunov stable pairs $(\omega,x)$.

\begin{aprop}
For any stationary probability measure $\rho$, $L \setminus L'$ is a $(\mathbb{P} \otimes \rho)$-null set.
\end{aprop}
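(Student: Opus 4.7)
The plan is to invoke the Poincar\'{e} recurrence theorem for the $(\Theta^t)$-invariant probability measure $\mu := \mathbb{P}|_{\mathcal{F}_\infty\!} \otimes \rho$ applied to each of the countably many measurable sets $L_{1/n,1/m}$ with $n \leq m$. By Proposition~1.3.2, $\mu$ is invariant under $\Theta^t$ for every $t \in \mathbb{T}^+$; by Lemma~A3 each $L_{1/n,1/m}$ is $(\mathcal{F}_\infty \otimes \mathcal{B}(X))$-measurable; so Poincar\'{e} recurrence applied to the $\mu$-preserving map $\Theta^1$ yields a $\mu$-null set $N_{n,m} \subset L_{1/n,1/m}$ such that every $(\omega,x) \in L_{1/n,1/m} \setminus N_{n,m}$ satisfies $\Theta^k(\omega,x) \in L_{1/n,1/m}$ for infinitely many integers $k \geq 1$.

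Setting $N := \bigcup_{n=1}^\infty \bigcup_{m=n}^\infty N_{n,m}$, which is still $\mu$-null, I will show $L \setminus N \subset L'$. Fix $(\omega,x) \in L \setminus N$ and $\varepsilon > 0$. Choose $n$ with $1/n \leq \varepsilon$; using the representation $L = \bigcap_{n'=1}^\infty \bigcup_{m'=n'}^\infty L_{1/n',1/m'}$ appearing in the proof of Proposition~A5, pick $m \geq n$ with $(\omega,x) \in L_{1/n,1/m}$. Since $(\omega,x) \nin N_{n,m}$, the set $\{k \in \mathbb{N} : \Theta^k(\omega,x) \in L_{1/n,1/m}\}$ is unbounded. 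A direct check of Definition~A2 shows $L_{1/n,1/m} \subset L_{\varepsilon,1/m}$ whenever $1/n \leq \varepsilon$, so with $\delta := 1/m \leq \varepsilon$ the set $R_{\varepsilon,\delta,\omega,x}$ contains arbitrarily large integers. Hence $(\omega,x)$ is $(\varepsilon,\delta)$-recurrent; since $\varepsilon$ was arbitrary, $(\omega,x) \in L'$.

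The only point that requires any thought is the quantifier order in Definition~A7: because $\delta$ is allowed to depend on $\varepsilon$, we are free to apply Poincar\'{e} recurrence separately to each countably-indexed pair $(1/n,1/m)$ rather than uniformly over all $\delta > 0$. This is precisely what keeps the exceptional set $N$ a countable union of null sets; an attempt to enforce a single $\delta$ for all $\varepsilon$ would instead require an uncountable union. Once this observation is in place, there is no further obstacle, and the argument reduces to a single appeal to Poincar\'{e} recurrence together with the trivial monotonicity of the family $L_{\varepsilon,\delta}$ in $\varepsilon$.
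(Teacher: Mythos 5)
Your proof is correct and takes essentially the same approach as the paper: decompose $L$ and $L'$ as $\bigcap_n \bigcup_{m\geq n} L_{1/n,1/m}$ and $\bigcap_n \bigcup_{m\geq n} L'_{1/n,1/m}$ respectively, reduce to the countable family of differences $L_{1/n,1/m} \setminus L'_{1/n,1/m}$, and kill each by Poincar\'{e} recurrence. The only cosmetic difference is that you package the null sets into $N$ up front and show $L \setminus N \subset L'$, whereas the paper states the equivalent containment $L \setminus L' \subset \bigcup_{n,m}\bigl(L_{1/n,1/m} \setminus L'_{1/n,1/m}\bigr)$ directly.
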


\begin{proof}
For any $\varepsilon>0$ and $0<\delta \leq \varepsilon$, let $L'_{\varepsilon,\delta}$ denote the set of all $(\varepsilon,\delta)$-recurrent pairs $(\omega,x)$. Then
\[ L \setminus L' \ = \ \left( \bigcap_{n=1}^\infty \, \bigcup_{m=n}^\infty L_{\frac{1}{n},\frac{1}{m}} \right) \setminus \left( \bigcap_{n=1}^\infty \, \bigcup_{m=n}^\infty L'_{\frac{1}{n},\frac{1}{m}} \right) \ \subset \ \bigcup_{n=1}^\infty \, \bigcup_{m=n}^\infty \left( L_{\frac{1}{n},\frac{1}{m}} \setminus L'_{\frac{1}{n},\frac{1}{m}} \right). \]
\noindent Now for any integers $m \geq n$, Lemma~A3 and the Poincar\'{e} recurrence theorem yield that $L_{\frac{1}{n},\frac{1}{m}} \setminus L'_{\frac{1}{n},\frac{1}{m}}$ is a $(\mathbb{P} \otimes \rho)$-null set. So we are done.
\end{proof}

\begin{armk}
It is clear that for a fixed point of an autonomous deterministic dynamical system, Lyapunov stability and recurrent Lyapunov stability are equivalent. Similarly, in the random context: if $\theta^t$ is measurably invertible for each $t$, then Proposition~A8 yields that for any past-measurable random fixed point $a:\Omega \to X$ (see Remark~2.3.2), $(\omega,a(\omega))$ is Lyapunov stable for $\mathbb{P}$-almost all $\omega$ if and only if $(\omega,a(\omega))$ is recurrently Lyapunov stable for $\mathbb{P}$-almost all $\omega$.
\end{armk}

\begin{adefi}
We will say that a pair $(\omega,x) \in \Omega \times X$ is (\emph{locally}) \emph{attractive in the na\"{i}ve sense} if there exists a neighbourhood $U$ of $x$ such that for all $y \in U$, $x$ and $y$ mutually converge under $\omega$.
\end{adefi}

\begin{athm}
(I) If a pair $(\omega,x)$ is both recurrently Lyapunov stable and attractive in the na\"{i}ve sense, then it is asymptotically stable. (II) If a pair $(\omega,x)$ is asymptotically stable, then it is both Lyapunov stable and attractive in the na\"{i}ve sense.
\end{athm}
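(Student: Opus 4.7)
\emph{Part~(II)} is immediate. Given a neighbourhood $U$ of $x$ with $\mathrm{diam}(\varphi(t,\omega)U) \to 0$, naïve attractivity follows from $d(\varphi(t,\omega)x,\varphi(t,\omega)y) \leq \mathrm{diam}(\varphi(t,\omega)U)$ for $y \in U$. For Lyapunov stability, given $\varepsilon>0$, choose $T$ with $\mathrm{diam}(\varphi(t,\omega)U) \leq \varepsilon$ for $t \geq T$; by Lemma~A1 choose $\delta_1>0$ so that $\varphi(s,\omega)B_{\delta_1}(x) \subset B_\varepsilon(\varphi(s,\omega)x)$ for $0 \leq s \leq T$; choose $\delta_2>0$ with $B_{\delta_2}(x) \subset U$; then $\delta := \min(\delta_1,\delta_2,\varepsilon)$ witnesses $(\varepsilon,\delta)$-containment---for $t \leq T$ by equicontinuity, and for $t \geq T$ by the diameter bound together with $B_\delta(x) \subset U$.

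\emph{Part~(I)} is the substantive direction. Letting $V$ be a neighbourhood witnessing naïve attractivity, I would choose an open neighbourhood $U$ of $x$ with $\overline{U} \subset V$ (which exists by the local compactness of the compact metric space $X$) and aim to show $\mathrm{diam}(\varphi(t,\omega)U) \to 0$. Fix $\varepsilon>0$, and by recurrent Lyapunov stability pick $\delta \leq \varepsilon$ such that $R := R_{\varepsilon,\delta,\omega,x}$ is unbounded. For each $y \in \overline{U} \subset V$, naïve attractivity gives $d(\varphi(t,\omega)x,\varphi(t,\omega)y) \to 0$, which combined with unboundedness of $R$ yields some $T_y \in R$ with $d(\varphi(T_y,\omega)x,\varphi(T_y,\omega)y) \leq \delta/2$. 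Continuity of $\varphi(T_y,\omega)$ at $y$ then produces an open neighbourhood $N_y$ of $y$ with $d(\varphi(T_y,\omega)y,\varphi(T_y,\omega)y') \leq \delta/2$ for all $y' \in N_y$, so by the triangle inequality $d(\varphi(T_y,\omega)x,\varphi(T_y,\omega)y') \leq \delta$. Applying the $(\varepsilon,\delta)$-containment of $\Theta^{T_y}(\omega,x)$ with $z := \varphi(T_y,\omega)y'$, and invoking the cocycle identity $\varphi(T_y+s,\omega) = \varphi(s,\theta^{T_y}\omega) \circ \varphi(T_y,\omega)$, gives
\[
d(\varphi(t,\omega)x,\varphi(t,\omega)y') \ \leq \ \varepsilon \qquad \text{for all } t \geq T_y \text{ and all } y' \in N_y.
\]
Compactness of $\overline{U}$ yields a finite subcover $N_{y_1},\ldots,N_{y_k}$; setting $T := \max_i T_{y_i}$, we obtain $\mathrm{diam}(\varphi(t,\omega)U) \leq 2\varepsilon$ for $t \geq T$. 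Since $\varepsilon$ was arbitrary, $U$ contracts under $\omega$.

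The main obstacle is bridging pointwise naïve convergence on $V$ to uniform convergence on the compact set $\overline{U}$: the convergence times produced by naïve attractivity alone could in principle grow unboundedly as $y$ varies, so one cannot hope for uniform convergence directly. Recurrent Lyapunov stability is exactly the lever required---it lets us ``lock in'' the $\varepsilon$-bound from the moment $T_y$ onwards, and it is essential that $T_y$ itself can be chosen from $R$ (hence the genuine need for the ``recurrent'' hypothesis, as opposed to bare Lyapunov stability). Continuity of $\varphi(T_y,\omega)$ then promotes the bound from the single point $y$ to a whole neighbourhood, and compactness of $\overline{U}$ reduces an a~priori uncountable requirement to a finite one.
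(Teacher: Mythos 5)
Your proof is correct. Part~(II) matches the paper's argument; one small detail: $(\varepsilon,\delta)$-containment is defined with the closed condition $d(x,y) \leq \delta$, so you should take $\delta$ strictly below $\delta_2$ (or choose $\delta_2$ so that $\overline{B_{\delta_2}(x)} \subset U$), exactly as the paper does by requiring $0<\delta<r$.

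Part~(I) rests on the same three ingredients as the paper's proof --- (a)~recurrent Lyapunov stability, giving a containment time in $R_{\varepsilon,\delta,\omega,x}$ beyond any prescribed point; (b)~continuity of a single time-$t$ map, to propagate the $\delta$-bound from a point to a neighbourhood; (c)~compactness of a closed neighbourhood of $x$ --- but is packaged differently. You argue directly: for each $y \in \overline{U}$ you pick $T_y \in R_{\varepsilon,\delta,\omega,x}$, use continuity of $\varphi(T_y,\omega)$ at $y$ to cover $\overline{U}$ by finitely many sets $N_{y_i}$, and take $T=\max_i T_{y_i}$ to get $\mathrm{diam}(\varphi(t,\omega)U)\leq 2\varepsilon$ for $t\geq T$. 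The paper instead argues by contradiction: negating asymptotic stability gives sequences $(x_n)$ in $B_\varepsilon(x)$ and unbounded $(t_n)$ with $d(\varphi(t_n,\omega)x_n,\varphi(t_n,\omega)x)>\varepsilon'$; sequential compactness extracts $x_{n_i}\to x_\infty$; na\"{i}ve attractivity at $x_\infty$ together with unboundedness of $R_{\varepsilon',\delta,\omega,x}$ produces a single $t^\ast$ at which the $\delta$-bound holds; continuity of $\varphi(t^\ast,\omega)$ at $x_\infty$ then propagates this to $x_{n_i}$ for large $i$, and the $(\varepsilon',\delta)$-containment of $\Theta^{t^\ast}(\omega,x)$ yields the contradiction. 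Your direct version is somewhat more transparent, since it explicitly exhibits the uniform contraction time $T$; the paper's contradiction version is a bit more economical, needing the continuity step at only one auxiliary point $x_\infty$ rather than a finite cover. Both are correct and equivalent in content.
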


\begin{proof}
(I) Suppose for a contradiction that $(\omega,x)$ is recurrently Lyapunov stable and attractive in the na\"{i}ve sense, but not asymptotically stable. Let $\varepsilon>0$ be such that for any $y \in \overline{B_\varepsilon(x)}$, $x$ and $y$ mutually converge under $\omega$. Since $(\omega,x)$ is not asymptotically stable, there exist a sequence $(x_n)_{n \in \mathbb{N}}$ in $B_\varepsilon(x)$, an unbounded sequence of times $(t_n)_{n \in \mathbb{N}}$ in $\mathbb{T}^+$, and a value $\varepsilon'>0$, such that $d(\varphi(t_n,\omega)x_n,\varphi(t_n,\omega)x)>\varepsilon'$ for all $n \in \mathbb{N}$. Let $(n_i)_{i \in \mathbb{N}}$ be an unbounded increasing sequence in $\mathbb{N}$ such that $x_{n_i}$ converges to a point $x_\infty$ as $i \to \infty$. Obviously $x_\infty \in \overline{B_\varepsilon(x)}$, and so $x_\infty$ and $x$ mutually converge under $\omega$. Let $0 < \delta \leq \varepsilon'$ be such that $R_{\varepsilon',\delta,\omega,x}$ is unbounded. So there exists $t^\ast \in R_{\varepsilon',\delta,\omega,x}$ such that $d(\varphi(t^\ast,\omega)x_\infty,\varphi(t^\ast,\omega)x)=:\delta' < \delta$. Let $N \in \mathbb{N}$ be such that for all $i \geq N$, $d(\varphi(t^\ast,\omega)x_{n_i},\varphi(t^\ast,\omega)x_\infty)<\delta-\delta'$; so $d(\varphi(t^\ast,\omega)x_{n_i},\varphi(t^\ast,\omega)x)<\delta$ for all $i \geq N$. Let $i^\ast \in \mathbb{N} \cap [N,\infty)$ be such that $t_{n_{i^\ast}}\!\!>t^\ast$. Since $\Theta^{t^\ast}(\omega,x)$ is $(\varepsilon',\delta)$-contained, it follows that $d(\varphi(t_{n_{i^\ast}},\omega)x_{n_{i^\ast}},\varphi(t_{n_{i^\ast}},\omega)x)<\varepsilon'$. But this contradicts the fact that $d(\varphi(t_n,\omega)x_n,\varphi(t_n,\omega)x)>\varepsilon'$ for all $n \in \mathbb{N}$.
\\ \\
(II) Suppose $(\omega,x)$ is asymptotically stable. Obviously, $(\omega,x)$ is attractive in the na\"{i}ve sense. Now let $r>0$ be such that $B_r(x)$ contracts under $\omega$, and fix any $\varepsilon>0$. Let $T \in \mathbb{T}^+$ be such that for all $t \geq T$, $\varphi(t,\omega)B_r(x) \subset B_\varepsilon(\varphi(t,\omega)x)$. On the basis of Lemma~A1, let $0<\delta<r$ be such that $\varphi(s,\omega)\overline{B_\delta(x)} \subset B_\varepsilon(\varphi(s,\omega)x)$ for all $s \in \mathbb{T}^+ \cap [0,T]$. Then $(\omega,x)$ is $(\varepsilon,\delta)$-contained.
\end{proof}

\begin{armk}
Theorem A11 does not fully rely on the compactness of $X$: Part~(I) relies on the \emph{local} compactness of $X$ (together with $\varphi$ being continuous in space). Part~(II) relies on $\varphi$ being c\`{a}dl\`{a}g (since it uses Lemma~A1), but does not rely on any properties of $X$ (beyond being a metric space).
\end{armk}

\begin{acor}
Let $\rho$ be a stationary probability measure. Then for $(\mathbb{P} \otimes \rho)$-almost every $(\omega,x) \in \Omega \times X$, $(\omega,x)$ is asymptotically stable if and only if it is both Lyapunov stable and attractive in the na\"{i}ve sense.
\end{acor}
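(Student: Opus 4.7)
The plan is to read Corollary A13 as a direct packaging of the two deterministic-flavoured results Theorem A11(I)--(II) together with the measure-theoretic bridge provided by Proposition A8.

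For the ``only if'' direction, no exceptional null set is needed at all: Theorem A11(II) already gives, pointwise in $(\omega,x)$, that asymptotic stability implies both Lyapunov stability and naïve local attractivity. So this implication holds on \emph{all} of $\Omega \times X$, not just almost surely.

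For the ``if'' direction, the obstacle is that Theorem A11(I) requires \emph{recurrent} Lyapunov stability, which is \emph{a priori} strictly stronger than Lyapunov stability. This is exactly where Proposition A8 is used: the set $L \setminus L'$ of Lyapunov stable but not recurrently Lyapunov stable pairs has $(\mathbb{P} \otimes \rho)$-measure zero. So I would define the exceptional null set $N := L \setminus L'$, and then for every $(\omega,x) \notin N$ observe that if $(\omega,x)$ is Lyapunov stable (so $(\omega,x) \in L$) then $(\omega,x) \in L'$, i.e.\ $(\omega,x)$ is recurrently Lyapunov stable; combined with naïve local attractivity, Theorem A11(I) then gives asymptotic stability.

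Putting the two directions together on the full-measure set $(\Omega \times X) \setminus N$ yields the biconditional claimed in Corollary A13. I do not expect any real difficulty in the argument: all the substantive content (the Poincaré recurrence step in the proof of Proposition A8, and the equicontinuity-based matching of $\delta$'s in the proofs of both parts of Theorem A11) has already been handled. The only thing to be mildly careful about is to state clearly that the null set is independent of which direction of the equivalence is being verified, which is automatic since the $\Rightarrow$ direction holds everywhere and the $\Leftarrow$ direction holds off a single fixed null set $N$ depending only on $(\varphi,\rho)$.
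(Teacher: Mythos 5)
Your argument is exactly the paper's: the paper simply asserts that the corollary ``follows immediately from Theorem~A11 and Proposition~A8,'' and your write-up is the correct unpacking of that—Theorem~A11(II) pointwise for the forward implication, and the exceptional null set $L \setminus L'$ from Proposition~A8 to upgrade Lyapunov stability to recurrent Lyapunov stability so that Theorem~A11(I) applies for the reverse. No gaps.
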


\begin{proof}
Follows immediately from Theorem~A11 and Proposition~A8.
\end{proof}

\subsection*{Acknowledgement}

The author would like to thank Dr Martin Rasmussen, Prof Jeroen Lamb and Prof~Thomas~Kaijser for their valuable comments and suggestions (as well as their warm encouragement).

\subsection*{References}

[Pik84] Pikovskii, A.~S., Synchronization and stochastization of array of self-excited oscillators by external noise, \emph{Radiophysics and Quantum Electronics} \textbf{27}(5), pp390--395.
\\ \\
\textrm{[NANTK05]} Nakao, H., Arai, K., Nagai, K., Tsubo, Y., Kuramoto, Y., Synchrony of limit-cycle oscillators induced by random external impulses, \emph{Physical Review E} \textbf{72}, 026220.
\\ \\
\textrm{[KFI12]} Kurebayashi, W., Fujiwara, K., Ikeguchi, T., Colored noise induces synchronization of limit cycle oscillators, \emph{Europhysics Letters} \textbf{97}(5), 50009.
\\ \\
\textrm{[LP13]} Lai, Y. M., Porter, M. A., Noise-induced synchronization, desynchronization, and clustering in globally coupled nonidentical oscillators, \emph{Physical Review E} \textbf{88}, 012905.
\\ \\
\textrm{[Arn98]} Arnold, L., \emph{Random Dynamical Systems}, Springer, Berlin Heidelberg New York.
\\ \\
\textrm{[App04]} Applebaum, D., \emph{L\'{e}vy Processes and Stochastic Calculus}, Cambridge Studies in Advanced Mathematics 116, Cambridge University Press, Cambridge~New~York.
\\ \\
\textrm{[BS88]} Baxendale, P. H., Stroock, D. W., Large Deviations and Stochastic Flows of Diffeomorphisms, Probability Theory and Related Fields \textbf{80}(2), pp169--215.
\\ \\
\textrm{[Bax91]} Baxendale, P. H., Statistical equilibrium and two-point motion for a stochastic flow of diffeomorphisms, \emph{Spatial Stochastic Processes} (Progress in Probability \textbf{19}), pp189--218.
\\ \\
\textrm{[KN04]} Kleptsyn, V.~A., Nalskii, N.~B., \emph{Contraction of orbits in random dynamical systems on the circle}, Functional Analysis and Its Applications \textbf{38}(4), pp267--282.
\\ \\
\textrm{[TMHP01]} Toral, R., Mirasso, C. R., Hernandes-Garcia, E., Piro, O., Analytical and numerical studies of noise-induced synchronization of chaotic systems, \\ arXiv:nlin/0104044v1 [nlin.CD].
\\ \\
\textrm{[MT83]} Matsumoto, K., Tsuda, I., Noise-Induced Order, \emph{Journal of Statistical Physics} \textbf{31}(1), pp87-106.
\\ \\
\textrm{[KS12]} Kuksin, S., Shirikyan, A., \emph{Mathematics of Two-Dimensional Turbulence}, Cambridge Tracts in Mathematics 194, Cambridge University Press, Cambridge~New~York.
\\ \\
\textrm{[LeJan87]} Le Jan, Y., \'{E}quilibre statistique pour les produits de diff\'{e}omorphismes al\'{e}atoires ind\'{e}pendants, \emph{Annales de l'Institut Henri Poincar\'{e} Probabilit\'{e}s et Statistiques} \textbf{23}(1), pp111--120.
\\ \\
\textrm{[Kai93]} Kaijser, T., On stochastic perturbations of iterations of circle maps, \emph{Physica~D} \textbf{68}(2), pp201--231.
\\ \\
\textrm{[MS99]} Mohammed, S.-E., Schuetzow, M., The stable manifold theorem for stochastic differential equations, \emph{The Annals of Probability} \textbf{27}(2), 615--652.
\\ \\
\textrm{[CS04]} Chueshov, I., Scheutzow, M., On the structure of attractors and invariant measures for a class of monotone random systems, \emph{Dynamical Systems} \textbf{19}(2), pp127--144.
\\ \\
\textrm{[Chuesh02]} Chueshov, I., \emph{Monotone Random Systems Theory and Applications}, Lecture Notes in Mathematics 1779, Springer, Berlin Heidelberg New York.
\\ \\
\textrm{[CCK07]} Caraballo, T., Chueshov, I., Kloeden, P., Synchronization of a stochastic reaction-diffusion system on a thin two-layer domain, \emph{SIAM Journal on Mathematical Analysis} \textbf{38}(5), pp1489--1507.
\\ \\
\textrm{[LDLK10]} Liu, X., Duan, J., Liu, J. and Kloeden, P., Synchronization of Dissipative Dynamical Systems Driven by Non-Gaussian L\'{e}vy Noises, \emph{International Journal of Stochastic Analysis}, vol.~2010, Article ID 502803, 13 pages.
\\ \\
\textrm{[Car85]} Carverhill, A., Flows of stochastic dynamical systems: ergodic theory, \emph{Stochastics} \textbf{14}(4), pp273--317.
\\ \\
\textrm{[BBD14]} Bochi, J., Bonatti, C., D\'{i}az, L.~J., Robust vanishing of all Lyapunov exponents for iterated function systems, \emph{Mathematische Zeitschrift} \textbf{276}(1--2), pp469--503.
\\ \\
\textrm{[CF98]} Crauel, H., Flandoli, F., Additive Noise Destroys a Pitchfork Bifurcation, \emph{Journal of Dynamics and Differential Equations} \textbf{10}(2), pp259--274.
\\ \\
\textrm{[Crau01]} Crauel, H., Random point attractors versus random set attractors, \emph{Journal of the London Mathematical Society} \textbf{63}(2), pp413--427.
\\ \\
\textrm{[Crau02]} Crauel, H., Invariant measures for random dynamical systems on the circle, \emph{Archiv der Mathematik} \textbf{78}(2), pp145-154.
\\ \\
\textrm{[CDLR13]} Callaway, M., Doan, T.~S., Lamb, J.~S.~W., Rasmussen, M., The dichotomy spectrum for random dynamical systems and pitchfork bifurcations with additive noise, arXiv:1310.6166v1 [math.DS].
\\ \\
\textrm{[Hom13]} Homburg, A.~J., Synchronization in iterated function systems, arXiv:1303.6054v1 [math.DS].
\\ \\
\textrm{[KH95]} Katok, A., Hasselblatt, B., \emph{Introduction to the Modern Theory of Dynamical Systems}, Cambridge University Press, Cambridge.
\\ \\
\textrm{[FM10]} Ferenczi, S., Monteil, T., Infinite words with uniform frequencies, and invariant measures, \emph{Combinatorics, Automata and Number Theory} (ed. V.~Berth\'{e}, M.~Rigo), Encyclopedia of Mathematics and its Applications 135, Cambridge University Press, Cambridge.
\\ \\
\textrm{[New15]} Newman, J., Ergodic Theory for Semigroups of Markov Kernels (11th February 2015), \url{http://wwwf.imperial.ac.uk/~jmn07/Ergodic_Theory_for_Semigroups_of_Markov_Kernels.pdf}
\\ \\
\textrm{[AO03]} Ashwin, P., Ochs, G., Convergence to local random attractors, \emph{Dynamical Systems} \textbf{18}(2), pp139--158.
\\ \\
\textrm{[Dud02]} Dudley, R.~M., \emph{Real Analysis and Probability}, Cambridge Studies in Advanced Mathematics 74, Cambridge University Press, Cambridge.
\\ \\
\textrm{[Kif86]} Kifer, Y., \emph{Ergodic Theory of Random Transformations}, Birkh\"{a}user, Boston.

\end{document}